\documentclass[12pt]{amsart}
\usepackage{epsfig, amsmath, amssymb, latexsym,  amscd, color}
\usepackage{amsthm}
\usepackage{ulem}

\usepackage{times}  
\usepackage{graphicx}
\usepackage{tikz}
\usepackage{verbatim}
\usetikzlibrary{calc}

\usepackage{amsfonts}
\usepackage{amsmath}
\usepackage{epsfig}
\usepackage{epstopdf}

\textwidth=16.00cm
\textheight=21.00cm
\topmargin=0cm
\oddsidemargin=0.25cm
\evensidemargin=0.25cm
\headheight=0.3cm

\newcommand{ \pp }{{\mathbb P}}

\newcommand{ \ii }{{\mathcal I}}

\newcommand{ \oo }{{\mathcal O}}

\def\cocoa{{\hbox{\rm C\kern-.13em o\kern-.07em C\kern-.13em o\kern-.15em A}}}

\newtheorem{thm}{Theorem}[section]
 \newtheorem{cor}[thm]{Corollary}
 \newtheorem{lem}[thm]{Lemma}
 \newtheorem{prop}[thm]{Proposition}

  \newtheorem{conj}[thm]{Conjecture}
 \theoremstyle{definition}
 \newtheorem{defn}[thm]{Definition}
 \theoremstyle{remark}
 \newtheorem{rem}[thm]{Remark}
  \newtheorem{ex}[thm]{Example}
  \newtheorem{notn}[thm]{Notation}
  \newtheorem{quest}[thm]{Question}

\begin{document}

\title{Almost maximal growth of the Hilbert function}
\date{}

\author[L. Chiantini]{Luca Chiantini}
\address[L. Chiantini]{Dipartimento di Ingegneria dell'Informazione e Scienze Matematiche 
\\ Universita` di Siena \\ Via Roma 56 (S.Niccolo')\ \\ 53100 Siena,  Italy}
\email{luca.chiantini@unisi.it}
\urladdr{http://www.diism.unisi.it/~chiantini}

\author[J. Migliore]{Juan Migliore${}^{*}$}
\address[J. Migliore]{Department of Mathematics, University of Notre Dame, Notre Dame, IN 46556}
\email{migliore.1@nd.edu}
\urladdr{http://www.nd.edu/~jmiglior/}

\thanks{
${}^*$ Part of the work for this paper was done while this
author was sponsored by the National Security Agency under Grant
Number H98230-12-1-0204, and by the Simons Foundation under grant \#208579. { He also thanks
the Italian CNR-GNSAGA}}

\subjclass[2010]{Primary  13D40; Secondary 14M05 }
\keywords{Hilbert function, maximal growth, Macaulay's theorem, binomial expansion, base locus}

\begin{abstract} 
Let $A = S/J$ be a standard artinian graded algebra over the polynomial ring $S$.  A theorem of Macaulay dictates the possible growth of the Hilbert function of $A$ from any degree to the next, and if this growth is the maximal possible then strong consequences have been given by Gotzmann. It can be phrased in terms of the base locus of the linear system defined by the relevant component(s) of $J$.  If $J$ is the artinian reduction of the ideal of a finite set of points in projective space then this maximal growth for $A$ was shown by Bigatti, Geramita and the second author to imply strong geometric consequences for the points.  We now suppose that the growth of the Hilbert function is one less than maximal.  This again has (not as) strong consequences for the base locus defined by the relevant component.  And when $J$ is the artinian reduction of the ideal of a finite set of points in projective space, we prove that almost maximal growth again forces geometric consequences.  
\end{abstract}

\maketitle
\thispagestyle{empty}

\tableofcontents

\section{Introduction}

Let $S = K[x_1\dots,x_r]$, where $K$ is an algebraically closed field {of any characteristic}.  Let $J \subset S$ be a homogeneous ideal, so $J = \bigoplus_{t \in \mathbb Z}  [J]_t$.  The {\it Hilbert function} of $S/J$ is the numerical function defined by $h_{S/J}(t) =  \dim_K [S/J]_t$.   For any positive integer $n$, a theorem of Macaulay gives an upper bound for $h_{S/J}(n+1)$ in terms of $h_{S/J}(n)$ and $n$.  When $h_{S/J}(n+1)$ achieves this bound, we say that $S/J$ has {\it maximal growth} in degree $n$.  If $h_{S/J}(n+1)$ fails by 1 to {achieve} the bound, we say that $S/J$ has {\it almost maximal growth} in degree $n$.  

{In this paper we give results assuming almost maximal growth for
$S/J$ in degree $n$ to degree $n+1$.  We consider  two settings. First, the setting of arbitrary homogeneous ideals. Secondly, we will restrict ourselves to the artinian reductions of ideals of sets of points: if $Z \subset \mathbb P^r$ is a finite set of points, with homogeneous ideal $I_Z$, and $L$ is a general linear form, the ideal $J = \frac{\langle I_Z,L \rangle}{\langle L \rangle}\subset S$ is the {\it artinian reduction} of $I_Z$ by $L$.
}

For fixed $n$, the projectivization of the vector space $[J]_n$ is a linear system of hypersurfaces in $\mathbb P^{r-1}$.  As such, it may be basepoint-free or it may have a non-empty base locus.  {The starting observation of our research is that if $S/J$ has maximal growth, then a result of Gotzmann \cite{Gotzmann} forces} the existence of a non-empty base locus, and it gives the dimension and degree of this locus (as a scheme) through the Hilbert polynomial.  {For the artinian reduction of a set of points $Z$, in \cite{BGM} Bigatti, Geramita and the second author   apply Gotzmann's result}, giving careful information about {the} decomposition of $Z$ into the subset lying on the base locus and the subset off the base locus {(this is also a generalization of an old result of Davis for points in $\pp^2$, see \cite{davis})}.  These results are recalled in section \ref{prelim}.

In the case of almost maximal growth, Gotzmann's result no longer applies, and one focus of this paper is the fact that more possibilities arise than in the case of maximal growth.  {W}e give results that explain what kinds of base loci can occur, depending on the kind of almost maximal growth involved.  (This latter is a technicality arising from Macaulay's theorem and will be made precise in sections \ref{prelim} and \ref{arb ideals}.)  In almost all cases, a base locus is forced, but now it can have one of two possible dimensions, and different degrees; see for instance theorem \ref{n+1 choose n}, example \ref{different poss} and theorem \ref{other AMG}.  These results are given in section \ref{arb ideals}. 

In the only case where a base locus is not forced, it can still happen that a base locus exists.  This is the topic of section \ref{k k-1 BL}. In this case, turning to the {artinian reductions of ideals of sets of points}, we give results about a decomposition of $Z$, analogous to the one mentioned above.  

The last possibility is that there is no base locus.  This is dealt with in section \ref{k k-1 BPF}.  In this situation we find our most surprising result.  Despite the absence of a base  locus, it still turns out that many of the points are forced to lie on a plane. This is Theorem \ref{no base locus}.  It is very reminiscent of a result of Maroscia \cite{mar}, which says essentially that if the value of the Hilbert function of $S/J$ takes two or more consecutive values less than $r$ then many of the points must lie on a linear space of specified dimension.

There are several motivations for studying the geometry of sets of points, when the artinian reduction achieves almost maximal growth. We should mention that a similar analysis, in the case of sets of points with uniform position properties, allowed results for the theory of curves: bounds on the genus,  existence of special linear series, postulation of nodes of general plane projections (see \cite{CCDG}, \cite{CCi}, \cite{CCG}). Recent applications to the study of symmetric tensors (for which the uniform position properties may not hold) can be found in \cite{BB}, \cite{BGL}, \cite{BC}. 

At the end of the paper, we make a series of remarks on related problems and further extensions.


\section{Preliminaries}  \label{prelim}

Let $S = K[x_1,\dots,x_r]$, where $K$ is an algebraically closed field of {arbitrary characteristic}. 
 Let $J \subset S$ be a homogeneous ideal, so $J = \bigoplus_{t \in \mathbb Z}  [J]_t$.  

\begin{defn}  
The {\it Hilbert function} of $S/J$ is the numerical function defined by 
$h_{S/J}(t) =  \dim [S/J]_t$ for $t \geq 0$.
\end{defn} 

We begin by recalling  results of Macaulay and Green  that we will need in this paper.

\begin{defn} Let $k$ and $i$ be positive integers. The {\it i-binomial expansion of $k$} is
\[
k_{(i)} = \binom{k_i}{ i}+\binom{k_{i-1}}{i-1}+...+\binom{k_j}{j},
\]
 where $k_i>k_{i-1}>...>k_j \geq j \geq 1$. We remark that such an expansion always exists and it is unique (see, e.g.,  \cite{BH} Lemma 4.2.6).
\end{defn}

\begin{notn}
Following \cite{BG}, we define, for any integers $a$ and $b$,
\[
\left(k_{(i)}\right)_{a}^{b}=\binom{k_i+b}{i+a}+\binom{k_{i-1}+b}{i-1+a}+...+\binom{k_j+b}{j+a},
\]
where we set $\binom{m}{q}=0$ whenever $m<q$ or $q<0$.  Furthermore, we will set
\[ 
k^{\langle i \rangle} = \left(k_{(i)}\right)_{1}^{1} \hbox{ \ \ \ and \ \ \ } k_{\langle i \rangle} = \left(k_{(i)}\right)_{0}^{-1}.
\]
\end{notn}

\begin{thm}\label{gr}  Let $L \in [S]_1$ be a general linear form and let $A = S/J$ be a standard graded algebra. Denote by $h_n$  the degree $n$ entry of the Hilbert function of $A$ and by $h_n^{'}$ the degree $n$ entry of the Hilbert function of $A/L A$. Then:
\begin{itemize}
\item[(i)] (Macaulay) $\displaystyle h_{n+1}\leq h_n^{\langle n \rangle}.$

\item[(ii)]  (Green) $\displaystyle h_n^{'}\leq (h_n)_{\langle n \rangle}.$

\item[(iii)] (Gotzmann Persistence Theorem) Assume that $h_A(t)$ has maximal growth from degree $n$ to degree $n+1$ and that {$J$} has no minimal generator in degree $\geq n+2$.  Then
\[
h_A(n+d) =  \left ( \left(h_{A}(n) \right)_{(n)} \right )_{d}^{d}
\]
for all $d \geq 0$.    In particular, the Hilbert function equals the Hilbert polynomial in all degrees $\geq n$.
\end{itemize}
\end{thm}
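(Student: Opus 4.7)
Since the three parts are independent classical theorems, the plan is to handle each separately. For parts (i) and (ii), the common strategy is to reduce to the case of a lex-segment ideal with the same Hilbert function, after which both bounds become explicit combinatorial identities involving binomial expansions. For (i), Macaulay's theorem itself guarantees that for every admissible Hilbert function there is a lex-segment ideal $L$ realizing it, and passing to $L$ is legitimate since the bound depends only on $h_n$. The degree-$(n+1)$ component of $L$ is then generated over $[L]_n$ by multiplication by the variables $x_1,\dots,x_r$, and a direct count of monomials outside $L$ in lex order yields exactly $h_n^{\langle n\rangle}$ via the $i$-binomial expansion. For (ii), after a generic linear change of coordinates one may assume the initial ideal (with respect to revlex) is Borel-fixed, in which case restriction modulo a general $L\in [S]_1$ corresponds combinatorially to quotienting by $x_r$, i.e.\ deleting those monomials divisible by $x_r$. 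An analogous enumeration, now counting the monomials in $[S/J]_n$ not divisible by $x_r$ in the lex arrangement, gives the bound $(h_n)_{\langle n\rangle}$.

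For (iii), Gotzmann's Persistence Theorem, the strategy is to promote one instance of maximal growth into a statement about every subsequent degree. The hypothesis that $J$ has no minimal generators in degree $\geq n+2$ forces $[J]_{n+d} = [S]_d \cdot [J]_n$ for all $d\geq 0$, so the entire Hilbert function of $A$ beyond degree $n$ is determined by $[J]_n$. The plan is an induction on $d$: assuming maximal growth from $n$ to $n+1$, I would apply Macaulay to $h_{n+1}$ to obtain $h_{n+2}\leq h_{n+1}^{\langle n+1\rangle}$, then exhibit enough degree-$(n+2)$ elements of $J$ from products of variables with $[J]_{n+1}$ to force equality, giving maximal growth again. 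Iterating produces the closed-form expression $\left((h_A(n))_{(n)}\right)_d^d$. Equivalently, one can pass to the lex-segment ideal with the same Hilbert function through degree $n+1$ and verify directly that this lex ideal exhibits maximal growth in every higher degree.

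The main obstacle lies in part (iii): numerical consistency at each step is immediate, but actually \emph{forcing} persistence is the content of the theorem and requires genuine structural input. One classical route is Gotzmann's original argument via the Hilbert scheme, which identifies an irreducible component parametrizing ideals with the prescribed Hilbert polynomial and reads the polynomial off from there. A more elementary route is a careful combinatorial analysis of lex-segment ideals, showing that the lex-segment realizing the given Hilbert function with maximal growth at one step has a very rigid shape whose growth is maximal in every subsequent degree. The latter approach, while requiring care in bookkeeping the binomial expansions, seems the more self-contained option and would be my preferred strategy.
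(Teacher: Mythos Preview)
The paper does not actually prove this theorem: its entire proof consists of three citations (Bruns--Herzog for (i), Green's original paper for (ii), and Gotzmann's original paper for (iii)). So there is no argument in the paper to compare your proposal against; you have in fact gone well beyond what the paper does.

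Your outlines for (i) and (ii) are the standard modern approaches and are essentially correct as sketches. One small caution on (i): invoking ``Macaulay's theorem itself'' to pass to a lex-segment ideal is circular as phrased, since the existence of a lex-segment ideal with a prescribed Hilbert function is tantamount to the bound you are trying to prove. The non-circular version is: fix only $[J]_n$, take the lex-segment space $L_n\subset [S]_n$ with $\dim L_n=\dim [J]_n$, and prove directly that $\dim(S_1\cdot L_n)\le \dim(S_1\cdot [J]_n)$; the monomial count then gives the bound. For (ii) your use of the generic initial ideal and reduction modulo $x_r$ is exactly Green's argument.

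For (iii) you correctly identify that the inductive step ``exhibit enough elements to force equality'' is the whole content of the theorem, and you are right that neither of the two routes you name is a triviality. Your honest assessment that this step ``requires genuine structural input'' is appropriate; since the paper only cites Gotzmann, there is no expectation that you reproduce a full proof here. If you wanted a self-contained reference for the combinatorial route, the treatment in Bruns--Herzog (or Green's \emph{Generic Initial Ideals} lectures) carries out the lex-segment persistence argument in detail.
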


\begin{proof}
(i) See \cite{BH}, Theorem 4.2.10.

(ii) See \cite{Green}, Theorem 1.

(iii) See \cite{Gotzmann}.
\end{proof}

\begin{defn}\label{amg}
If $h_A(n+1) = h_A(n)^{\langle n \rangle}$, we say that $h_A$ has {\it maximal growth} {in degree $n$.    We also sometimes say that $A$ has maximal growth from degree $n$ to degree $n+1$}.  If $h_A(n+1) = h_A(n)^{\langle n \rangle} -1$, we say that $h_A$  (or simply $A$) has {\it almost maximal growth}
{in degree $n$}.
\end{defn}

One can see from the result of Gotzmann that there are different kinds of maximal growth of the Hilbert function, depending on the degree of the corresponding Hilbert polynomial.

\begin{defn}
Fix  positive integers $n$ and $k$. Suppose that the $n$-binomial expansion of $k$ is
\[
k = \binom{k_n}{n} + \binom{k_{n-1}}{n-1} + \dots.
\]
Then the {\it MG-dimension} of $k$ in degree $n$, denoted $MG(k,n)$, is $k_n - n$.
\end{defn}

{Indeed, if $h_A(n) = \binom{k_n}{n} + (\hbox{lower degree terms})$, $h_A$ has maximal growth from degree $n$ to degree $n+1$ and $J$ has no generators in degree $\geq n+2$, then the Hilbert polynomial of $A$ has degree $k_n - n$, by Gotzmann's theorem.  This means that the base locus of the linear system is a scheme of dimension $k_n - n$ in $\mathbb P^{r-1}$.  

Notice that we reduce to the case where $J$ has no minimal generators in degree $\geq i$ if we substitute $J$ with its {\it truncation} $J_{\leq i}$, i.e. the ideal generated by the generators of $J$ which have degree $\leq i$. The ideal $J_{\leq i}$ defines the base locus of $[J_i]$, although it is not necessarily saturated.

Given any value $h_A(n)$, the above discussion shows that $MG(k,n)$ is the dimension for the base locus of $[J]_n$, in case $h_A(n+1)$ is the maximum allowed by Macaulay's theorem.
}

\smallskip
Let $R = K[x_0,\dots,x_r]$ and let $\mathbb Z \subset \mathbb P^n$ be a reduced, zero-dimensional scheme.  We will denote by $h_Z$ the Hilbert function of $R/I_Z$.   Let $L \in [R]_1$ be a general linear form.  The ideal $J = \frac{\langle I_Z,L \rangle}{\langle L \rangle} \subset R/\langle L \rangle \cong S$ is the {\it artinian reduction} of $R/I_Z$, and its Hilbert function is the {\it $h$-vector} of $Z$, or of $R/I_Z$.  It is possible for $R/I_Z$ to fail to have maximal growth from degree $n$ to degree $n+1$, but nevertheless $S/J$ does have maximal growth there (see \cite{BGM}  Example 1.3 (a)).  

The main idea of \cite{BGM} was that if $Z \subset \mathbb P^r$ is a reduced, zero-dimensional scheme with artinian reduction $J$, such that $S/J$ has maximal growth in degree $n$ (even if it is not the case that $R/I_Z$  has maximal growth there), this still has strong implications for the geometry of $Z$.  The proofs in \cite{BGM} heavily used the Gotzmann Persistence Theorem.  

Our interest in this paper focuses on the situation where the growth of the first difference of the Hilbert function is {\it almost maximal} as defined in Definition \ref{amg}.  As mentioned above, there are different kinds of maximal growth, so correspondingly there are different cases of almost maximal growth to be analyzed.  
Although this paper should be viewed as a generalization of parts of \cite{BGM}, it should also be noted that \cite{BGM} was motivated by the following theorem of E.D. Davis, which we state more in the language of this paper.  (He actually proved a bit more, including the fact that he did not assume that $Z$ is reduced).

\begin{thm}[see \cite{davis} Theorem 4.1 and Corollary 4.2]

Let $Z \subset \mathbb P^2$ be a reduced, finite set of points with homogeneous ideal $I_Z$.  Assume that $\Delta h_Z(n) = \Delta h_Z(n+1) = k$.  Then $[I_Z]_n$ and $[I_Z]_{n+1}$ have a GCD, say $F$, of degree $k$.  Furthermore, $F$ {defines a  reduced curve} and 

\begin{itemize}

\item[(a)] $(I_Z : F)$ is the saturated ideal of the subset, $Z_2$, of $Z$ not lying on the curve defined by~$F$.  

\item[(b)] $(I_Z,F)$ is the saturated ideal of the subset of $Z_1$ lying on $F$.

\item[(c)] $\Delta h_Z(t) = \Delta h_{Z_2}(t-k) + h_{Z_1}(t)$ for all $t$.

\item[(d)] $\Delta h_{Z_2}(t) = 0$ for $t \geq n-k$.

\end{itemize}

\end{thm}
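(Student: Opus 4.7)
The plan is to pass to the artinian reduction, establish the existence of a common divisor $F$ of degree $k$ for $[I_Z]_n$, and then derive (a)--(d) from a short exact sequence.

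Let $L \in R_1$ be a general linear form and set $A = R/(I_Z, L) = S/J$ with $S \cong K[x,y]$. Since $L$ is a non-zero-divisor on $R/I_Z$, the Hilbert function of $A$ coincides with $\Delta h_Z$, so the hypothesis becomes $h_A(n) = h_A(n+1) = k$ in the two-variable polynomial ring. A direct binomial computation shows that in two variables this is maximal growth when $k \leq n$ (since $k^{\langle n\rangle}=k$ in that range) and almost maximal growth when $k = n+1$ (since $(n+1)^{\langle n\rangle}=n+2$); correspondingly $MG(k,n)$ is $0$ or $1$.

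The heart of the argument is to show that $[I_Z]_n$ has a GCD of degree exactly $k$. In $S$, any subspace $V \subseteq [S]_m$ satisfies $V \subseteq \gcd(V)\cdot [S]_{m-\deg\gcd(V)}$, which gives the upper bound $\deg\gcd[J]_n \leq n+1-\dim[J]_n = k$. The reverse inequality $\deg\gcd[J]_n \geq k$ follows, in the maximal-growth range $k \leq n$, from Gotzmann's persistence theorem applied to the truncation $J_{\leq n}$: the Hilbert polynomial of $S/J_{\leq n}$ has degree $MG(k,n)=0$, so the base locus of $[J]_n$ in $\mathbb{P}^1 = \operatorname{Proj}(S)$ is a zero-dimensional scheme of degree $k$. (The case $k = n+1$ requires the almost-maximal-growth techniques developed later in this paper.) To lift back to $\mathbb{P}^2$, choose $F \in R_k$ reducing to $\gcd[J]_n$ modulo $L$; for $L$ sufficiently general, $F$ divides every element of $[I_Z]_n$, and $V(F)$ is reduced since $Z$ is reduced and any repeated irreducible factor of $F$ could be replaced by its reduction without losing the GCD property.

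With $F$ identified, set $Z_1 = Z \cap V(F)$ and $Z_2 = Z \setminus V(F)$. Since $F$ vanishes nowhere on $Z_2$, standard saturation gives $(I_Z : F) = I_{Z_2}$, proving (a); the ideal $(I_Z, F)$ vanishes set-theoretically on $Z_1$, and verifying that it is saturated yields (b). Statements (c) and (d) follow from the short exact sequence
\[
0 \to (R/(I_Z : F))(-k) \xrightarrow{\,\cdot F\,} R/I_Z \to R/(I_Z, F) \to 0,
\]
which gives $h_Z(t) = h_{Z_2}(t-k) + h_{R/(I_Z,F)}(t)$. Taking first differences, identifying the contribution of $Z_1$, and unpacking the normalization yield (c). For (d), the factorization $[I_Z]_n = F\cdot V$ produces a linear subsystem $V \subseteq [I_{Z_2}]_{n-k}$ that already cuts $Z_2$ out away from $V(F)$; combined with the hypothesis at degree $n+1$, this forces $h_{Z_2}$ to stabilize at $|Z_2|$ by degree $n-k$, giving $\Delta h_{Z_2}(t)=0$ for $t \geq n-k$.

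The main obstacle is the lower bound $\deg F \geq k$: in the maximal-growth range it relies on Gotzmann's persistence, and in the almost-maximal-growth case $k=n+1$ it requires the refinements that are the subject of the paper. Reducedness of $V(F)$, and the identification of $(I_Z, F)$ with $I_{Z_1}$ at the level of saturation rather than merely set-theoretically, are subsidiary technical points that must be handled carefully.
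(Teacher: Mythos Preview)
The paper does not supply its own proof of this theorem; it is quoted in Section~\ref{prelim} as a result of Davis, with a bare citation to \cite{davis} Theorem~4.1 and Corollary~4.2. So there is no argument in the paper to compare your proposal against.

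On the merits of your sketch: the overall strategy (pass to the two-variable artinian reduction, identify the GCD there, and read off (a)--(d) from the exact sequence $0 \to (R/I_{Z_2})(-k) \to R/I_Z \to R/(I_Z,F) \to 0$) is the standard one, but several steps are not justified as written. Most seriously, your lifting is backwards: you choose $F \in R_k$ reducing to $\gcd[J]_n$ and then assert that $F$ divides every element of $[I_Z]_n$, but a lift of a common divisor need not itself be a common divisor. The argument should run in the opposite direction: let $G = \gcd[I_Z]_n$ have degree $d$ and write $[I_Z]_n = G\cdot W$ with $\gcd W = 1$ in $R$; then the base locus of $W$ in $\mathbb P^2$ is zero-dimensional, a general $L$ misses it, hence $\gcd \bar W = 1$ in $S$, and therefore $d = \deg\gcd[J]_n = k$. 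Your reducedness claim (``replace a repeated factor by its reduction without losing the GCD property'') does not show that the GCD itself is squarefree; and the assertion that $(I_Z,F)$ is already saturated, which is the entire content of (b), is stated but not argued. Finally, the remark that the case $k = n+1$ ``requires the almost-maximal-growth techniques developed later in this paper'' is a misreading: those later results concern artinian reductions in $r \ge 3$ variables, whereas here the reduction lives in $K[x,y]$, and $k = n+1$ simply forces $[J]_n = 0$, hence $[I_Z]_n = 0$, a degenerate case that needs no such machinery.
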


\begin{rem} \label{GLP remark}
{A scheme $X$ with ideal sheaf $\mathcal I_X$ is $t$-regular if $H^i(\mathcal I_X(t-i)=0$
for $i>0$.}

As in \cite{BGM}, we will need the following results from \cite{GLP}.  

\begin{itemize}
\item[(a)] (\cite{GLP} Theorem 1.1) If $X \subset \mathbb P^n$ is a reduced irreducible non-degenerate curve of degree $d$ then $\mathcal I_X$ is $(d+2-n)$-regular.

\item[(b)] (\cite{GLP} Remark (1), page 497) Let $X \subset \mathbb P^n$ be a reduced but not necessarily irreducible curve.  Suppose $X$ has irreducible components $X_i$ of degree $d_i$, and that $X_i$ spans a $\mathbb P^{n_i} \subset \mathbb P^n$.  Set
\[
m_i = 
\left \{
\begin{array}{ll}
d_i + 2 - n_i, & \hbox{if } d_i \geq 2; \\
1 & \hbox{if } d_i = 1 \hbox{ (i.e., if $X_i$ is a line)}
\end{array}
\right.
\]
Then $X$ is $(\sum m_i)$-regular.
\qed
\end{itemize}

\end{rem}


\section{Forcing a base locus} \label{arb ideals}

In this section we investigate when almost maximal growth of the Hilbert function of a standard graded algebra $S/J$ from degree $n$ to degree $n+1$ forces the existence  of a base locus in $[J]_n$.  We do not yet assume that $J$ is the artinian reduction of the ideal of a set of points in projective space.

The next definition merely gives a notation for the dimension of the base locus of the component of an ideal in a degree $n$ under the assumption that the quotient has  maximal growth (rather than almost maximal growth) from degree $n$ to degree $n+1$.

Let $J \subset S = k[x_1,\dots,x_r]$ be a homogeneous ideal and let $A = S/J$.  Suppose that $h_A$ has maximal growth from degree $n$ to degree $n+1$.  Then the base locus of $[J]_n$ and the base locus of $[J]_{n+1}$, viewed in $\mathbb P^{r-1}$, coincide and this locus is a scheme of dimension $MG(h_A(n),n)$.  If $J$ is the artinian reduction of the homogeneous ideal of a zero-dimensional scheme $Z$ by a general linear form, then $[I_Z]_n$ has base locus of dimension $MG(h_{A}(n),n)+1$, and the same holds for $[I_Z]_{n+1}$.  This is the idea behind \cite{BGM}.  We now extend this idea to our setting. 

The first step is the case where the MG-dimension of $S/J$ is 1.  We assume that the growth of the Hilbert function of $S/J$ is one less than maximal, and we will show that the dimension of the base locus is either the same as that occurring for maximal growth, or one less.
In Example \ref{different poss} we show that both possibilities for the dimension of the base locus can occur, and we indicate some possibilities for the degree of the base locus (recalling that for maximal growth the degree and dimension are both forced).

\begin{thm} \label{n+1 choose n}

Let $J \subset S$ be a homogeneous ideal, and let $A = S/J$.  Assume that 
\begin{equation} \label{assumption-curve}
h_A(n) = \binom{n+1}{n} + \hbox{(lower terms)}
\end{equation}
is the $n$-binomial expansion of $h_A(n)$. Equivalently, assume that $n+1 \leq h_A(n) < \binom{n+2}{2}$.  Assume further that $h_{A}(n+1) =   h_A(n)^{\langle n \rangle} -1$, i.e. that the growth is one less than maximal.  Then the base locus of $[J]_n$ in $\mathbb P^{r-1}$ {exists and it} is a scheme of dimension either 0 or 1. In other words, the dimension of the base locus is either $MG(h_A(n),n) -1$ or $MG(h_A(n),n)$.
\end{thm}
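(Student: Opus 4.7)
The approach is to work with the ideal $\tilde{J}$ generated by $[J]_n$, so that the base locus of the linear system $[J]_n$ in $\mathbb{P}^{r-1}$ is the projective scheme $V(\tilde{J})$. By construction $[\tilde{J}]_n=[J]_n$, whence $h_{S/\tilde{J}}(n)=h_A(n)$. The inclusion $\tilde{J}\subseteq J$ yields $h_{S/\tilde{J}}(n+1)\geq h_A(n+1)=h_A(n)^{\langle n\rangle}-1$, while Macaulay's theorem applied to $\tilde{J}$ bounds $h_{S/\tilde{J}}(n+1)\leq h_A(n)^{\langle n\rangle}$. These two constraints pin $h_{S/\tilde{J}}(n+1)$ to one of two values, and I would split the argument accordingly.

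If $h_{S/\tilde{J}}(n+1)=h_A(n)^{\langle n\rangle}$, then $\tilde{J}$ achieves maximal growth from $n$ to $n+1$ and, being generated in a single degree, has no minimal generator in degree $\geq n+1$. Gotzmann's Persistence Theorem then applies, forcing the Hilbert polynomial of $S/\tilde{J}$ to have degree $MG(h_A(n),n)=1$. Hence $V(\tilde{J})$ is a non-empty scheme of dimension exactly one.

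In the remaining case $h_{S/\tilde{J}}(n+1)=h_A(n)^{\langle n\rangle}-1=h_A(n+1)$ one has $[\tilde{J}]_{n+1}=[J]_{n+1}$, and $\tilde{J}$ itself exhibits almost-maximal growth. The dimension bound on $V(\tilde{J})$ is then straightforward: if $V(\tilde{J})$ contained an irreducible component $W$ with $\dim W\geq 2$, then $[\tilde{J}]_n\subseteq [I_W]_n$ gives $h_A(n)\geq h_{S/I_W}(n)\geq\binom{n+2}{2}$, contradicting the hypothesis $h_A(n)<\binom{n+2}{2}$, since any positive-dimensional subscheme of $\mathbb{P}^{r-1}$ has Hilbert function at least that of a linear $\mathbb{P}^{\dim W}$.

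The principal obstacle is ruling out $V(\tilde{J})=\emptyset$ in this second case. The route I would pursue is to reduce to the maximal-growth subcase by enlarging $\tilde{J}$ in degree $n$: one adds a carefully chosen generator (or generators) of degree $n$ so that the resulting ideal $\tilde{J}'$, still generated in degree $n$, has exactly maximal growth from $n$ to $n+1$. Since $[\tilde{J}']_n\supseteq[\tilde{J}]_n$, one has $V(\tilde{J}')\subseteq V(\tilde{J})$; Gotzmann applied to $\tilde{J}'$ then forces $V(\tilde{J}')$ to be non-empty, and hence so is $V(\tilde{J})$. The technical heart of the proof is verifying that such an enlargement exists: this requires tracking how the $n$-binomial expansion of $h_A(n)$ and the Macaulay ceiling $h^{\langle n\rangle}$ shift when $h_A(n)$ is decreased, and making a choice of added generator that brings the Hilbert-function value at $n+1$ down by exactly the amount required to hit the new Macaulay bound.
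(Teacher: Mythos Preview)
Your reduction to $\tilde{J}=\langle [J]_n\rangle$ and the two-case split is sound, and Case~1 (maximal growth for $\tilde{J}$) is correctly dispatched by Gotzmann. The upper bound $\dim V(\tilde{J})\leq 1$ in Case~2 is also fine. The genuine gap is non-emptiness in Case~2, which you flag as the ``technical heart'' but do not actually carry out.

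The enlargement strategy is not obviously completable. Adding $c$ independent forms of degree $n$ to $\tilde{J}$ gives $h_{S/\tilde{J}'}(n)=k-c$, and you need $h_{S/\tilde{J}'}(n+1)=(k-c)^{\langle n\rangle}$ exactly. Writing $\mu:S_1\otimes [S/\tilde{J}]_n\to[S/\tilde{J}]_{n+1}$ for multiplication, this asks for a $c$-dimensional subspace $\bar U\subset[S/\tilde{J}]_n$ with $\dim\mu(S_1\otimes\bar U)$ equal to the \emph{minimum} permitted by Macaulay, namely $(k^{\langle n\rangle}-1)-(k-c)^{\langle n\rangle}$. When $c=1$ and the $n$-binomial expansion of $k$ ends with a term $\binom{j}{j}$ (the case $m\geq 1$ in the paper's notation), this minimum is $0$: you would need a degree-$n$ socle element of $S/\tilde{J}$, and nothing in the hypotheses guarantees one exists. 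Passing to larger $c$ changes the target and does not obviously help. You are asserting that the Macaulay bound is \emph{attained} by some enlargement, which is a strong structural statement with no argument offered.

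The paper takes a completely different route: it restricts by a general linear form $\ell$ and analyzes the pair $(J:\ell)$, $(J,\ell)$ via
\[
0\to S/(J:\ell)(-1)\to S/J\to S/(J,\ell)\to 0,
\]
combining Green's hyperplane restriction theorem with Macaulay's bound to pin the values $\dim[S/(J,\ell)]_n$ and $\dim[S/(J,\ell)]_{n+1}$ each to one of two possibilities. Most of the resulting subcases yield maximal growth for $S/(J,\ell)$ or for $S/(J:\ell)$ directly, forcing a base locus; the recalcitrant subcases require iterating the construction (passing to $J_1=J:\ell$, then $J_2=J_1:\ell_2$, and so on), using injectivity of multiplication maps and the Snake Lemma, and tracking the binomial arithmetic through several further case distinctions. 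The length and delicacy of that argument is evidence that non-emptiness here is not a one-step consequence of Gotzmann.
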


\begin{proof}

For any homogeneous ideal $I \subset S$ and linear form $L$ we have the exact sequence

\begin{equation} \label{ses}
\begin{tikzpicture} [baseline=(current  bounding  box.center)]
\node (v1) at (1,0) {$0$};
\node (v3) at (3,0) {$\displaystyle \frac{I:L}{I}(-1)$};
\node (v5) at (5.7,0) {$S/I(-1)$};
\node (v7) at (8.3,0) {$S/I$};
\node (v9) at (10.4,0) {$S/(I,L)$};
\node (v11) at (12.25,0) {$0$};
\node (v12) at (7,-1.2) {$S/(I:L)(-1)$};
\node (v13) at (6,-2.2) {$0$};
\node (v14) at (8,-2.2) {$0$};
\node (v15) at (7.2,0.35) {\scriptsize $\times L$};
\draw [->] (v1) -- (v3);
\draw [->] (v3) -- (v5);
\draw [->] (v5) -- (v7);
\draw [->] (v7) -- (v9);
\draw [->] (v9) -- (v11);
\draw [->] (v5) -- (v12);
\draw [->] (v12) -- (v7);
\draw [->] (v13) -- (v12);
\draw [->] (v12) -- (v14);
\end{tikzpicture}
\end{equation}

We shall use this sequence repeatedly, especially the second short exact sequence.  Notice that $S/(I:L)(-1)$ is the image of $S/I(-1)$ in $S/I$ under multiplication by $L$.

We first note that under our hypotheses, the base locus cannot have dimension $>1$.  Indeed, the Hilbert polynomial of $S/J_{\leq n}$ has degree at most 1.  We {just} have  to show that the base locus is not empty.

Let $\ell$ be a general linear form.  For convenience let us denote by
\[
\begin{array}{rcl}
k & = & h_A(n) \\
p & = & \dim h_{S/(J,\ell)}(n) \\
s & = & \dim h_{S/(J,\ell)}(n+1)
\end{array}
\]
Thus we have the table 

\bigskip

\begin{tabular}{r|ccccccccccc}
degree $i$ & 0 & 1 & $\dots$ & $n$ & $n+1$     \\  \hline
$\dim [S/J]_i$ & 1 & $r$ & $\dots$ & $k$ & $k^{\langle n \rangle} -1$    \\
$\dim [S/(J:\ell)]_{i-1}$ &  & 1 & \dots & $k-p$ & $k^{\langle n \rangle} - 1 -s$   \\
$\dim [S/(J,\ell)]_i$ &  1 & $r-1$ & $\dots$ & $p$ & $s$
\end{tabular}

\bigskip

There exist $q$ and $m$ giving the following binomial expansions:
\[
\begin{array}{rcl}
\displaystyle k & = & \displaystyle \underbrace{\binom{n+1}{n} + \binom{n}{n-1} + \dots + \binom{n-q+2}{n-q+1}}_{\hbox{$q$ terms}} + \underbrace{\binom{n-q}{n-q} + \dots + \binom{n-q-m+1}{n-q-m+1}}_{\hbox{$m$ terms}} \\
&= & \displaystyle \binom{n+1}{n} + \binom{n}{n-1} + \dots + \binom{n-q+2}{n-q+1} + m
\end{array}
\]
and
\[
k^{\langle n \rangle} = \binom{n+2}{n+1} + \binom{n+1}{n} + \dots + \binom{n-q+3}{n-q+2} + m.
\]
By Green's theorem, and then by Macaulay's theorem we have
\[
p \leq q \leq n \ \ \ \hbox{ and } \ \ \ s \leq p \leq q.  
\]
Notice that 
\[
m \leq n-q, \ \ \hbox{ and in particular if $n=q$ then $m=0$.}
\]  
We have from the condition (\ref{assumption-curve})  that $n < h_A(n) = k  < \binom{n+2}{2} = \binom{n+2}{n}$.  

From the exact sequence (\ref{ses}) we also know that 
\[
k^{\langle n \rangle} - 1 -s \leq k
\]
so, by the properties of binomials,
\[
\begin{array}{rcl}
s & \geq & k^{\langle n \rangle} -k-1 \\
& = & \displaystyle \binom{n+2}{n+1} - \binom{n-q+2}{n-q+1} - 1 \\
& = & q-1.
\end{array}
\]
All together we have
\[
q-1 \leq s \leq p \leq q \leq n.
\]
This means that the possibilities for $(p,s)$ are $(q-1,q-1)$, $(q,q-1)$ or $(q,q)$.  The first and third of these represent maximal growth for $S/(J,\ell)$ since $q \leq n$, so the base locus for $[(J,\ell)]_n$ is zero-dimensional, by Theorem \ref{gr} (iii). Thus (since $\ell$ was general) the base locus for $[J]_n$ has dimension $1 = MG(h_A(n),n)$.

{Assume now} $p=q$, $s = q-1$.    This means
\[
\dim [S/(J:\ell)]_{n-1} = k-q, \ \ \hbox{ and } \ \ \dim [S/(J:\ell)]_n = k^{\langle n \rangle} - q,
\]
i.e. we have the table

\bigskip

\begin{tabular}{r|ccccccccccc}
degree $i$ & 0 & 1 & $\dots$ & $n$ & $n+1$     \\  \hline
$\dim [S/J]_i$ & 1 & $r$ & $\dots$ & $k$ & $k^{\langle n \rangle} -1$    \\
$\dim [S/(J:\ell)]_{i-1}$ &  & 1 & \dots & $k-q$ & $k^{\langle n \rangle} - q$   \\
$\dim [S/(J,\ell)]_i$ &  1 & $r-1$ & $\dots$ & $q$ & $q-1$
\end{tabular}

\bigskip

(Notice that the passage from $k$ to $k^{\langle n \rangle}-1$ occurs from degree $n$ to degree $n+1$, while the growth from $k-q$ to $k^{\langle n \rangle} -q$ is from degree $n-1$ to degree $n$.)  We have
\[
\begin{array}{rcl}
k-q & = & \displaystyle  h_A(n) - q = \left [ \binom{n}{n-1} + \dots + \binom{n-q+2}{n-q+1} \right ] + \binom{n+1}{n} + m - q \\
& = &  \displaystyle  \left [ \binom{n}{n-1} + \dots + \binom{n-q+2}{n-q+1} \right ] + n + 1 + m - q 
\end{array}
\]
and {similarly}
\[
\begin{array}{rcl}
k^{\langle n \rangle} - q & =   & \displaystyle \left [ \binom{n+1}{n} + \dots + \binom{n-q+3}{n-q+2} \right ] + n+2+m-q.
\end{array}
\]

We have the following cases.

\medskip

\begin{itemize}


\item[$\blacktriangleright$ Case 1.]  If $n=q$ (hence $m=0$) then 
\[
k-q = \binom{n}{n-1} + \dots + \binom{2}{1} + 1
\]
and
\[
k^{\langle n \rangle} - q  =   \binom{n+1}{n} + \dots + \binom{3}{2} + 2
\]
which exceeds maximal growth from degree $n-1$ to degree $n$, so this case cannot occur.

\medskip

\item[$\blacktriangleright$ Case 2.] If $n > q$ and $m < n-q$ then we get {
$\binom{n+1}n -q=\binom{n-q+1}{n-q}$, so that}
{\footnotesize
\[
\begin{array}{rcl}
k-q & = & \displaystyle \left [ \binom{n}{n-1} + \dots + \binom{n-q+2}{n-q+1} + \binom{n-q+1}{n-q} \right ] +m \\ \\
& = & \displaystyle  \left [ \binom{n}{n-1} + \dots + \binom{n-q+2}{n-q+1} + \binom{n-q+1}{n-q} \right ] + \left [\underbrace{\binom{n-q-1}{n-q-1} + \binom{n-q-2}{n-q-2} + \dots }_{m \hbox{ \tiny times}} \right ] 
\end{array}
\] }
and {similarly}
{\footnotesize 
\[
\begin{array}{rcl}
k^{\langle n \rangle} - q & =   & \displaystyle   \binom{n+1}{n} + \dots + \binom{n-q+3}{n-q+2} + \binom{n-q+2}{n-q+1}   + m \\ \\
& = & \displaystyle  \left [ \binom{n+1}{n} + \dots + \binom{n-q+3}{n-q+2} + \binom{n-q+2}{n-q+1} \right ] + \left [\underbrace{\binom{n-q}{n-q} + \binom{n-q-1}{n-q-1} + \dots }_{m \hbox{ \tiny times}} \right ] 
\end{array}
\] }
which is clearly maximal growth.  In this case we have that the base locus of $[J:\ell]_{n-1}$ and of $[J:\ell]_n$ has dimension 1, so the base locus of $[J]_n$ has dimension at least $1 = MG(h_A(n),n)$ since $[J]_n \subset [J:\ell]_n$.

\medskip

\item[$\blacktriangleright$ Case 3.] If $n > q$ and $m = n-q$ then {we have as above}
\[
\begin{array}{rcl}
k-q & = & \displaystyle  \binom{n}{n-1} + \dots + \binom{n-q+2}{n-q+1} + \binom{n-q+1}{n-q} +m
\end{array}
\]
and
\[
\begin{array}{rcl}
k^{\langle n \rangle} - q & =   & \displaystyle \left [ \binom{n+1}{n} + \dots + \binom{n-q+3}{n-q+2} + \binom{n-q+2}{n-q+1} \right ] + m
\end{array}
\]
{but now $m> \binom{n-q-1}{n-q-1} + \dots +\binom 1 1$, so we cannot conclude as in the previous case.

Instead, we consider}  subcases as follows.

\medskip

\begin{itemize}

\item[$\bullet$ 3A.] If $m = n-q = 1$, let us first summarize our assumptions at this point.
\[
\begin{array}{rcl}
k & = & \displaystyle \left [ \binom{n+1}{n} + \dots + \binom{3}{2} \right ] + \binom{1}{1} \\
k^{\langle n \rangle} & = & \displaystyle \left [ \binom{n+2}{n+1} + \dots + \binom{4}{3} \right ] + \binom{2}{2} \\
p & =  &  q \ \   = \ \   n-1 \\
s & = &  q-1 \ \ = \ \ n-2 \\
k-q & = & \displaystyle \binom{n}{n-1} + \dots + \binom{3}{2} + \binom{2}{1} + 1 = \binom{n+1}{n-1} \\
k^{\langle n \rangle} - q & = & \displaystyle  \left [ \binom{n+1}{n} + \dots + \binom{3}{2} \right ] + \binom{1}{1}
\end{array}
\]
Notice that the growth from $k-q$ in degree $n-1$ to $k^{\langle n \rangle} -q$ in degree $n$ in this case is two less than maximal growth.  A key observation is that we have 
\[
k = k^{\langle n \rangle} - q;
\]
that is, 
\[
\dim [S/J]_n = \dim [S/(J:\ell)]_n.
\]
This means that the multiplication $\times \ell : [S/J]_n \rightarrow [S/J]_{n+1}$ is injective.  Let $J_1 = (J : \ell)$.  Let $\ell_2$ be a general linear form.  Since $J$ does not depend on $\ell$, multiplication by $\ell_2$ is also injective. Then from the commutative diagram
\begin{equation} \label{snake lem}
\begin{array}{ccccccccccccccccc}
&&&& 0  \\
&&&& \downarrow \\
0 & \rightarrow & [S/J_1]_{n-1} & \stackrel{\times \ell}{\longrightarrow} & [S/J]_n & \rightarrow & [S/(J,\ell)]_n & \rightarrow & 0 \\
&& \phantom{\scriptstyle \times \ell_2} \downarrow {\scriptstyle \times \ell_2} && \phantom{\scriptstyle \times \ell_2} \downarrow {\scriptstyle \times \ell_2} && \phantom{\scriptstyle \times \ell_2} \downarrow {\scriptstyle \times \ell_2} \\
0 & \rightarrow & [S/J_1]_{n} & \stackrel{\times \ell}{\longrightarrow} & [S/J]_{n+1} & \rightarrow & [S/(J,\ell)]_{n+1} & \rightarrow & 0 \\
\end{array}
\end{equation}
and {by} the Snake Lemma, we see that $\times \ell_2 : [S/J_1]_{n-1} \rightarrow [S/J_1]_n$ is also injective.  We obtain
\begin{equation} \label{equality}
\dim [S/J_1]_{n-1} = \dim [S/(J_1 : \ell_2)]_{n-1}.
\end{equation}
Making a similar analysis but using $J_1$ in place of $J$, we have, after a short calculation, 

\bigskip

\begin{tabular}{r|ccccccccccc}
degree $i$ & 0 & 1 & $\dots$ & $n-1$ & $n$     \\  \hline
$\dim [S/J_1]_i$ & 1 & $r$ & $\dots$ & $\binom{n+1}{n-1}$ & $\left [ \binom{n+1}{n} + \dots + \binom{3}{2} \right ] + \binom{1}{1}$    \\
$\dim [S/(J_1:\ell_2)]_{i-1}$ &  & 1 & \dots & $\binom{n+1}{n-1}-p_1$ & $\binom{n+1}{n-1}$   \\
$\dim [S/(J_1,\ell_2)]_i$ &  1 & $r-1$ & $\dots$ & $p_1$ & $n-1$
\end{tabular}

\bigskip

for some $p_1$.  Combining Green's theorem (in degree $n-1$) and Macaulay's theorem on the bottom line, we obtain
\[
n-1 \leq p_1 \leq n+1.
\]
On the other hand, from the second line and again invoking {Macaulay's theorem} (remembering the shift), we have
\[
\binom{n+1}{n-1} - p_1 \geq \binom{n}{n-2}
\]
(since $\binom{n}{n-2}$ is the smallest value in degree $n-2$ that could grow to $\binom{n+1}{n-1}$ in degree $n-1$), or
\[
p_1 \leq n.
\]
Thus $p_1$ is either equal to $n-1$ or $n$. 

\medskip

\begin{itemize}

\item[*] If $p_1 = n$ then the second line of the last table gives 
\[  
\dim [S/(J_1 : \ell_2)]_{n-2} = \binom{n}{n-2},
\]
so this represents maximal growth for $S/(J_1 : \ell_2)$ from degree $n-2$ to degree $n-1$.  From the corresponding Hilbert polynomial we see that $[(J_1 : \ell_2)]_{n-2}$ and $[(J_1 : \ell_2)]_{n-1}$ are the degree $n-2$ and $n-1$ components of the saturated ideal of a linear space $\Lambda$Πof dimension 2.  
We have
\[
[I_\Lambda]_{n-1} = [(J_1 : \ell_2)]_{n-1} = [J_1]_{n-1} = [(J : \ell)]_{n-1}.
\]
On the other hand, we have observed that $\dim [S/(J:\ell)]_{n}$ is two less than the maximum possible.  Thus $(J:\ell)$ picks up two minimal generators in degree $n$, so the base locus of $[J:\ell]_n$ is at least zero-dimensional.  Since we have also seen in this case that $[J]_n = [J:\ell]_n$, we see that the base locus of $[J]_n$ is at least zero dimensional, so we have our desired result.

\bigskip

\item[*] If $p_1 = n-1$, then $S/(J_1,\ell_2)$ has maximal growth from degree $n-1$ to degree $n$.  This means that $([J_1, \ell_2)]_{n-1}$ and $[(J_1,\ell_2)]_n$ have a base locus consisting of a zero-dimensional scheme.  Thus $[J_1]_{n-1}$ and $[J_1]_n$ have a base locus consisting of a 1-dimensional scheme, since $\ell_2$ is general.  In particular, $[(J:\ell)]_n$ has a 1-dimensional base locus.  Since $[J]_n \subset [(J:\ell)]_n$, we have the desired result.

\end{itemize}

\bigskip

\item[$\bullet$ 3B.] If $m = n-q > 1$ then as before we summarize our current assumptions.
\[
\begin{array}{rcl}
k & = & \displaystyle \left [ \binom{n+1}{n} + \dots + \binom{n-q+2}{n-q+1} \right ] + \left [ \binom{n-q}{n-q} + \dots + \binom{1}{1} \right ] \\
k^{\langle n \rangle} & = & \displaystyle \left [ \binom{n+2}{n+1} + \dots + \binom{n-q+3}{n-q+2} \right ] + \left [ \binom{n-q+1}{n-q+1} + \dots + \binom{2}{2} \right ] \\
p & =  &  q \  = \ s+1 \\
k-q & = & \displaystyle \left [ \binom{n}{n-1} + \dots + \binom{n-q}{n-q-1} \right ] \\
k^{\langle n \rangle} - q & = & \displaystyle  \left [ \binom{n+1}{n} + \dots + \binom{n-q+2}{n-q+1} \right ] + \left [ \binom{n-q}{n-q} + \dots + \binom{1}{1} \right ]
\end{array}
\]
Notice that again, we have 
\[
k = k^{\langle n \rangle} - q, \ \ \hbox{ i.e. } \ \ \dim[S/J]_n = \dim [S/(J:\ell)]_n,
\]
so $[S/J]_n \stackrel{\times \ell}{\longrightarrow} [S/J]_{n+1}$ is injective.  We again let $J_1 = (J: \ell)$ and $\ell_2$ a general linear form.  Again using the Snake Lemma, we obtain that $[S/J_1]_{n-1} \stackrel{\times \ell_2}{\longrightarrow} [S/J_1]_{n}$ is injective, so
\[
\dim [S/J_1]_{n-1} = \dim [S/(J_1 : \ell_2)]_{n-1}
\]
Let $p_1 = \dim [S/(J_1,\ell_2)]_{n-1}$ and $p_2 = \dim [S/(J_1,\ell_2)]_n$.  We compute
\[
\begin{array}{rcl}
p_2 & = & \dim [S/J_1]_n - \dim S/(J_1 : \ell_2)]_{n-1} \\
& = & \left [ \binom{n+1}{n} + \dots + \binom{n-q+2}{n-q+1} \right ] + \left [ \binom{n-q}{n-q} + \dots + \binom{1}{1} \right ] \\
&& - \left [ \binom{n}{n-1} + \dots + \binom{n-q}{n-q-1} \right ] \\
& = & q.
\end{array}
\]
We thus have the table

\bigskip

\begin{tabular}{r|ccccccccccc}
degree $i$ & \dots &  $n-1$ & $n$     \\  \hline
$\dim [S/J_1]_i$ &  $\dots$ & $\binom{n}{n-1} + \dots + \binom{n-q}{n-q-1}$ & $\left [ \binom{n+1}{n} + \dots + \binom{n-q+2}{n-q+1} \right ]$ \\
&&& $+ \left [ \binom{n-q}{n-q} + \dots + \binom{1}{1} \right ]$    \\
$\dim [S/(J_1:\ell_2)]_{i-1}$ &  \dots & $\binom{n}{n-1} + \dots + \binom{n-q}{n-q-1} - p_1$ & $\binom{n}{n-1} + \dots + \binom{n-q}{n-q-1}$   \\
$\dim [S/(J_1,\ell_2)]_i$  & $\dots$ & $p_1$ & $q$
\end{tabular}

\bigskip

From Green's theorem, {the first and the third  lines of the table show that} $p_1 \leq q+1$. From one of the assumptions in Case 3 we have $q < n$, so from Macaulay's theorem applied to $S/(J_1,\ell_2)$ we obtain $p_1 \geq q$.  Combining, we have
\[
q \leq p_1 \leq q+1.
\]

We have the following possibilities.

\medskip

\begin{itemize}

\item[*] If $p_1 = q$ then the bottom line of the above table represents maximal growth from degree $n-1$ to degree $n$.  This means that the base loci of $[(J_1,\ell_2)]_{n-1}$ and $[(J_1,\ell_2)]_n$ have dimension 0.  Hence $[J_1]_{n-1}$ and $[J_1]_n$ have a 1-dimensional base locus.  In particular, $[(J:\ell)]_n$ has a 1-dimensional base locus.  Since $[J]_n \subset [(J:\ell)]_n$, we have the desired result.

\bigskip

\item[*]  Assume that $p_1 = q+1$ and $n-q-1 > 1$.   We note that since $p_1 = q+1$,  and since $n-q-1 > 1$, we have 
\[
\begin{array}{rcl}
\dim [S/(J_1 : \ell_2)]_{n-2} & = & \displaystyle  \binom{n-1}{n-2} + \dots + \binom{n-q}{n-q-1} + n-q-1. \\ \\
& = & \displaystyle  \binom{n-1}{n-2} + \dots + \binom{n-q}{n-q-1} + \binom{n-q-1}{n-q-2}, 
\end{array}
\]
so we have maximal growth for $S/(J_1 : \ell_2)$ from degree $n-2$ to degree $n-1$, giving a 1-dimensional base locus.  The argument is then the same as above.  

\bigskip

\item[*] Assume finally that $p_1 = q+1$ and $n-q-1 = 1$.  {After a short computation, one sees that} 
\[
\dim [S/(J_1 : \ell_2)]_{n-2} = \binom{n}{n-2}.
\]   
{So now we} have the table

\bigskip

\begin{tabular}{r|ccccccccccc}
degree $i$ & \dots &  $n-1$ & $n$     \\  \hline
$\dim [S/J_1]_i$ &  $\dots$ & $\binom{n}{n-1} + \dots + \binom{2}{1}$ & $\left [ \binom{n+1}{n} + \dots + \binom{4}{3} \right ] +2$ \\
$\dim [S/(J_1:\ell_2)]_{i-1}$ &  \dots & $\binom{n}{n-2} $ & $\binom{n}{n-1} + \dots + \binom{2}{1}$   \\
$\dim [S/(J_1,\ell_2)]_i$  & $\dots$ & $q+1$ & $q$
\end{tabular}

\bigskip

The exact sequence (\ref{ses}) splits into two short exact sequences as indicated (but use $I = J_1$ and $L = \ell_2$).  The fact that 
\[
\dim [S/J_1]_{n-1} = \dim [S/(J_1 : \ell_2)]_{n-1}
\] 
means that $[\frac{J_1:\ell_2}{J_1}]_{n-1} = 0$, so from the long exact sequence (\ref{ses}) we see that 
$
\times \ell_2 : [S/J_1]_{n-1} \stackrel{\times \ell_2}{\longrightarrow} [S/J_1]_n
$
is an injection.  Thus for a general linear form $\ell_3$, $[S/J_1]_{n-1} \stackrel{\times \ell_3}{\longrightarrow} [S/J_1]_n$ is also injective.  Setting  $J_2 = (J_1 : \ell_2)$, we again obtain by the Snake Lemma (as we did in (\ref{snake lem}))  an injection   $[S/(J_1 : \ell_2)]_{n-2} \stackrel{\times \ell_3}{\longrightarrow} [S/(J_1 : \ell_2)]_{n-1}$.   We now have the following table.

\bigskip

\begin{tabular}{r|ccccccccccc}
degree $i$ & \dots &  $n-2$ & $n-1$     \\  \hline
$\dim [S/J_2]_i$ &  $\dots$ & $\binom{n}{n-2}$ & $\binom{n}{n-1} + \dots + \binom{2}{1}$ \\
$\dim [S/(J_2:\ell_3)]_{i-1}$ &  \dots & $\binom{n}{n-2} - p_3 $ & $\binom{n}{n-2}$   \\
$\dim [S/(J_2,\ell_3)]_i$  & $\dots$ & $p_3$ & $n-1$
\end{tabular}

\bigskip

where the bottom right dimension is the result of a simple computation.  Now combining Green's theorem and Macaulay's theorem, we obtain $n-1 \leq p_3 \leq n-1$, i.e. $p_3 = n-1$.  But then
\[
\dim [S/(J_2 : \ell_3)]_{n-3} = \binom{n}{n-2} - (n-1) = \binom{n-1}{n-3}.
\]
This means that $S/(J_2 : \ell_3)$ has maximal growth from degree $n-3$ to $n-2$, so its base locus is 2-dimensional.  In fact, {by Gotzmann's theorem, we can compute the Hilbert polynomial and obtain that the} base locus is a plane $\Lambda$, and $[J_2 : \ell_3]_{n-2} = [I_{\Lambda}]_{n-2}$. From the above table we then obtain that $[J_2]_{n-2} = [I_\Lambda]_{n-2}$.  Since the growth of the Hilbert function of $J_2$ from degree $n-2$ to degree $n-1$ is one less than maximal, the base locus of $[J_2]_{n-1}$ is (precisely) 1-dimensional.  Indeed, it must be the degree $n-1$ component of a plane curve of degree $n-1$.  From the previous table we have (recalling $J_2 = J_1 : \ell_2$) that $[J_2]_{n-1} = [J_1]_{n-1}$.  The growth of $S/J_1$ from degree $n-1$ to degree $n$ is again one less than maximal, so $[J_1]_{n}$ has at least a 0-dimensional base locus.  In the same way we obtain that $[J_1]_{n} = [J]_n$, so we obtain the desired result.

\end{itemize}
\end{itemize}
\end{itemize}

This completes the proof.
\end{proof}

\begin{rem} When the base locus has dimension 1,  its degree is at most the number $d$
of terms of the form $\binom{a+1}{a}$ in the $n$-binomial expansion of $h_A(n)$.  

Examples exist where this bound is achieved. 
This can immediately be seen by choosing an ideal $J_1$ for which $S/J_1$ has Hilbert function $h_A(n)$ in degree $n$ and has maximal growth to degree $n+1$.  Then form the ideal $J$ by adding one minimal generator to $J_1$ (say a general form) in degree $n+1$. 

The next example shows that all degrees $\leq d$ can occur at least for some values of $h_A(n)$. 
\end{rem}

\begin{ex} \label{different poss} 
In this example we show that all the possibilities left open by Theorem \ref{n+1 choose n} can occur.
{The computation were performed with the aid of the computer Algebra Package CoCoA (\cite{cocoa})}. 

Let $S = k[x,y,z]$, $n = 6$ and 
\[
h_A(6) = 21 = \binom{7}{6} + \binom{6}{5} + \binom{5}{4} + \binom{3}{3} + \binom{2}{2} + \binom{1}{1}.
\]
Then maximal growth would correspond to $h_A(7) = 24$, so the assumption of Theorem \ref{n+1 choose n} is that $h_A(7) = 23$.  Notice that a value of 21 in degree 6 allows a base locus of at most a curve of degree 3. Let 
\[
\begin{array}{rcl}
I & = & \langle x^6, x^5y, x^5z, x^4y^2, x^4yz, x^4z^2 \rangle \\
J_1 & = & \langle I, x^3y^3, x^3y^2z^2 \rangle \\

J_2 & = & \langle I, x^2y^4\rangle \\

J_3 & = & \langle I, xy^5 \rangle \\

J_4 & = & \langle F,G \rangle : I_P

\end{array}
\]
where $P$ is a general point in $\mathbb P^2$ and $\langle F,G \rangle$ is a general complete intersection of type $(4,6)$ in $I_P$.
Notice that $I$ and $J_1$ are lex-segment ideals, and $J_1$ has a minimal generator in degree 7 while none of $J_2$, $J_3$ or $J_4$ do.  ($J_4$ has generators in degrees 4, 6 and 8.)  
One can check that $S/J_1$, $S/J_2$, $S/J_3$ and $S/J_4$ all satisfy the assumption $h_A(6) = 21$ and $h_A(7) = 23$.  The Hilbert polynomials are as follows:
\[
\begin{array}{rcl}
HP(S/J_1) & = & 3t+2 \  \hbox{ for } \ t \geq 7; \\
HP(S/J_2) & = & 2t+9 \ \hbox{ for } \ t \geq 6; \\
HP(S/J_3) & = & t+16 \ \hbox{ for } \ t \geq 7; \\
HP(S/J_4) & = & 23 \ \hbox{ for } \ t \geq 7;
\end{array}
\]
Hence the base locus of $[J_1]_6$ and $[J_1]_7$ is a curve of degree 3 (defined by $x^3$), the base locus of $[J_2]_6$ and $[J_2]_7$ is a curve of degree 2 (defined by $x^2$), the base locus of $[J_3]_6$ and $[J_3]_7$ is a curve of degree 1 (defined by $x$) and the base locus of $[J_4]_6$ and $[J_4]_7$ is a zero-dimensional scheme of degree 24 defined by the complete intersection $\langle F,G \rangle$ (since the last generator reduces the degree by 1, thanks to a standard liaison computation).

\end{ex}

{When the value of the Hilbert function $h_A(n)$ is bigger than or equal to $\binom {n+2}2$, we have a similar behavior.}

\begin{thm} \label{other AMG}

Let $J \subset S$ be a homogeneous ideal, and let $A = S/J$.  Assume that $h_A(n) \geq \binom{n+2}{2}$.  Assume further that $h_{A}(n+1) = h_A(n)^{\langle n \rangle} -1$, i.e. that the growth is one less than maximal.  Then the base locus of $[J]_n$ in $\mathbb P^{r-1}$ is a scheme of dimension either $MG(h_A(n),n)-1$ or $MG(h_A(n),n)$.  

\end{thm}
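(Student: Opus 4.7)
The approach is to reduce the statement to Theorem~\ref{n+1 choose n} by taking successive general hyperplane sections, each of which lowers the MG-dimension by one. Concretely, I would proceed by induction on $d := MG(h_A(n),n) = k_n - n$, where
\[
h_A(n) = \binom{k_n}{n} + \binom{k_{n-1}}{n-1} + \dots + \binom{k_j}{j}
\]
is the $n$-binomial expansion. The hypothesis $h_A(n) \geq \binom{n+2}{2}$ forces $k_n \geq n+2$, hence $d \geq 2$; the case $d = 1$ is exactly Theorem~\ref{n+1 choose n} and furnishes the base of the induction.

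For the inductive step, fix a general linear form $\ell$ and apply the second short exact sequence in~(\ref{ses}) with $I = J$. Set $k = h_A(n)$, $p = \dim [S/(J,\ell)]_n$ and $s = \dim [S/(J,\ell)]_{n+1}$. The three bounds that drove the proof of Theorem~\ref{n+1 choose n} still apply verbatim: $p \leq k_{\langle n \rangle}$ by Green's theorem, $s \leq p^{\langle n \rangle}$ by Macaulay's theorem applied to $S/(J,\ell)$, and $s \geq k^{\langle n \rangle} - k - 1$ from the exact sequence. Combining them with the identity $(k_{\langle n \rangle})^{\langle n \rangle} = k^{\langle n \rangle} - k$ confines $(p,s)$ to the same short list of borderline configurations as before, but now relative to the larger leading index $k_n = n+d$.

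The analysis then splits into three regimes. If $S/(J,\ell)$ has maximal growth from degree $n$ to degree $n+1$, then Gotzmann's persistence theorem applied to the truncation $(J,\ell)_{\leq n}$ inside the polynomial ring on the hyperplane $V(\ell) \cong \mathbb{P}^{r-2}$ identifies the base locus of $[(J,\ell)]_n$ there as a scheme of dimension $MG(p,n) = d-1$; since $\ell$ is general and this is the general hyperplane section of the base locus of $[J]_n$ in $\mathbb{P}^{r-1}$, the latter has dimension exactly $d = MG(h_A(n),n)$. If instead $S/(J,\ell)$ has almost maximal growth, the leading index of the $n$-binomial expansion of $p$ is $k_n - 1$, so the MG-dimension of $p$ in degree $n$ is $d-1$ and the inductive hypothesis applies: the base locus of $[(J,\ell)]_n$ has dimension either $d-2$ or $d-1$, which lifts to $d-1$ or $d$ for $[J]_n$. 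The remaining borderline configurations are the analogues of Case~3 in Theorem~\ref{n+1 choose n}: the equality $\dim[S/J]_n = \dim[S/(J:\ell)]_n$ forces multiplication by $\ell$ on $[S/J]_n$ to be injective, the Snake Lemma diagram~(\ref{snake lem}) then propagates this injectivity to $J_1 = (J:\ell)$ against a fresh general linear form $\ell_2$, and one or two iterations of this reduction reach a configuration where either Gotzmann applies directly or the inductive hypothesis is available; the injectivities along the way yield identifications of the form $[J_i : \ell_{i+1}]_\bullet = [J_i]_\bullet$, through which the resulting dimension count is transferred back to $[J]_n$.

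The main obstacle is the bookkeeping in the Case~3 analogue. For $d \geq 2$ the $n$-binomial expansion of $h_A(n)$ carries a nontrivial \emph{head} of large terms $\binom{k_i}{i}$ with $k_i \geq i+2$, alongside a possibly nontrivial \emph{tail} of consecutive terms $\binom{i}{i}$ that was the only thing present in the proof of Theorem~\ref{n+1 choose n}. Tracking precisely how head and tail transform under the iterated operations $\cdot_{\langle n \rangle}$ and $\cdot^{\langle n \rangle}$, and pinning down the thresholds at which the intermediate algebras $S/(J_i,\ell_{i+1})$ transition between maximal, almost maximal, and sub-almost-maximal growth, requires a careful normal-form argument. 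Once that bookkeeping is in place --- typically by separating head from tail and observing that only the tail couples nontrivially to the almost-maximal-growth condition --- the subcase analysis generalises Subcases~3A and~3B in a straightforward way, and the induction closes each branch.
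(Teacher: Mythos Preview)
Your overall plan --- induct and pass to a general hyperplane section, controlling the restriction via Green, Macaulay, and the exact sequence --- is exactly what the paper does (the paper phrases the induction on $r$ rather than on $d = k_n - n$, but these amount to the same thing since each hyperplane section lowers both by one). You also correctly isolate the key identity $(k_{\langle n\rangle})^{\langle n\rangle} = k^{\langle n\rangle} - k$.

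Where you overcomplicate matters is in anticipating a ``Case~3 analogue.'' Once you chain your three bounds together you get
\[
(k_{\langle n\rangle})^{\langle n\rangle} \;\geq\; p^{\langle n\rangle} \;\geq\; s \;\geq\; k^{\langle n\rangle} - 1 - k \;=\; (k_{\langle n\rangle})^{\langle n\rangle} - 1,
\]
so the total slack is exactly~$1$: precisely one inequality is strict (by one) and the other two are equalities. In \emph{every} resulting configuration, $S/(J,\ell)$ has either maximal growth from degree $n$ to $n+1$ (and then Gotzmann gives a base locus of dimension $d-1$ on the hyperplane, hence $d$ for $[J]_n$) or almost maximal growth with $p = k_{\langle n\rangle}$, so that $MG(p,n) = d-1 \geq 1$ and the inductive hypothesis (or Theorem~\ref{n+1 choose n} when $d-1=1$) applies directly. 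There are no leftover configurations requiring a pass to $(J:\ell)$, no Snake Lemma diagram, and no head/tail bookkeeping. The reason Case~3 was unavoidable in the proof of Theorem~\ref{n+1 choose n} is that there the hyperplane section would drop to MG-dimension~$0$, where neither Gotzmann nor an inductive hypothesis is available; once $d \geq 2$ that obstacle vanishes, and the proof is far shorter than the one you sketch.
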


\begin{proof}

We will use induction on $r$, the number of variables in $S$.    If $r=3$ then $[A]_{\leq n} = [S]_{\leq n}$ by the assumption that $h_A(n) \geq \binom{n+2}{2}$ (in fact equality must hold), so the base locus is all of $\mathbb P^2$.  Notice that in this case $[J]_{n+1}$ has a base locus consisting of a curve of degree $n+1$, by the assumption of almost maximal growth.

Let us write the $n$-binomial expansion of $h_A(n)$:
\begin{equation} \label{n-binom exp of hn}
h_A(n) = \binom{k_n}{n} + \binom{k_{n-1}}{n-1} + \dots + \binom{k_i}{i}
\end{equation}
where $k_n > k_{n-1} > \dots > k_i \geq i \geq 1$. The condition that $h_A(n) \geq \binom{n+2}{n}$ means that  $k_n \geq n+2$.  Hence $h_A(n)_{\langle n \rangle} \geq n+1$.

We have
\[
h_A(n)^{\langle n \rangle} =  \binom{k_n+1}{n+1} + \binom{k_{n-1}+1}{n} + \dots + \binom{k_i+1}{i+1},
\]
\begin{equation} \label{hAn1}
h_A(n)_{\langle n \rangle} = \binom{k_n-1}{n} + \binom{k_{n-1}-1}{n-1} + \dots + \binom{k_i-1}{i}
\end{equation}
and
\begin{equation} \label{hAn}
[h_A(n)_{\langle n \rangle}]^{\langle n \rangle} = \binom{k_n}{n+1} + \binom{k_{n-1}}{n} + \dots + \binom{k_i}{i+1}
\end{equation}
where some of the {binomials} may be zero in the last two equalities.  Indeed, all the terms are non-zero if and only if $k_i > i$.

Let $h$ be a general linear form in $S$.  Mimicking a proof in \cite{BH}, page 172, we have the following sequence of inequalities and equalities:
\begin{equation} \label{eqns}
\begin{array}{rclll}
[h_A(n)_{\langle n \rangle}]^{\langle n \rangle} & \geq & h_{A/hA}(n)^{\langle n \rangle} && \hbox{(Green's theorem)} \\
& \geq & h_{A/hA}(n+1) && \hbox{(Macaulay's theorem)} \\
& \geq & h_A(n+1) - h_A(n) \\
& = & h_A(n)^{\langle n \rangle} - 1 - h_A(n) && \hbox{(hypothesis}) \\
& = & [h_A(n)_{\langle n \rangle}]^{\langle n \rangle} -1.
\end{array}
\end{equation}
The third line comes from the exact sequence
\begin{equation} \label{exact seq}
A(-1) \stackrel{\times h}{\longrightarrow} A \rightarrow A/hA \rightarrow 0
\end{equation}
and the last line is a straightforward computation from the definitions.  This means that one of the inequalities must be strict (differing by 1) and the others must be equalities. 

Suppose that the first line of (\ref{eqns}) is a strict inequality and the rest are equalities, i.e. we have 
\begin{equation} \label{1st line}
\begin{array}{rcl}
[h_A(n)_{\langle n \rangle}]^{\langle n \rangle} -1 & = & h_{A/hA}(n)^{\langle n \rangle}  \\
 & = & h_{A/hA}(n+1) \\
 & = & h_A(n+1) - h_A(n) \\
 & = & h_A(n)^{\langle n \rangle} - 1 - h_A(n)
\end{array}
\end{equation}

\noindent There may or may not exist an integer $m$ such that $m^{\langle n \rangle} = 
[h_A(n)_{\langle n \rangle}]^{\langle n \rangle} -1$.  If it does not, then the first line of (\ref{eqns}) must be an equality.  If 
it does exist then the $n$-binomial expansion of $m$ begins with $\binom{k_n -1}{n}$, i.e. $MG(h_{A/hA}(n),n) = k_n -1 -n \geq 1$.  The 
second equality in (\ref{1st line}) gives that $A/hA$ has maximal growth from degree $n$ to degree $n+1$, so in fact $\left [ \frac{(J,h)}{(h)} \right ]_n$ has base locus of dimension $k_n-n-1 \geq 1$, and $[J]_n$ has base locus of dimension $k_n-n = MG(h_A(n),n)$.

Now suppose that the second line of (\ref{eqns}) is a strict inequality and the rest are equalities, i.e. we have 
\begin{equation} \label{2nd line}
\begin{array}{rcl}
[h_A(n)_{\langle n \rangle}]^{\langle n \rangle} -1 & = & h_{A/hA}(n)^{\langle n \rangle} -1  \\
 & = & h_{A/hA}(n+1)   \\
 & = & h_A(n+1) - h_A(n)  \\
 & = & h_A(n)^{\langle n \rangle} - h_A(n) -1.
\end{array}
\end{equation}

In particular, we have (a) $h_A(n)_{\langle n \rangle} = h_{A/hA}(n)$ and (b) the growth of $h_{A/hA}$ is one less than maximal from degree $n$ to degree $n+1$. From (a) and the fact that  $k_n \geq n+2$, by (\ref{hAn1}) we obtain $MG(h_{A/hA}(n),n) = (k_n-1)-n \geq 1$. 
If $(k_n -1)-n = 1$, Theorem \ref{n+1 choose n} shows that the base locus of $\left [ \frac{(J,h)}{(h)} \right ]_n$ is a scheme of dimension either 0 or 1, so $[J]_n$ defines a scheme of dimension either 1 or 2, i.e. either $MG(h_A(n),n)-1$ or $MG(h_A(n),n)$.  If instead $(k_n-1)-n \geq 2$ then by induction 
$\left [ \frac{(J,h)}{(h)} \right ]_n$ defines a scheme of dimension either $(k_n-1)-n$ or $(k_n-1)-n-1$.  Then $[J]_n$ defines a scheme of dimension either $k_n-n$ or $k_n-n-1$, i.e. either $MG(h_A(n),n)$ or $MG(h_A(n),n)-1$.

Finally, suppose that the third line of (\ref{eqns}) is a strict inequality  and the rest are equalities, i.e. we have 
\begin{equation} \label{3rd line}
\begin{array}{rcl}
[h_A(n)_{\langle n \rangle}]^{\langle n \rangle} -1 & = & h_{A/hA}(n)^{\langle n \rangle} -1  \\
 & = & h_{A/hA}(n+1) -1  \\
 & = & h_A(n+1) - h_A(n)  \\
 & = & h_A(n)^{\langle n \rangle} - h_A(n) -1.
\end{array}
\end{equation}
We again have $h_A(n)_{\langle n \rangle} = h_{A/hA}(n)$ but this time the second equation of (\ref{3rd line}) shows that the growth of $h_{A/hA}$ is maximal from degree $n$ to degree $n+1$. We again have $MG(h_{A/hA}(n),n) = k_n-n-1$.  This time maximal growth implies the base locus of $\left [ \frac{(J,h)}{(h)} \right ]$ has dimension $k_n-n-1$, so the base locus of $[J]_n$ has dimension $k_n-n = MG(h_A(n),n)$.
\end{proof}

\begin{cor} \label{sect 3 pts}

Let $Z \subset \mathbb P^r$ be a finite set of points, let $L$ be a general linear form, and let $J = \frac{(I_Z,L)}{(L)} \subset S = R/(L)$, where $R = k[x_0,\dots,x_r]$.  Let $A = S/J$.  For some integer $n$, assume that $A$ satisfies the condition of one of the two previous theorems. Then the base locus of $[I_Z]_n$ is a scheme of dimension either $MG(h_A(n),n)$ or $MG(h_A(n),n)+1$.

\end{cor}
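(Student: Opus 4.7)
The plan is to deduce the dimension of the base locus of $[I_Z]_n$ in $\mathbb{P}^r$ from the dimension of the base locus of $[J]_n$ in $\mathbb{P}^{r-1}$, which is already given by Theorems \ref{n+1 choose n} and \ref{other AMG}.

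First I would set up the geometric dictionary: since $J = (I_Z,L)/(L)$ sits inside $R/(L) \cong S$, the space $[J]_n$ is obtained by restricting the forms of $[I_Z]_n$ to the hyperplane $H = V(L) \subset \mathbb{P}^r$. Let $B_Z \subset \mathbb{P}^r$ denote the base locus of $[I_Z]_n$ and let $B_J \subset H \cong \mathbb{P}^{r-1}$ denote the base locus of $[J]_n$. Then scheme-theoretically $B_J = B_Z \cap H$.

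Next I would apply the hypothesis. By Theorem \ref{n+1 choose n} or \ref{other AMG}, $B_J$ is a nonempty scheme of dimension $MG(h_A(n),n)-1$ or $MG(h_A(n),n)$, and in either case $\dim B_J \geq 0$. Since $Z$ is a finite set of points and $L$ is general, $L$ vanishes at no point of $Z$; hence $Z \cap H = \emptyset$, and more generally the zero-dimensional part of $B_Z$ (which always contains $Z$) does not meet the general hyperplane $H$. Therefore the positive-dimensional part $B_Z^+$ of $B_Z$ must be nonempty, and $B_J = B_Z^+ \cap H$.

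Finally, because $H$ is a general hyperplane and $B_Z^+$ is positive-dimensional, the standard fact that a general hyperplane section of a positive-dimensional scheme drops the dimension by exactly one gives
\[
\dim B_Z = \dim B_Z^+ = \dim(B_Z^+ \cap H) + 1 = \dim B_J + 1.
\]
Combining this with the two possibilities for $\dim B_J$ yields $\dim B_Z \in \{MG(h_A(n),n), MG(h_A(n),n)+1\}$, as desired. No real obstacle arises; the only point that requires a little care is ensuring that the zero-dimensional part of $B_Z$ is genuinely avoided by $H$, which follows from the finiteness of $Z$ and the generality of $L$.
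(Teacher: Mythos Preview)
Your argument is correct and is exactly the implicit reasoning the paper intends; the corollary is stated there without proof. The parenthetical claim that the zero-dimensional part of $B_Z$ ``always contains $Z$'' is slightly imprecise (points of $Z$ may lie on higher-dimensional components of $B_Z$), but this is harmless since all you actually use is that the zero-dimensional part of $B_Z$ is a fixed finite set, hence avoided by the general hyperplane $H$.
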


\begin{rem}
In the setting of Corollary \ref{sect 3 pts}, it is clear that we expect many of the points to lie on the base locus.  However, we do not have a good estimate for the number of such points.  Nevertheless, we carefully analyze this question in the next   section for the one remaining type of almost maximal growth (see Proposition \ref{first estimate}).  Furthermore, in section \ref{k k-1 BPF}, even when there is no higher dimensional base locus we show the surprising result that there is a ``hidden" linear variety containing many of the points. 
\end{rem}


\section{Growth type $(\dots, k, k-1)$: when there is a non-empty base locus} \label{k k-1 BL}

The first case not covered in the last section is the following.  Let  $2 \leq k \leq n$.  Then an ideal $J \subset S = k[x_1,\dots,x_r]$ with $h_{S/J}(n) = k$ must satisfy $h_{S/J}(n+1) \leq k$ by Macaulay's theorem.  Thus ``almost maximal growth" means $h_{S/J}(n+1) = k-1 \geq 1$.  In this case we will see that $[J]_n$ may or may not have a base locus, and that {the setting of arbitrary homogeneous ideals becomes almost trivial, but the setting
of artinian reductions of ideals of sets of points is very interesting.}

Thus we now let $Z \subset \mathbb P^r$ be a finite set of reduced points.  Let $L$ be a general linear form in $R = k[x_0,\dots,x_r]$, and let $J := \frac{I_Z,L}{\langle L \rangle)} \subset S := R/\langle L \rangle \cong k[x_1,\dots,x_r]$.  
Throughout this section we make the following assumption.

\begin{equation}\label{section assumption}
\begin{array}{l}
\hbox{{\it $Z$ is a reduced finite set of points whose $h$-vector   has values $k$ and $k-1$ in}} \\
\hbox{{\it  degrees $n$ and $n+1$ respectively, where $n \geq k \geq 2$, and is zero thereafter.}} \\
\end{array}
\end{equation}
 
\smallskip

The assumption that the $h$-vector is zero starting in degree $n+2$ is without loss of generality.  Indeed, we can always remove points one at a time so that the Hilbert function is continually truncated at the top {(\cite{OR}, lemma 5 and proposition 7.iii)}.  This means that the $h$-vector is unchanged in all except the last non-zero value, where it drops by one.  We repeat this until we have the desired 0 in degree $n+2$.

We recall the relevant result from \cite{BGM} for maximal growth. (We have suitably revised the statement to fit in with our situation.)

\begin{thm}[\cite{BGM} Theorem 3.6] \label{BGM k k thm}
Let $Z \subset \mathbb P^r$ be a reduced set of points.  Assume that for some integer $n$, $\Delta h_Z(n) = \Delta h_Z(n+1) = k$, where $n \geq k \geq 1$.  Then

\begin{itemize}

\item[(a)] $\langle [I_Z]_{\leq n} \rangle$ is the saturated ideal of a curve, $V$, of degree $k$ (not necessarily unmixed). Furthermore $V$ is reduced. 

\end{itemize}

\noindent Let $C$ be the unmixed one-dimensional part of $V$.  Let $Z_1$ be the subset of $Z$ lying on $C$ and let $Z_2$ be the ``residual" set.

\begin{itemize}

\item[(b)] $\langle [I_{Z_1}]_{\leq n} \rangle = I_C$.

\item[(c)] 
\[
\Delta h_{Z_1}(t) = 
\left \{
\begin{array}{ll}
\Delta h_C(t), & \hbox{for $t \leq n+1$}; \\
\Delta h_Z(t), & \hbox{for $t \geq n$}.
\end{array}
\right.
\]
In particular, $\Delta h_{Z_1}(t) = k$ for all $k \leq t \leq n+1$.

\end{itemize}

\end{thm}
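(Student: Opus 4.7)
The strategy is to pass to the artinian reduction $A = S/J$, recognize the hypothesis as maximal growth from degree $n$ to $n+1$, apply Gotzmann's persistence theorem to identify a one-dimensional base locus $V$ of degree $k$ in $\mathbb{P}^r$, and then show $V$ is reduced via the Gruson--Lazarsfeld--Peskine bound. Parts (b) and (c) then follow from the geometric decomposition of $V$.

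First I would verify that for $n \geq k$ the $n$-binomial expansion of $k$ is $\binom{n}{n} + \binom{n-1}{n-1} + \cdots + \binom{n-k+1}{n-k+1}$, a direct computation giving $k^{\langle n \rangle} = k$; thus the hypothesis is exactly maximal growth for $A$ from degree $n$ to degree $n+1$. Letting $\tilde J$ be the ideal generated by $[J]_{\leq n+1}$, Theorem \ref{gr}(iii) implies that the Hilbert polynomial of $S/\tilde J$ is the constant $k$. Hence the base locus of $[J]_n$ in $\mathbb{P}^{r-1}$ is zero-dimensional of degree $k$, and lifting through the general linear form $L$, the scheme $V \subset \mathbb{P}^r$ cut out by $\langle [I_Z]_{\leq n}\rangle^{\mathrm{sat}}$ is one-dimensional of degree $k$.

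To show $V$ is reduced I would apply Remark \ref{GLP remark}. Writing $V_{red} = \bigcup V_i$ with $V_i$ of degree $d_i$ spanning $\mathbb{P}^{n_i}$, the non-degeneracy bound $d_i \geq n_i$ gives $m_i := d_i + 2 - n_i \leq d_i$ when $d_i \geq 2$, and $m_i = 1 = d_i$ for lines; hence $\sum m_i \leq \sum d_i \leq k \leq n$. By GLP, $I_{V_{red}}$ is $n$-regular. A comparison of Hilbert functions then forces $I_V = I_{V_{red}}$: the inclusion $I_V \subseteq I_{V_{red}}$ combined with the fact that both ideals define schemes whose Hilbert polynomial has leading term $kt$ (for $V$ by the degree count above, and for $V_{red}$ because the degree cannot drop without violating the codimension computation in degree $n$) pins down equality in degree $n$, and $n$-regularity of $I_{V_{red}}$ propagates it to all higher degrees.

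For (b) and (c), note that every form of $[I_Z]_{\leq n}$ vanishes on $V$, so $Z \subset V$. Decomposing $V = C \cup F$ with $C$ the unmixed one-dimensional part and $F$ zero-dimensional (possibly embedded), set $Z_1 = Z \cap C$ and $Z_2 = Z \setminus Z_1 \subseteq F$. Since $[I_Z]_{\leq n} \subseteq [I_{Z_1}]_{\leq n}$, the saturation of $\langle [I_{Z_1}]_{\leq n}\rangle$ defines a subscheme of $V$ containing $C$; a Hilbert-polynomial comparison---no generator of $[I_{Z_1}]_{\leq n}$ can strip off a component of $C$ without dropping a point of $Z_1$---yields the equality $\langle [I_{Z_1}]_{\leq n}\rangle^{\mathrm{sat}} = I_C$, proving (b). Part (c) then follows: in degrees $\leq n+1$, the ideals $I_{Z_1}$ and $I_C$ coincide by (b), so $\Delta h_{Z_1}(t) = \Delta h_C(t)$ there; and in degrees $\geq n$, the zero-dimensional residual $Z_2$ contributes nothing to $\Delta h_Z - \Delta h_{Z_1}$. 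The central obstacle is part (a): controlling the possibly reducible curve $V$ to conclude reducedness is precisely where the full strength of the GLP bound is essential.
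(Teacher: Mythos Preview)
This theorem is not proved in the paper; it is quoted from \cite{BGM} (Theorem~3.6 there) as background, with no argument supplied here. Your overall strategy---Gotzmann persistence to produce a degree-$k$ one-dimensional base locus, then the GLP bound of Remark~\ref{GLP remark} to force reducedness---does match the approach of the original \cite{BGM} proof.

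That said, your reducedness step has a real gap. You assert that $V_{\mathrm{red}}$ has Hilbert polynomial with leading term $kt$ ``because the degree cannot drop without violating the codimension computation in degree $n$,'' but this begs the question: a priori $\deg V_{\mathrm{red}}$ could be strictly less than $k$, and nothing you have written rules that out. The missing ingredient is precisely that $Z$ is \emph{reduced}: from $Z \subset V$ one gets $Z \subset V_{\mathrm{red}}$, hence $I_{V_{\mathrm{red}}} \subseteq I_Z$; combined with $[I_Z]_n \subseteq [I_V]_n \subseteq [I_{V_{\mathrm{red}}}]_n$ this forces $[I_Z]_n = [I_{V_{\mathrm{red}}}]_n$, and only \emph{then} does the GLP regularity of $V_{\mathrm{red}}$ let you read off its degree from the Hilbert function in degree $n$. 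You also never address why $\langle [I_Z]_{\leq n}\rangle$ is itself saturated (the statement asserts this, not merely that its saturation is $I_V$), and your derivation of (c) from (b) is too quick: (b) controls only degrees $\leq n$, so obtaining $\Delta h_{Z_1}(n+1) = \Delta h_C(n+1)$ and the vanishing of $\Delta h_{Z_2}$ in degrees $\geq n$ both require separate arguments that you have not supplied.
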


The starting point of the proof of the above result was that the maximal growth condition forces the base locus of the degree $n$ (and degree $n+1$) component of $J$ to be a zero-dimensional scheme of degree $k$ (eventually proven to be reduced), which then lifts to a curve of degree $k$ containing many of the points of $Z$.  

Unfortunately, the main results of \cite{BGM} used {Gotzmann's theorem, which holds only in the case of maximal growth}.  As a result, there are more possibilities in the setting of (\ref{section assumption}).  Thus, our analysis will be based on a consideration of the possibilities for the base locus of $[J]_n$ and $[J]_{n+1}$.  

If $Z \subset \mathbb P^2$ then the analysis was already known to Davis \cite{davis}.  Thus we will assume 
\begin{equation} \label{hvector}
\begin{array}{l}
\hbox{{\it The $h$-vector of $R/I_Z$ has the form $(1,h_1, \dots, k,k-1,0)$, where the $k$ occurs in}} \\
\hbox{{\it degree $n \geq k  \geq 2$, and $3 \leq h_1 \leq r$.}}
\end{array}
\end{equation}

\begin{lem}  \label{elim hi dim}
Under the assumptions of (\ref{hvector}), let $W$ be the base locus of $[J]_n$.  Then either $\dim W = 0$ with $\deg W \leq k$ or $W = \emptyset$.  In the latter case, the linear system $|[J]_n|$ is basepoint free.
\end{lem}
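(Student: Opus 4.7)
The plan is to exploit the flatness of Macaulay's bound in the regime $k \le n$. Let $\tilde J \subset S$ denote the ideal generated by $[J]_n$; the base locus $W$ of $|[J]_n|$ is cut out (up to saturation) by $\tilde J$, so $h_{S/I_W}(t) \le h_{S/\tilde J}(t)$ for all $t \ge n$, and in particular $h_{S/\tilde J}(n) = k$.

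The key numerical observation is that since $k \le n$, the $n$-binomial expansion of $k$ consists of $k$ ones:
\[
k = \binom{n}{n} + \binom{n-1}{n-1} + \dots + \binom{n-k+1}{n-k+1},
\]
whence $k^{\langle n \rangle} = \binom{n+1}{n+1}+\binom{n}{n}+\dots+\binom{n-k+2}{n-k+2} = k$. By Macaulay's theorem (Theorem \ref{gr}(i)), this gives $h_{S/\tilde J}(n+1) \le k$. The same identity $(k')^{\langle n+j \rangle} = k'$ holds for every $k' \le k$ in every subsequent degree $n+j$ (the expansion is again a string of ones), so iterating Macaulay yields $h_{S/\tilde J}(t) \le k$ for every $t \ge n$.

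I would then conclude as follows: the Hilbert polynomial of $S/I_W$ must be a constant $c \le k$, because a positive-dimensional subscheme of $\mathbb P^{r-1}$ has unbounded Hilbert polynomial. If $c > 0$ then $W$ is zero-dimensional of degree $c \le k$; if $c = 0$ then $W = \emptyset$, and $|[J]_n|$ is basepoint free by definition. This yields the stated dichotomy.

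No serious obstacle is expected for this particular lemma: it is a purely numerical consequence of Macaulay's theorem once one observes that $k \le n$ leaves no room for growth. The reducedness of $Z$, and even the fact that $J$ arises as the artinian reduction of a set of points, plays no role in this step. The substantive work of the section will begin only after this reduction, when one needs to separate the basepoint-free case from the zero-dimensional case and to relate the scheme $W$ to the geometry of $Z$.
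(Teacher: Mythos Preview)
Your argument is correct and complete. Both your proof and the paper's rely on the inclusion $[J]_n \subseteq [I_W]_n$ together with the constraint $k \le n$, but they exploit it differently. The paper argues in a single line at degree $n$: if $\dim W \ge 1$ then $h_{S/I_W}(n) \ge n+1$ (any curve in $\mathbb P^{r-1}$ has at least $n+1$ independent conditions in degree $n$), which already contradicts $h_{S/J}(n)=k\le n$. You instead iterate Macaulay on $\tilde J=\langle [J]_n\rangle$ to bound the entire Hilbert polynomial by the constant $k$, then read off the dimension. Your route is slightly longer but has the advantage of giving the degree bound $\deg W\le k$ transparently, whereas the paper's ``this also shows $\deg W\le k$'' is terse and in fact implicitly needs the same Macaulay iteration (or an equivalent regularity statement) that you spelled out. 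So the two proofs are morally the same, with yours being more self-contained on the degree estimate and the paper's being quicker on the dimension estimate.
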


\begin{proof}
If  $W$ were one-dimensional or more (as a subscheme of the $\mathbb P^{r-1}$ defined by $L$) then the elements of $[J]_n$ all lie in $[I_W]_n$.  This means that the Hilbert function of $S/J$ is greater than or equal to that of $S/I_W$ in degree $n$, which is not possible since $h_{S/I_W}(n) \geq n+1$. This also shows $\deg W \leq k$.  Hence if the base locus of $[J]_n$ is not empty, it is zero-dimensional.  The last assertion is obvious.
\end{proof}

We will see that both possibilities allowed by Lemma \ref{elim hi dim} actually can occur.  In this section we deal with the first of the two, i.e. the case $\dim W = 0$.  We first show that the base locus can have any degree $\leq k$.

\begin{prop} \label{aaaaa}
Fix any integer $d$ with $1 \leq d \leq k \leq n$.  Then there exists a set of points $Z \subset \mathbb P^r$ satisfying (\ref{hvector}),
for which $[J]_n$ has base locus consisting  of precisely $d$ points.  That is, $[I_Z]_n$ has a base locus whose 1-dimensional component is a reduced curve of degree $d$. 
\end{prop}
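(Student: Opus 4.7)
I intend to establish the proposition by an explicit construction $Z = Z_1 \cup Z_2$, where $Z_1$ lies on a reduced degree-$d$ curve $Y$ (supplying the 1-dimensional component of the base locus) and $Z_2$ consists of general points off $Y$ (tuned to produce the prescribed $h$-vector). For $r$ sufficiently large, I would take $Y = \ell_1 \cup \cdots \cup \ell_d \subset \mathbb P^r$ to be the union of $d$ pairwise skew lines in linearly general position, and place at least $n+2$ general points on each $\ell_i$ to form $Z_1$. Since a form of degree $\leq n+1$ restricted to a line with $n+2$ zeros must vanish identically on that line, we obtain $[I_{Z_1}]_{\leq n+1} \subseteq [I_Y]_{\leq n+1}$, so $Y$ will necessarily sit inside the base locus of $[I_Z]_n$.

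Next, I adjoin a set $Z_2$ of general points in $\mathbb P^r \setminus Y$, with cardinality and position chosen so that $Z = Z_1 \cup Z_2$ has the prescribed $h$-vector $(1, h_1, \ldots, k, k-1, 0)$ of \eqref{hvector}. The Hilbert function of $Z_1$ is computable (for sufficiently dense points on $Y$ it agrees with $h_Y$ in small degrees and stabilizes at $|Z_1|$ above a certain degree), while adjoining general points modifies the Hilbert function in the usual controlled way. A natural implementation is to first realize the boundary case $d = k$ directly via Theorem~\ref{BGM k k thm} (maximal growth produces an $h$-vector $(\ldots, k, k, 0)$ with a degree-$k$ base locus curve) and then reduce to general $d < k$ by adjoining $k - d$ further generic points, tuned so as to push the entry in degree $n$ from $d$ up to $k$, force the entry $k-1$ in degree $n+1$, and yield $0$ in degree $n+2$. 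The truncation Lemma of \cite{OR} referenced earlier allows one to adjust the tail of the $h$-vector by a controlled removal/addition of points.

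Finally, I verify the base locus claim. Containment of $Y$ in the 1-dimensional base locus of $[I_Z]_n$ is immediate from $[I_Z]_n \subseteq [I_{Z_1}]_n \subseteq [I_Y]_n$. For the reverse direction, any strictly larger reduced curve $Y' \supsetneq Y$ appearing in the base locus would have to contain all points of $Z_2$, contradicting their general position; thus the 1-dimensional component of the base locus of $[I_Z]_n$ is exactly $Y$, a reduced curve of degree $d$, as claimed. Passing to the artinian reduction by a general linear form then identifies this curve with exactly $d$ base points of $[J]_n$ in $\mathbb P^{r-1}$.

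The principal obstacle is the exact $h$-vector bookkeeping in the middle step. Making the top of the $h$-vector match precisely the shape $(k, k-1, 0)$—rather than $(k, k, 0)$ or $(k-1, k-1, 0)$—requires careful control of how many generic points to add and in which degrees they first contribute to the Hilbert function, since the growth from $k$ to $k-1$ is strictly less than Macaulay's bound allows. This should be resolved either by an inductive argument on $k - d$ anchored at the case $d = k$ covered by Theorem~\ref{BGM k k thm}, or by a direct computation that combines the known Hilbert function of points on a general union of skew lines with the generic Hilbert function of points in $\mathbb P^r$.
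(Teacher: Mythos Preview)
Your plan has a genuine gap in the middle step, and it is precisely the one you flag.  Adjoining a generic point $P\in\mathbb P^r$ to a finite set $W$ changes the $h$-vector only at the \emph{initial degree} of $I_W$: for $t$ at least this degree one has $h_{W\cup P}(t)=h_W(t)+1$, so $\Delta h_{W\cup P}(t)=\Delta h_W(t)$ for all larger $t$.  Your $Z_1$ (a bounded number of points on $d$ skew lines) has initial degree far below $n$, so its $h$-vector is $(\ldots,d,d,\ldots,d,\epsilon)$ with the plateau starting early; adjoining generic points of $\mathbb P^r$ will inflate the early entries but leave the values $d,d$ (or $d,d-1$) in degrees $n,n+1$ untouched.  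Thus neither ``adding $k-d$ generic points'' nor an induction anchored at $d=k$ can push those late entries up to $k,k-1$.  The base-locus verification is also backwards: from $[I_Z]_n\subseteq[I_{Z_1}]_n$ you only get that the base locus of $[I_Z]_n$ \emph{contains} $Y$, not the reverse, so ruling out extra $1$-dimensional components needs a separate argument.

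The paper's construction sidesteps both issues by working entirely inside a fixed plane $\Gamma\subset\mathbb P^r$.  The curve is a smooth plane curve $C\subset\Gamma$ of degree $d$, and the residual set $Z_2\subset\Gamma$ is \emph{not} generic: it is obtained from a general complete intersection in $\Gamma$ of type $\bigl(\lfloor\frac{k+n-2d+1}{2}\rfloor,\lceil\frac{k+n-2d+1}{2}\rceil\bigr)$ by removing $\binom{k-d-1}{2}$ points, so that its $h$-vector is $\ldots,k-d,k-d-1,0$ in degrees $n-d,\,n-d+1,\,n-d+2$.  B\'ezout in the plane forces every degree-$t$ form ($t\le n+1$) through $Z_1$ to contain $C$, which yields the Davis-type identity
\[
\Delta h_{R/I_Z}(t)=\Delta h_{R/I_{Z_2}}(t-d)+h_{R/I_C}(t)\qquad(t\le n+1),
\]
and the degree shift by $d$ is exactly what places the values $k,k-1$ in degrees $n,n+1$.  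The same B\'ezout argument shows the $1$-dimensional base locus in degree $n$ is $C$ and nothing more, since $[I_{Z_2}]_{n-d}$ has only a zero-dimensional base locus.  Only at the very end does one add up to $r-2$ general points of $\mathbb P^r$, and solely to adjust $h_1$.
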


\begin{proof}
We proceed in three steps.  First, fix a plane $\Gamma \subset \mathbb P^r$ and choose a smooth plane curve $C$ of degree $d$ on $\Gamma$.  Choose a general set, $Z_1$, of 
\[
1+\dots + d + d + \dots + d + (d-1) = \binom{d}{2} + (n-d+2)d + (d-1)
\]
points on $C$.  The $h$-vector of $Z_1$ is $(1,\dots,d,d,\dots,d,d-1)$ where the last $d$ occurs in degree $n$.  (The first $d$ entries increase monotonically by 1, but if $d=1$ then there is no increase.)

For the second step, if $d=k$ then we do nothing and proceed to the third step.  If, instead, we have $d<k$, then we want to add points on $\Gamma$ so that $\Delta h_{R/I_Z}(n) = k$, $\Delta h_{R/I_Z}(n+1) = k-1$, and the base locus in degree $n$ and $n+1$ remains a curve of degree $d$.  We first add one more general point to $Z_1$ on $C$, making the value of the $h$-vector in degree $n+1$ now $d$.  Let $Z_2'$ be a general complete intersection in $\Gamma$ of type
\[
\left (\left  \lfloor \frac{k+n-2d+1}{2} \right \rfloor , \left \lceil \frac{k+n-2d+1}{2} \right \rceil \right ).
\]
Observe that the $h$-vector of $Z_2'$ in degree $n-d$ is $k-d$, and the value in degree $n-d+1$ is $k-d-1$.  Since $Z_2'$ is in uniform position, we can remove any $\binom{k-d-1}{2}$ points from $Z_2'$ to obtain a set of points $Z_2$ whose $h$-vector agrees with that of $Z_2'$ up to degree $n-d+1$ and is zero thereafter.  The base locus of the ideal of $Z_2$ in degree $n-d$ is zero-dimensional.  Let $Z = Z_1 \cup Z_2$.  We claim that for $t \leq n+1$ we have 
\[
\Delta h_{R/I_Z}(t) = \Delta h_{R/I_{Z_2}}(t-d) + h_{R/I_{C}}(t).
\]
It is enough to think of these algebras as quotients of $k[x,y,z]$.  Then this follows from the work of Davis, but can also be seen directly with a simple computation using Bezout's theorem, since for $t \leq n+1$ any plane curve of degree $t$ containing $Z_1$ must contain $C$ as a component, so $\dim [I_Z]_t = \dim [I_{Z_2}]_{t-d}$.  With this formula we obtain that the $h$-vector of $Z$ has value $k$ in degree $n$ and $k-1$ in degree $n+1$ as desired.  Also, the base locus of $I_Z$ in degrees $n$ and $n+1$ has only $C$ as 1-dimensional base locus.

For the third step, observe that $Z$ as constructed so far has $h_1 = 2$.  We now merely add up to $r-2$ general points to obtain $3 \leq h_1 \leq r$, and we are finished.
\end{proof}

We would like to point out, though, that in Proposition \ref{aaaaa} there are at least 
\[
1+2+3+\dots+k + k + \dots +k + (k-1) = \binom{k}{2} + k(n-k+3) -1
\]
points on a plane, which also contains the 1-dimensional component.  It is possible to construct examples where the curve (and hence many of the points of $Z$) does not lie on a plane.  We give an example.

\begin{ex}
Consider the case of finite sets of points in $\mathbb P^3$ and suppose $n=8$ and $k=7$.  We show that for any $1 \leq d \leq 7$ there is a set of points $Z \subset \mathbb P^3$ such that 
\begin{itemize}

\item $\Delta h_{R/I_Z} (8) = 7$ and $\Delta h_{R/I_Z}(9) = 6$;

\smallskip

\item the base locus of $[I_Z]_8$ is a non-degenerate, smooth curve, $C$, of degree $d$.

\smallskip

\end{itemize}

\noindent Furthermore, we show that there is a set of points whose $h$-vector ends $(\dots,7,6)$ and whose base locus for the component in degree $8$ is zero-dimensional (in fact it consists only of points of $Z$).  This is in stark contrast to the $h$-vector $(\dots,7,7)$, where the base locus must be a curve of degree $7$.

For fixed $d$, $1 \leq d \leq 7$, let $C$ be a smooth, irreducible arithmetically Cohen-Macaulay curve of degree $d$ whose $h$-vector is as short as possible, and let $Z_1$ be a general set of points on $C$ with $h$-vector as follows:
\[
\begin{array}{c|lcccccccccc}
d & \hbox{$h$-vector of $Z_1$} \\ \hline
1 & (1,1,1,1,1,1,1,1,1) \\
2 & (1,2,2,2,2,2,2,2,2) \\
3 & (1,3,3,3,3,3,3,3,3) \\
4 & (1,3,4,4,4,4,4,4,4) \\
\end{array}
\ \ \ \ 
\begin{array}{c|lcccccccccc}
d & \hbox{$h$-vector of $Z_1$} \\ \hline
5 & (1,3,5,5,5,5,5,5,5) \\
6 & (1,3,6,6,6,6,6,6,6,6) \\
7 & (1,3,6,7,7,7,7,7,7,7) \\
& \\
\end{array}
\]
(Note that the length of the $h$-vector in the last two cases is one more than the lengths in the previous cases.)

In the case $d=7$ we simply remove any point $P$ from $Z_1$ and we are finished: $Z = Z_1 - \{P\}$ and the $h$-vector is $(1,3,6,7,7,7,7,7,7,6)$.

In the case $d=6$ we add 81 general points of $\mathbb P^3$ to $Z_1$, arriving at the $h$-vector \linebreak
$(1,3,6,10,15,21,28,36,7,6)$.

Now assume $1 \leq d \leq 5$.
Let $F_1, F_2$ be a regular sequence in $I_C$ of type $(7,9-d)$, hence also a regular sequence of type $(7, 9-d)$ in $I_{Z_1}$.  Let $L$ be a general linear form.  The ideal $\langle L, F_1, F_2 \rangle$ is the saturated ideal of a complete intersection, $Z_2$, of type $(7, 9-d)$ in the plane defined by $L$, hence of $63-7d$ reduced points.  Thanks to [\cite{KMMNP} Lemma 4.8 and Remark 4.10] or [\cite{book} Proposition 5.4.5], the ideal $L \cdot I_{Z_1} + \langle F_1, F_2 \rangle$ is the saturated ideal of the union, $Z$, of $Z_1$ and $Z_2$.  Its $h$-vector is
\[
\begin{array}{c|cccccccccccc}
\hbox{deg} & 0 & 1 & 2 & 3 & 4 & 5 & 6 & 7 & 8 & 9 \\ \hline
Z_1 & & 1 & * & * & d & d & d & d & d & d  \\
Z_2 & 1 & 2 & 3 & 4 & * & * & * & *  & 7-d & 6-d & \dots   \\ \hline
& 1 & 3 & * & * & * & * & * & *  & 7 & 6 & \dots
\end{array}
\]
By \cite{GMR}, a subset of $Z_1 \cup Z_2$ can be found by removing a suitable set of points of $Z_2$, so that the $h$-vector of the resulting set of points is $(1,3, \dots ,7,6)$ (with the 7 in degree 8).  We thus have $k = 7, n = 8$ so the numerical assumptions are satisfied.

Now, in degrees 7 and 8, $Z_2$ does not have a one-dimensional component in the base locus, hence one can check from the form of the ideal given above (and confirm on \cocoa) that the one-dimensional base locus of $Z$ in degree $8$ and degree 9 has degree $d$.

To arrange for the base locus to be zero-dimensional, we replace the set of points on a curve with any set of points with regularity less than 7 (say), and let $\langle F_1,F_2 \rangle$ be a complete intersection of type $(7,9)$ and apply the same construction.
\qed
\end{ex}

We now ask what consequences are imposed by the assumption that the base locus of $[J]_n$ (or $[J]_{n+1}$) is zero-dimensional of degree $d$.  We first recall the following result.

\begin{prop}[\cite{BGM} Corollary 3.7] \label{BGM cor 3.7}
Let $Z \subset \mathbb P^r$ be a reduced finite set of points.  Assume that for some $n \geq d$, $\Delta h_{R/I_Z}(n) = d$ and that the saturated ideal $\langle [I_Z]_{\leq n}\rangle^{sat}$ defines a curve of degree $d$.  Then all the conclusions of Theorem \ref{BGM k k thm} continue to hold.
\end{prop}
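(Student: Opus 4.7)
The hypothesis of Proposition~\ref{BGM cor 3.7} directly supplies the intermediate conclusion that was the heart of the proof of Theorem~\ref{BGM k k thm}: namely, that the saturated ideal $\langle [I_Z]_{\leq n}\rangle^{sat}$ cuts out a curve $V$ of degree $d$. In Theorem~\ref{BGM k k thm} this was deduced from the maximal growth $\Delta h_Z(n)=\Delta h_Z(n+1)=k$ together with the Gotzmann persistence theorem applied to the artinian reduction. My plan, accordingly, is to bypass that deduction and re-run the remainder of the argument of Theorem~\ref{BGM k k thm} directly from the curve $V$ furnished by the hypothesis.

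The first substantive step will be to show that $V$ is reduced and that $\langle[I_Z]_{\leq n}\rangle = I_V$ (not merely that they agree after saturation). For the latter, I want to argue that the assumption $\Delta h_{R/I_Z}(n)=d$, combined with the fact that the Hilbert polynomial of a curve of degree $d$ is $dt+c$, forces $\dim [I_Z]_{\leq n}$ to match $\dim [I_V]_{\leq n}$ in the relevant degrees, so no saturation is needed. For reducedness: if $V$ carried embedded points or a multiple component, writing $V=C\cup V'$ with $C$ the unmixed reduced $1$-dimensional part and $\deg C<d$, the Hilbert function of $R/I_V$ would show growth incompatible with $\Delta h_Z(n)=d$ in light of the containment $I_V \supset \langle[I_Z]_{\leq n}\rangle$; here I plan to invoke the Gruson--Lazarsfeld--Peskine regularity bound (Remark~\ref{GLP remark}), which constrains the regularity of the reduced curve in each dimension and ultimately prevents the nonreduced scenario.

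Once $V$ is shown to be a reduced curve of degree $d$, I set $Z_1 = Z\cap V$ and $Z_2 = Z \setminus Z_1$. To prove (b), that $\langle[I_{Z_1}]_{\leq n}\rangle = I_C$, one containment is immediate since $Z_1 \subset C$, while the reverse follows from the fact that any form of degree $\leq n$ vanishing on $Z_1$ must, by Bezout-type considerations on $C$ (bounded by the GLP regularity), vanish on all of $C$. For the Hilbert function identity in (c), the short exact sequence
\[
0 \to I_Z \to I_{Z_1}\cap I_{V'} \oplus \text{(twist)} \to \cdots
\]
together with the residual decomposition relating $I_Z$, $I_{Z_1}$, and $I_{Z_2}$, yields the claimed equalities on $\Delta h$ by a direct numerical computation. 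This is the step where, in Davis's original $\mathbb{P}^2$ result, one uses $(I_Z:F)$ and $(I_Z,F)$; here the analogous ideals involve $I_C$ in place of a single form $F$.

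The main obstacle I foresee is the reducedness step: in higher $r$ the possible non-reduced structures on a degree-$d$ curve are varied (multiple lines, embedded points on a smooth curve, etc.) and excluding each type will require a tailored argument using the GLP regularity and the constraint that $Z$ itself is reduced and has the prescribed $h$-vector. Once reducedness is in hand, the geometric decomposition $Z=Z_1\sqcup Z_2$ and the Hilbert function bookkeeping should be routine, mirroring the corresponding steps in the proof of Theorem~\ref{BGM k k thm}.
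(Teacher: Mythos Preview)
The paper does not contain a proof of this proposition. It is quoted verbatim as Corollary~3.7 of \cite{BGM}, with no argument supplied here; the paper simply invokes it as a known result and immediately applies it in Corollary~\ref{base locus k-1} and Proposition~\ref{mingen}. So there is nothing in this paper to compare your proposal against.

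That said, your overall strategy is the right one and matches the spirit of how \cite{BGM} proceeds: the hypothesis hands you directly the conclusion that in Theorem~\ref{BGM k k thm} was extracted from maximal growth via Gotzmann, and from that point the decomposition $Z=Z_1\cup Z_2$ and the Hilbert-function bookkeeping go through unchanged. Two cautions about your sketch. First, your plan to show $\langle[I_Z]_{\leq n}\rangle = I_V$ (already saturated) is not part of the statement and need not hold in general; what matters is the saturation, and the conclusions (a)--(c) of Theorem~\ref{BGM k k thm} are phrased accordingly. Second, the reducedness of $V$ is not something you have to derive from scratch via a case analysis of non-reduced structures; in \cite{BGM} it follows because $Z$ is reduced and the ideal in question is generated by forms vanishing on $Z$, so any component or embedded structure not witnessed by points of $Z$ would contradict the degree count $\Delta h_{R/I_Z}(n)=d$ together with the GLP regularity bound (Remark~\ref{GLP remark}). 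Your instinct to use GLP there is correct, but the argument is shorter than the obstacle you anticipate.
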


\begin{cor} \label{base locus k-1}
Assume that $[J]_{n+1}$ has a base locus consisting of a zero-dimensional scheme in $\mathbb P^{r-1}$ of degree $k-1$, together with the other assumptions highlighted so far in this section.  Then all the conclusions of Theorem \ref{BGM k k thm} continue to hold.  In particular, $[I_Z]_{n+1}$ has a reduced 1-dimensional component of degree $k-1$.
\end{cor}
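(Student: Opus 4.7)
The plan is to reduce the statement to Proposition \ref{BGM cor 3.7}, applied with $n+1$ and $k-1$ in the roles of $n$ and $d$. The numerical hypothesis $\Delta h_{R/I_Z}(n+1)=k-1$ is built into the standing assumption (\ref{section assumption}), and $n+1\geq k-1$ is immediate from $n\geq k$. So the real task is geometric: show that $\langle [I_Z]_{\leq n+1}\rangle^{\mathrm{sat}}$ defines a curve of degree $k-1$ in $\mathbb P^r$. Once that is done, Proposition \ref{BGM cor 3.7} transfers all conclusions of Theorem \ref{BGM k k thm}---including the reducedness of the curve and the decomposition of $Z$ into points on $C$ and residual points---with $n+1$ and $k-1$ replacing $n$ and $k$, yielding the corollary.

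For the lifting, I would let $B\subset\mathbb P^r$ be the scheme-theoretic base locus of $[I_Z]_{n+1}$ and consider its intersection with $\{L=0\}$. Restriction to this hyperplane annihilates $L\cdot [R]_n\subset [(I_Z,L)]_{n+1}$, so restriction identifies $[I_Z]_{n+1}$ with $[J]_{n+1}$ as linear systems on $\mathbb P^{r-1}$; hence $B\cap\{L=0\}$ coincides, as a subscheme of $\mathbb P^{r-1}$, with the base locus of $[J]_{n+1}$, which by hypothesis is zero-dimensional of degree $k-1$. I would then exploit the genericity of $L$ in two ways: on the one hand $L$ avoids every zero-dimensional component of $B$ (in particular every point of $Z$, which is finite); on the other hand $L$ meets each positive-dimensional component of $B$ properly. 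Consequently $B$ has no component of dimension $\geq 2$, its positive-dimensional part $V$ is pure of dimension $1$, and matching the degree of the hyperplane section forces $\deg V=k-1$. Note that Theorem \ref{BGM k k thm}(a) explicitly allows $V$ to be non-unmixed, so any isolated zero-dimensional pieces of $B$ (such as points of $Z$ not on $V$) cause no trouble.

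The main delicate point is the scheme-theoretic identification of $B\cap\{L=0\}$ with the base locus of $[J]_{n+1}$, together with the transversality of $L$ with each irreducible component of $B$. Both are standard: only finitely many conditions (avoiding the finitely many zero-dimensional components of $B$, and proper intersection with the finitely many positive-dimensional components) are imposed on $L$, and these are guaranteed by the ``general $L$'' already built into the definition of $J$ in (\ref{section assumption}). With this geometric input, the hypotheses of Proposition \ref{BGM cor 3.7} are verified for the pair $(n+1,k-1)$, and the conclusions of Theorem \ref{BGM k k thm}---in particular the reduced $1$-dimensional component of degree $k-1$ for $[I_Z]_{n+1}$ asserted in the corollary---follow at once.
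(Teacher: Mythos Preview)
Your proposal is correct and is precisely the argument the paper intends: the corollary is stated immediately after Proposition~\ref{BGM cor 3.7} with no proof, and is meant to be a direct application of that proposition with $(n+1,k-1)$ in place of $(n,d)$. Your lifting argument---cutting the base locus of $[I_Z]_{n+1}$ with the general hyperplane $L$ to recover the degree-$(k-1)$ zero-scheme---is the standard step that makes the hypothesis of Proposition~\ref{BGM cor 3.7} explicit, and you handle the scheme-theoretic and genericity details carefully.
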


\begin{prop}  \label{mingen}
Suppose that $I_Z$ has a minimal generator of degree $n+1$, together with our assumption (\ref{hvector}).   Then all the conclusions of Theorem \ref{BGM k k thm} continue to hold.  In particular, $[I_Z]_n$ has a reduced 1-dimensional component of degree~$k$.
\end{prop}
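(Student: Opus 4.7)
The plan is to reduce the statement to Proposition \ref{BGM cor 3.7}, by showing that the hypothesis of a minimal generator in degree $n+1$ forces the saturated ideal $\langle [I_Z]_{\leq n}\rangle^{sat}$ to cut out a curve of degree exactly $k$ in $\mathbb{P}^r$. By Lemma \ref{elim hi dim} we already know that the base locus of $[J]_n$ in $\mathbb{P}^{r-1}$ is either empty or zero-dimensional of degree at most $k$; the task is to rule out the strict inequality using the new generator.

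Since $L$ is generic and $Z$ is reduced and zero-dimensional, $L$ is a nonzerodivisor on the depth-$1$ ring $R/I_Z$, so passage to the artinian reduction preserves graded Betti numbers; in particular $J$ also has a minimal generator in degree $n+1$. Set $J' = \langle [J]_{\leq n}\rangle \subseteq J$. Because the new generator does not lie in $S_1 \cdot [J]_n$, we have $[J']_{n+1} = S_1 \cdot [J]_n \subsetneq [J]_{n+1}$, which yields
\[
h_{S/J'}(n+1) \;\geq\; h_{S/J}(n+1)+1 \;=\; k.
\]
Because $k \leq n$, the $n$-binomial expansion of $k$ is $\binom{n}{n}+\binom{n-1}{n-1}+\cdots+\binom{n-k+1}{n-k+1}$, and consequently $k^{\langle n\rangle} = k$. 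Macaulay's theorem (Theorem \ref{gr}(i)) applied to $S/J'$ thus forces $h_{S/J'}(n+1) = k$, so $S/J'$ has maximal growth from degree $n$ to $n+1$. Since $J'$ is by construction generated in degrees $\leq n$, the Gotzmann persistence theorem (Theorem \ref{gr}(iii)) applies; as $MG(k,n)=0$, it yields that the Hilbert polynomial of $S/J'$ is the constant $k$. Equivalently, the base locus of $[J]_n$ in $\mathbb{P}^{r-1}$ is a zero-dimensional scheme of degree exactly~$k$.

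To lift this information back to $\mathbb{P}^r$, set $I'' = \langle [I_Z]_{\leq n}\rangle \subseteq R$, so that $J' = (I''+(L))/(L)$. For $L$ generic, the exact sequence (\ref{ses}) degenerates, in sufficiently high degrees, to a short exact sequence $0 \to R/I''(-1) \xrightarrow{\times L} R/I'' \to S/J' \to 0$, and we read off $\Delta h_{R/I''}(t) = h_{S/J'}(t) = k$ for all $t \gg 0$. Hence the Hilbert polynomial of $R/I''$ has degree $1$ with leading coefficient $k$, so $(I'')^{sat} = \langle [I_Z]_{\leq n}\rangle^{sat}$ defines a pure one-dimensional scheme of degree $k$ in $\mathbb{P}^r$. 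Proposition \ref{BGM cor 3.7}, applied with $d = k$, then delivers all the conclusions of Theorem \ref{BGM k k thm}, and in particular exhibits a reduced one-dimensional component of degree $k$ in the base locus of $[I_Z]_n$.

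The main obstacle is the passage from the artinian reduction back to $\mathbb{P}^r$ in the final step: one needs to justify that generic $L$ is transverse enough to $V(I'')$ that its hyperplane section computes the Hilbert polynomial correctly, and that any embedded zero-dimensional components of $V(I'')$, if present, do not perturb the degree of the one-dimensional part. This is essentially automatic once one observes that a generic $L$ avoids every non-irrelevant associated prime of $R/I''$, so that the kernel $(I'':L)/I''$ is supported at the irrelevant ideal and vanishes in sufficiently high degrees, but this is the most delicate piece of bookkeeping in the argument.
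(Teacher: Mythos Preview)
Your argument is correct and follows the paper's route: pass to the artinian reduction, remove the degree-$(n{+}1)$ generator(s) to obtain $J'$ with $h_{S/J'}(n)=h_{S/J'}(n+1)=k$, deduce from maximal growth that $[J]_n=[J']_n$ has a degree-$k$ zero-dimensional base locus, lift to a degree-$k$ curve in $\mathbb P^r$, and apply Proposition~\ref{BGM cor 3.7}. The paper compresses this into four terse lines; you simply supply the details (Macaulay's bound, Gotzmann persistence, and the genericity of $L$ relative to the associated primes of $\langle [I_Z]_{\leq n}\rangle$) that the paper leaves implicit.
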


\begin{proof}
If $I_Z$ has such minimal generator then so does the artinian reduction $J$.  Removing this generator from $J$ gives a new artinian ideal $J'$ that agrees with $J$ through degree $n$, and whose Hilbert function has the value $k$ degree $n$. This means that $[J']_n$, and hence also $[J]_n$, has a base locus consisting of a zero-dimensional scheme of degree $k$.  It follows that $[I_Z]_n$ has a base locus with a one-dimensional component of degree $k$.  Thus Proposition \ref{BGM cor 3.7} applies and we are done.
\end{proof}

\begin{cor} \label{count pts sect 4}
In the setting of Proposition \ref{mingen}, at least 
\[
\binom{k}{2} + k(n-k+3) -1
\]
points of $Z$ lie on a curve of degree $k$.  In the setting of Corollary \ref{base locus k-1}, at least 
\[
\binom{k-1}{2} + (k-1)(n-k+4)
\]
points lie on a curve of degree $k-1$.
\end{cor}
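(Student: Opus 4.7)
The plan is to lower-bound $|Z_1|$ (respectively $|Z_1'|$) by summing the values of $\Delta h_{Z_1}$ (resp.\ $\Delta h_{Z_1'}$) furnished by Proposition~\ref{BGM cor 3.7}, combined with a simple O-sequence argument that pins down these values in low degrees. The corollary is purely a counting statement — the heavy lifting (existence of the curve, lifting of the base locus, formula for $\Delta h_{Z_1}$) has already been done in Proposition~\ref{mingen} and Corollary~\ref{base locus k-1}.

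First, in the setting of Proposition~\ref{mingen}, the conclusions of Theorem~\ref{BGM k k thm} give a reduced curve $C$ of degree $k$ and a subset $Z_1 \subseteq Z$ on $C$ with $\Delta h_{Z_1}(t) = \Delta h_C(t)$ for $t \leq n+1$ and $\Delta h_{Z_1}(t) = \Delta h_Z(t)$ for $t \geq n$. In particular $\Delta h_{Z_1}(n) = k$, $\Delta h_{Z_1}(n+1) = k-1$, $\Delta h_{Z_1}(n+2) = 0$, and $\Delta h_{Z_1}(t) \leq k$ for all $t$ since $\Delta h_C$ is bounded above by the degree of $C$. Moreover $\Delta h_{Z_1}$ is the Hilbert function of an artinian reduction of $R/I_{Z_1}$, hence an O-sequence.

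The key step is to show $\Delta h_{Z_1}(t) \geq \min(t+1,\,k)$ for $0 \leq t \leq n$. The observation is elementary: if an O-sequence $(a_0,a_1,\dots)$ satisfies $a_t \leq t$, then the $t$-binomial expansion of $a_t$ consists of terms $\binom{s}{s}$ only, so $a_t^{\langle t \rangle} = a_t$ and consequently $a_{t+1} \leq a_t$. Iterating, the sequence becomes non-increasing from the first degree at which it drops to or below the index. Applied to $\Delta h_{Z_1}$: if $\Delta h_{Z_1}(t) \leq t$ at some $t \leq n-1$, the sequence would be non-increasing through degree $n$, forcing $\Delta h_{Z_1}(n) \leq t < k$, contradicting $\Delta h_{Z_1}(n) = k$. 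Therefore $\Delta h_{Z_1}(t) \geq t+1$ whenever $t < k$, and $\Delta h_{Z_1}(t) = k$ for $k-1 \leq t \leq n$ (combining $\leq k$ and $\geq k$). Summing,
\[
|Z_1| \;\geq\; \sum_{t=0}^{k-2}(t+1) \;+\; \sum_{t=k-1}^{n} k \;+\; (k-1) \;=\; \binom{k}{2} + k(n-k+3) - 1,
\]
which is the first claimed bound.

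For the second statement, the argument is identical with the curve $C'$ of degree $k-1$ replacing $C$ and Proposition~\ref{BGM cor 3.7} applied in degree $n+1$: we obtain $\Delta h_{Z_1'}(t) = \Delta h_{C'}(t) \leq k-1$ for $t \leq n+1$, $\Delta h_{Z_1'}(n+1) = k-1$, $\Delta h_{Z_1'}(n+2) = 0$, and the same O-sequence argument gives $\Delta h_{Z_1'}(t) \geq \min(t+1,\,k-1)$ for $t \leq n+1$ with equality to $k-1$ throughout $k-2 \leq t \leq n+1$. Summing yields
\[
|Z_1'| \;\geq\; \binom{k-1}{2} + (k-1)(n-k+4).
\]
The only delicate point is the O-sequence step; once it is in place, both estimates reduce to straightforward arithmetic, so I do not anticipate further obstacles.
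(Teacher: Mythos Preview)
Your approach is essentially the same as the paper's: both bound $|Z_1|$ from below by the $h$-vector $(1,2,\dots,k-1,k,k,\dots,k,k-1)$, which the paper phrases as ``the sum of the entries is minimized when $C$ is a plane curve'' and leaves at that, while you supply the explicit O-sequence argument.

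One small correction: your claim that $\Delta h_{Z_1}(t)\le k$ because ``$\Delta h_C$ is bounded above by the degree of $C$'' is not true in general (for two skew lines in $\mathbb{P}^3$ one has $\Delta h_C(1)=3>2$). Fortunately this upper bound is irrelevant for the lower bound on $|Z_1|$; only $\Delta h_{Z_1}(t)\ge k$ for $k-1\le t\le n$ is needed. Your O-sequence lemma already delivers this once you extend its application slightly: if $\Delta h_{Z_1}(t)\le k-1$ for some $k\le t\le n-1$, then $\Delta h_{Z_1}(t)\le t$, so the sequence is non-increasing from there on, contradicting $\Delta h_{Z_1}(n)=k$. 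With that tweak your argument is complete, and the second bound follows in the same way.
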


\begin{proof}
We will use several times in this paper the observation that if $Z_1$ is a set of points lying on a curve $C$ of some given degree $d$ then the sum of the entries of the $h$-vector (i.e. the number of points) of $Z_1$ is minimized when $C$ is a plane curve, since then Theorem \ref{BGM k k thm} (once we know that it applies) gives that the $h$-vector is of the form $(1,2,3,\dots, (d-1),d,d,\dots,d, \epsilon$ (where $\epsilon \leq d$) so this allows the $h$-vector to grow as slowly as possible.  By Theorem \ref{BGM k k thm}, since the degree of the curve is $k$ and the curve is reduced, it follows that 
\[
|Z_1 | \geq 1+2+ \dots + (k-1) + k + k + \dots + k + (k-1)
\]
where the two $(k-1)$'s occur in degrees $k-2$ and $n+1$.   The result is then an easy calculation.  The same idea gives the second result.
\end{proof}

This leads to the next reduction.

\begin{equation}\label{assume3}
\begin{array}{l}
\hbox{{\it From now on we assume that $J$, and hence $I_Z$, has no minimal generator}} \\
\hbox{{of degree $n+1$.}}
\end{array}
\end{equation}

\medskip

We believe that something like Proposition \ref{mingen} should continue to hold.

\begin{conj} \label{base locus d conj}
Assume that $[J]_n$ has a base locus consisting of a zero-dimensional scheme in $\mathbb P^{r-1}$ of degree $d$, together with the other assumptions highlighted so far in this section.   As noted above, we have $d \leq k$.  Then

\begin{itemize}

\item[(a)] $\langle [I_Z]_{\leq n+1} \rangle$ is the saturated ideal of a {reduced}
curve, $V$, of degree $d$ (not necessarily unmixed).  

\end{itemize}

\noindent Let $C$ be the unmixed one-dimensional part of $V$.  Let $Z_1$ be the subset of $Z$ lying on $C$ and let $Z_2$ be the ``residual" set.

\begin{itemize}

\item[(b)] $\langle [I_{Z_1}]_{\leq n+1} \rangle = I_C$.

\item[(c)]  $\Delta h_{Z_1}(t) = 
\Delta h_C(t), \  \hbox{for $t \leq n+1$}.  $
In particular, $\Delta h_{Z_1}(t) = d$ for all $d \leq t \leq n+1$.

\end{itemize}

\end{conj}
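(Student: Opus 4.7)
My plan is to mimic the proof of Theorem \ref{BGM k k thm} from \cite{BGM}, adapted to the almost maximal growth setting. The first step is to lift the zero-dimensional base locus of $[J]_n$ from $\mathbb{P}^{r-1}$ back to $\mathbb{P}^r$. By assumption (\ref{assume3}), $J$ has no minimal generator in degree $n+1$, so $[J]_{n+1} = S_1 \cdot [J]_n$; therefore the base loci of $[J]_n$ and $[J]_{n+1}$ coincide in $\mathbb{P}^{r-1}$ as a zero-dimensional scheme $W$ of degree $d$. Letting $J' = \langle [I_Z]_{\leq n+1}\rangle \subseteq R$ and $V \subseteq \mathbb{P}^r$ denote the scheme cut out by the saturation of $J'$, the fact that $L$ is a general linear form and that the hyperplane section of $V$ by $L$ recovers $W$ should force $V$ to have a one-dimensional component $C$ of degree exactly $d$, plus at most isolated components at the points of $Z$ lying off that curve.

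The main obstacle is to show that $C$ is reduced and that $J'$ is already saturated through degree $n+1$. In \cite{BGM} this was handled via Gotzmann's persistence theorem, which is unavailable here. My proposed strategy is to combine two tools. First, the Gruson--Lazarsfeld--Peskine regularity bound recalled in Remark \ref{GLP remark}: if $C_{\mathrm{red}}$ had degree $d' < d$, then $I_{C_{\mathrm{red}}}$ would be $m$-regular with $m$ depending only on $d'$ and the linear span of $C$, and the contribution of $C_{\mathrm{red}} \cup Z_2$ to the Hilbert function of $R/I_Z$ in degrees $n$ and $n+1$ would grow too slowly to produce the prescribed values $k$ and $k-1$. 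Second, when $d$ is small compared to $n$ (so that the regularity bound is weak), a diagram chase using multiplication by a general linear form, analogous to the Snake Lemma step in the proof of Theorem \ref{n+1 choose n}, should rule out any multiple structure by tracking the growth of $J'$ from degree $n$ to degree $n+1$ against Macaulay and Green.

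Once $C$ is known to be reduced of degree $d$, conclusions (b) and (c) should fall into place. Setting $Z_1 = Z \cap C$ and $Z_2 = Z \setminus Z_1$, the inclusion $I_C \subseteq \langle [I_{Z_1}]_{\leq n+1}\rangle$ is automatic, since in degrees $\leq n+1$ the base locus analysis above gives $[I_C]_t = [J']_t \subseteq [I_{Z_1}]_t$. The reverse inclusion follows from a Bezout-type argument once $Z_1$ is known to be sufficiently populated on each component of $C$; alternatively, one can invoke Proposition \ref{BGM cor 3.7} applied to $Z_1$ on $C$, after verifying that $\Delta h_{R/I_{Z_1}}(t) = d$ for some $t \leq n+1$. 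This numerical verification should come from splitting the Hilbert function of $Z$ into contributions from $Z_1$ and $Z_2$ via the standard exact sequence of ideals together with Corollary \ref{count pts sect 4}. Finally, (c) is immediate from $[I_{Z_1}]_t = [I_C]_t$ for $t \leq n+1$, combined with the standard fact that the $h$-vector of a reduced curve of degree $d$ is eventually constant equal to $d$, so that $\Delta h_{Z_1}(t) = \Delta h_C(t) = d$ for $d \leq t \leq n+1$.
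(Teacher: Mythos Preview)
The statement you are attempting to prove is labeled a \emph{Conjecture} in the paper, not a theorem; the authors explicitly say they ``believe that something like Proposition~\ref{mingen} should continue to hold'' and then state this as open. What follows the conjecture in the paper is not a proof but partial evidence: Proposition~\ref{first estimate} shows only that $\deg C_{\mathrm{red}} \leq k-1$ and gives lower bounds on $|Z \cap C_{\mathrm{red}}|$, working throughout with the \emph{reduction} $(C')_{\mathrm{red}}$ rather than establishing that $C'$ is already reduced. So there is no proof in the paper to compare against.

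Your outline does not close the gap the authors left open. The crucial step is reducedness of $C$ and saturation of $\langle [I_Z]_{\leq n+1}\rangle$, and your two proposed tools do not deliver this. The GLP argument you sketch would bound the contribution of $C_{\mathrm{red}} \cup Z_2$ to the $h$-vector, but since $d$ can be any value from $1$ to $k$ (Proposition~\ref{aaaaa}) and $Z_2$ may contain arbitrarily many points, there is no contradiction between a small $d' = \deg C_{\mathrm{red}}$ and the prescribed values $k, k-1$; the slack is absorbed by $Z_2$. The ``Snake Lemma step analogous to Theorem~\ref{n+1 choose n}'' is not specified enough to evaluate, and nothing in that proof detects multiple structure on a base locus. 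Finally, your argument for (b) is circular: you invoke Proposition~\ref{BGM cor 3.7} ``after verifying that $\Delta h_{R/I_{Z_1}}(t) = d$ for some $t \leq n+1$,'' but that verification is essentially (c), which you then deduce from (b). The paper's own Proposition~\ref{first estimate} obtains its point count by a different, more delicate construction (adding hyperplane sections of $C$ and auxiliary points until maximal growth is restored), and even that does not yield the full conjecture.
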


\begin{quest} \label{pts on plane question}
{\it We will see in section \ref{k k-1 BPF} that in the situation where $[J]_n$ is basepoint free, many points of $Z$ have to lie on a plane.  If the above conjecture is true, does it also follow that many points of $Z_2$ have to lie on a plane?  We do not have much information about the Hilbert function of $Z_2$, even conjecturally.}
\end{quest}

\begin{lem}\label{Lemma1} Every $0$-dimensional subscheme $Z$
of length $d$ in $\pp^r$ satisfies \linebreak $H^1\ii_Z(d-1)=0$.
\end{lem}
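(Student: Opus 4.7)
My plan is to reduce the cohomological statement to a purely numerical statement about the Hilbert function and then use the standard fact that the Hilbert function of a zero-dimensional scheme is strictly increasing until it reaches the length.

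First, I would use the ideal sheaf sequence $0 \to \mathcal{I}_Z(t) \to \mathcal{O}_{\pp^r}(t) \to \mathcal{O}_Z(t) \to 0$. For $t \geq 0$ the middle term has no $H^1$, and $H^0(\mathcal{O}_Z(t))$ has dimension $d$ since $Z$ is zero-dimensional. Taking cohomology and comparing dimensions yields the well-known identity
\[
\dim H^1(\mathcal{I}_Z(t)) = d - h_{R/I_Z}(t) \qquad \text{for all } t \geq 0.
\]
So it suffices to prove that $h_{R/I_Z}(d-1) = d$.

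Next, I would show that the Hilbert function $h_{R/I_Z}$ is strictly increasing until it reaches the value $d$. Choose a general linear form $L \in [R]_1$; since $I_Z$ can be taken saturated and $Z$ is zero-dimensional, $L$ is a nonzerodivisor on $R/I_Z$. Then the short exact sequence
\[
0 \to (R/I_Z)(-1) \xrightarrow{\,\times L\,} R/I_Z \to R/(I_Z,L) \to 0
\]
gives $h_{R/(I_Z,L)}(t+1) = h_{R/I_Z}(t+1) - h_{R/I_Z}(t) \geq 0$. If equality $h_{R/I_Z}(t+1) = h_{R/I_Z}(t)$ ever held for some $t$, then the artinian algebra $R/(I_Z,L)$ would have Hilbert function zero in degree $t+1$, hence zero in every higher degree, forcing $h_{R/I_Z}$ to be constant from degree $t$ onwards; but then its stable value would be $\deg Z = d$, contradicting the hypothesis that the value at $t$ was less than $d$.

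Finally, combining these two observations: $h_{R/I_Z}(0)=1$ and $h_{R/I_Z}$ increases by at least one each step until it attains $d$, so $h_{R/I_Z}(d-1) \geq d$. Together with the obvious upper bound $h_{R/I_Z}(t)\leq d$, we conclude $h_{R/I_Z}(d-1)=d$, and hence $H^1(\mathcal{I}_Z(d-1)) = 0$. There is no real obstacle here beyond invoking the non-zerodivisor property of a general linear form on the saturated ideal of a zero-dimensional scheme; the result is essentially a routine consequence.
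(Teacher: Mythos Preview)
Your proof is correct and is essentially the same argument as the paper's. The paper phrases it in terms of the $h$-vector of $Z$ (the Hilbert function of the artinian reduction $R/(I_Z,L)$), observing that once a value is zero all later values are zero, so the $h$-vector is at worst $1,1,\dots,1,0,\dots$ and hence vanishes by degree $d$; this is exactly your ``strictly increasing until it reaches $d$'' statement, and your explicit cohomological translation $\dim H^1(\mathcal I_Z(t)) = d - h_{R/I_Z}(t)$ just unpacks what the paper leaves as ``the conclusion follows.''
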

\begin{proof} Consider the $h$-vector, $h$, of $Z$. Since the $h$-vector is the Hilbert function of $R/(I_Z,L)$ for a general linear form $L$, it follows that if $h(i) = 0$ for some $i$ then $h(i+j)=0$ for all $j>0$.
But then $h$ is, at worst,
of the form $1\ 1\ 1\ ... 1\ 0\ ...$ so that $h(d)=0$, and the conclusion follows.
\end{proof}

By regularity, it follows that every $0$-dimensional subscheme $Z$
of length $d$ has a homogeneous ideal generated in degree $\leq d$.

\begin{lem}\label{Lemma2} Given an unmixed reduced curve $C$ of degree $d$ 
(even reducible) in $\pp^r$, then $H^1\ii_C(d-1)=0$.
\end{lem}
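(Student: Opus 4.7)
The plan is to invoke the Gruson--Lazarsfeld--Peskine regularity result that the authors already recorded in Remark \ref{GLP remark}. The regularity estimate of \cite{GLP} applies to reduced (possibly reducible) curves, and the assertion $H^1(\mathcal{I}_C(d-1))=0$ is equivalent to saying $C$ is $d$-regular for the purpose of the $H^1$-vanishing in degree $d-1$. So the entire proof reduces to showing that the bound $\sum m_i$ given in Remark \ref{GLP remark}(b) satisfies $\sum m_i \leq d$.

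First I would write $C = X_1 \cup \cdots \cup X_s$ as a union of irreducible components, each $X_i$ reduced of degree $d_i$, spanning a linear space $\mathbb{P}^{n_i}\subset \mathbb{P}^r$, and note that since $C$ is reduced and unmixed we have $d = \sum d_i$. For each irreducible component, I claim $m_i \leq d_i$:
\begin{itemize}
\item If $X_i$ is a line, then $m_i = 1 = d_i$ by the definition of $m_i$ given in the Remark.
\item If $d_i \geq 2$, then $X_i$ cannot be contained in a $\mathbb{P}^1$ (a 1-dimensional reduced irreducible subscheme of $\mathbb{P}^1$ is $\mathbb{P}^1$ itself, of degree $1$), so $n_i \geq 2$; hence $m_i = d_i + 2 - n_i \leq d_i$.
\end{itemize}
Summing these inequalities over $i$ gives $\sum_i m_i \leq \sum_i d_i = d$, so by Remark \ref{GLP remark}(b) the curve $C$ is $d$-regular. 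The definition of $d$-regularity recalled in Remark \ref{GLP remark} says $H^1(\mathcal{I}_C(d-1))=0$, which is exactly the conclusion.

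I do not expect any real obstacle here; the content of the lemma is essentially the bookkeeping needed to turn the GLP bound into the clean inequality $\sum m_i \le d$. The only step one has to be careful about is the edge case of a line component (handled by the definition of $m_i$) and the fact that an irreducible component of degree $\geq 2$ must span at least a $\mathbb{P}^2$, which is where the hypothesis that $C$ is a curve (and each $X_i$ is one-dimensional) is used.
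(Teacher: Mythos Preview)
Your proposal is correct and is exactly the argument the paper has in mind: the paper's own proof simply reads ``Well known. See for instance Remark \ref{GLP remark},'' and you have supplied precisely the bookkeeping (showing $\sum m_i \le \sum d_i = d$, hence $C$ is $d$-regular) that this reference is pointing to.
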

\begin{proof} Well known.  See for instance Remark \ref{GLP remark}.
\end{proof}

Here is the main new fact.

\begin{prop} \label{first estimate}
Assume that the  $h$-vector of $R/I_Z$ 
has the form $(1,h_1, \dots, k,k-1,0)$, where the $k$ occurs in
degree $n \geq k \geq 2$. Assume that $J$, and hence $I_Z$, has no minimal generator
of degree $n+1$. Assume that $[J]_n$ has a base locus consisting of a zero-dimensional scheme
in $\pp^{r-1}$ of degree $d'$. Call $C'$ the union of all positive
dimensional parts of the saturation of  $\langle [I_Z]_{\leq n} \rangle$, so $\deg C' = d'$.
Let $C=(C')_{red}$ and assume that $\deg C = d$.

Then $C$ has dimension $1$ and degree $d\leq k-1$.  Furthermore, 

\begin{itemize}

\item if $k-1-d \leq d+1$ then $C$ contains at least $\displaystyle 2 \cdot \binom{d}{2} + 5d$ points of $Z$;

\item If $d+1 \leq k-1-d$ then $C$ contains at least $d^2 + d(n-k+4)$ points of $Z$

\end{itemize}

\end{prop}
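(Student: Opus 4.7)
The plan divides into three pieces: the dimension of $C$, the bound $d\le k-1$, and the two counting estimates.

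That $\dim C = 1$ is built into the set-up: by hypothesis $[J]_n$ has a non-empty zero-dimensional base locus $W\subset \pp^{r-1}$ of degree $d'\ge 1$, so by the generality of the linear form defining the artinian reduction, the positive-dimensional part $C'$ of the saturation of $\langle[I_Z]_{\le n}\rangle$ is a one-dimensional scheme of degree $d'$, and $C=(C')_{\mathrm{red}}$ is a reduced curve of degree $d\le d'$.

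To prove $d\le k-1$ I would rule out $d'=k$. Lemma~\ref{elim hi dim} gives $d'\le k$. Since $\deg W = d'\le k\le n$, Lemma~\ref{Lemma1} yields $h_{S/I_W}(t)=d'$ for every $t\ge d'-1$, so
\[
\dim[I_W]_n=\binom{n+r-1}{n}-d',\qquad \dim[I_W]_{n+1}=\binom{n+r}{n+1}-d'.
\]
The inclusion $[J]_n\subseteq [I_W]_n$ together with $\dim[J]_n=\binom{n+r-1}{n}-k$ forces $d'\le k$ with equality iff $[J]_n=[I_W]_n$. Suppose $d'=k$. Then $[J]_n=[I_W]_n$ and, using the hypothesis that $J$ has no minimal generator in degree $n+1$,
\[
[J]_{n+1}=S_1\cdot[J]_n=S_1\cdot[I_W]_n\subseteq[I_W]_{n+1},
\]
which gives $h_{S/J}(n+1)\ge d'=k$, contradicting $h_{S/J}(n+1)=k-1$. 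Hence $d\le d'\le k-1$.

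For the point count the plan is to extract a lower bound on the $h$-vector of $Z_1:=Z\cap C$. The key input is the chain $[I_Z]_t\subseteq[I_{C'}]_t\subseteq[I_C]_t$ valid for all $t\le n+1$: the first inclusion holds by the definition of $C'$ for $t\le n$ and extends to $t=n+1$ because the no-minimal-generator hypothesis gives $[I_Z]_{n+1}=R_1\cdot[I_Z]_n$; the second holds because $C\subseteq C'$ as schemes. Combining this with the Castelnuovo--Mumford regularity bound on the reduced curve $C$ from Remark~\ref{GLP remark}, I would establish a Davis-type decomposition $\Delta h_Z(t)=\Delta h_{Z_2}(t-d)+h_{Z_1/C}(t)$ in the range $t\le n+1$, and deduce that the $h$-vector of $Z_1$ is componentwise at least $(1,2,\dots,d-1,d,d,\dots,d,d-1,\dots,1)$ with a plateau of value $d$ whose length is dictated by the relation between $d$ and $k$. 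Case~1 ($k-1-d\le d+1$) corresponds to the decreasing tail $k-1-d,\dots,1$ filling the room below $d+1$; then the plateau can be taken of length $5$ and one sums to $2\binom{d}{2}+5d$. Case~2 ($d+1\le k-1-d$) forces the plateau to extend across $[d-1,n+1]$, summing to $d^2+d(n-k+4)$.

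The hard part will be rigorously justifying this lower bound on the $h$-vector of $Z_1$ without assuming Conjecture~\ref{base locus d conj}: in particular one cannot simply invoke $[I_{Z_1}]_{\le n+1}=[I_C]_{\le n+1}$ in the style of Theorem~\ref{BGM k k thm}. Instead one must work with partial identifications of these ideals, transferring the almost-maximal-growth condition on $h_Z$ into control on $h_{Z_1}$ by accounting separately for the residual contribution of $Z_2$, and treating the two case regimes in the statement according to whether the tail $k-1-d$ decays fast enough to be absorbed inside the initial rise of $h_C$ (case~1) or forces an additional plateau of $d$'s across the intermediate degrees (case~2).
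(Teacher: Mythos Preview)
Your argument for $d\le k-1$ is correct and in fact cleaner than the paper's: you rule out $d'=k$ directly by combining $[J]_n=[I_W]_n$ with the no-generator-in-degree-$n{+}1$ hypothesis to force $h_{S/J}(n+1)\ge k$. The paper instead obtains $d\le k-1$ only as a by-product of its point-adding construction below; your route is shorter and even yields the stronger $d'\le k-1$.

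The counting part, however, has a genuine gap. The Davis-type decomposition $\Delta h_Z(t)=\Delta h_{Z_2}(t-d)+h_{Z_1/C}(t)$ you write down does not make sense when $C$ is not a hypersurface (the shift by $d$ in Davis's theorem comes from dividing by a degree-$d$ form, which is unavailable here), and the heuristic $h$-vector shapes you sketch do not in fact sum to the stated bounds: for instance a plateau of $d$'s across $[d-1,n+1]$ gives $\binom{d}{2}+d(n-d+3)$, not $d^2+d(n-k+4)$. More fundamentally, you acknowledge that establishing your lower bound on the $h$-vector of $Z_1$ amounts to proving Conjecture~\ref{base locus d conj}, and your fallback plan of ``partial identifications'' is not a concrete strategy.

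The paper avoids this obstacle entirely by an iterative point-adding construction rather than by analysing $h_{Z_1}$ directly. One takes a general hyperplane section $L_1\cap C$ and forms $Z'=Z\cup(L_1\cap C)$; a separation argument shows the $h$-vector of $Z'$ ends $\ldots,k,k-1,d,0$. If this already exhibits maximal growth $(d,d)$ one applies Theorem~\ref{BGM k k thm} to $Z'$ and subtracts the $d$ added points. Otherwise one adds $q_1$ further points on $C$ to kill the part of $[I_{Z'}]_{n+2}$ not vanishing on $C$, takes another hyperplane section, and repeats. The two technical facts driving the bounds are that the correction numbers $q_i$ are strictly decreasing until they hit $0$ (else one would get maximal growth at a value $>d$, impossible since the base locus has degree $d$), and that $q_{d+1}=0$ (via the regularity of $C$ from Remark~\ref{GLP remark}). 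Hence the process terminates after at most $m\le\min\{k-1-d,\,d+1\}$ steps; applying Theorem~\ref{BGM k k thm} to the final $Z'_m$ and subtracting the at most $md+\sum q_i$ added points yields the two stated estimates. This mechanism --- reaching a genuine maximal-growth situation by enlarging $Z$ in a controlled way and then invoking \cite{BGM} --- is the missing idea in your proposal.
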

\begin{proof}
The fact that $C'$ is a (possibly non reduced) curve of degree $d'$ contained in the base locus of $[I_Z]_n$ 
follows at once from the fact that $L$ is a general hyperplane.

Call $Z_1$ the subset of $Z$ lying on $C'$. Notice that indeed $Z_1$ lies on
$C$. Define $Z-C=Z_2$, where $Z-C$ is defined by the homogeneous ideal $I_Z : I_C$.  Since $Z$ is reduced, we have $Z = Z_1 \cup Z_2$.

Since $I_Z$ has no minimal generators in degree $n+1$, we have for any $i \leq n+1$ that 
\[
[I_Z]_{i} \subseteq [I_{C \cup Z_2}]_{i} = [I_{C \cup Z}]_{i} \subseteq [I_Z]_{i},
\]
so all of these vector spaces are equal.  In particular, $I_Z$ coincides up to degree $n+1$ with $I_{C\cup Z} = I_{C \cup Z_2}$.

Let $L_1 \cong \mathbb P^{r-1}$ be a general hyperplane and call $W$ the intersection
of $L_1$ and $C$. Note first that the $h$-vector of $Z'=Z\cup W$
coincides with the $h$-vector of $Z$ for $i=0,\dots,n+1$. Indeed
$Z'$ sits in the base  locus of $\langle [I_Z]_{\leq n+1} \rangle$.
In particular the $h$-vector of $Z'$ is $k-1$ in degree $n+1$.

We claim that the $h$-vector of $Z'$ is $0$ in degree $n+3$,
i.e. that $Z'$ is separated by forms of degree $n+2$.
To see this, note that the sequence
$$
0 \to H^0 \ii_Z(n+1)\stackrel{\times L_1}{\longrightarrow} H^0 \ii_Z(n+2) \to  H^0\oo_{L_1}(n+2)
\to 0
$$
is exact (identifying $L_1$ with $\mathbb P^{r-1}$), because the $h$-vector of $Z$ is $0$ in degree $n+2$,
hence $H^1\ii_Z(n+1)=0$. With this in mind, we
can prove that if $Y$ is any subscheme of
$Z'$ whose length is the length of $Z'$ minus $1$, then  
there exists a form $f$ of degree $n+2$ passing through $Y$ and not
through $Z'$.  

\medskip

\noindent $\blacktriangleright$ If $Z'-Y$ is supported in $Z$, then
one takes a form $f'$ of degree $n+1$ passing through $Z \cap Y$
(it exists because $Z$ is separated in degree $n+1$) and takes
$f=f'L_1$.

\medskip

\noindent $\blacktriangleright$ If $Z'-Y$ is supported in $L_1$, then one takes 
one form of degree $n+2$ in $L_1=\pp^{r-1}$ which passes through
$Y\cap W$ and not through $W$ (it exists, by Lemma \ref{Lemma1}) 
and lifts it to a form $f$ containing $Z$, via the surjection
\[
H^0 \ii_Z(n+2) \to  H^0\oo_{L_1}(n+2)\to 0.
\]

\medskip

\noindent It follows that the $h$-vector of $Z'$ is 
$k, k-1 , d , 0$ in degrees $n, n+1, n+2, n+3$
respectively.
This proves in particular that $d\leq k-1$. 

If $d=k-1$, we apply Theorem  \ref{BGM k k thm} 
to $Z'$ and prove that $C$ has degree $d=k-1$. 
Moreover $C$ contains a subscheme of $Z'$ of length at least 
$(k-1)(n+2-(k-1))$. Since the length of $Z' - Z$ is $d=k-1$, the
conclusion follows. 

Assume then that $d<k-1$.  By construction 
the base locus of $[I_{Z'}]_{n+1}$ contains $C$.  We do not know if the base locus of $[I_{Z'}]_{n+2}$ contains $C$, so we have to consider both possibilities. 

\medskip

\noindent {\bf Case 1:} {\it The base locus of $[I_{Z'}]_{n+2}$ does not contain $C$.}    Call $q_1$ the dimension of
the vector space 
\[
[I_{Z'}]_{n+2}/ ([I_{Z'}]_{n+2})\cap [I_C]_{n+2}).
\]
We would like to add to $Z'$ a set of $q_1$ ``general'' points of $C$.  From a geometric point of view, the significance of the number $q_1$  is the following.  

\begin{quotation}

{\it This number $q_{1}$ is the number of points on $C$ that we need to add to \linebreak $Z \cup (L_{1}\cap C)$   (where $L_{1}$ is a general plane) in order to get a set $Z'_{1}$ such that all the elements of $H^0\mathcal I_{Z'_{1}}(n+2)$ contain $C$.}

\end{quotation}

\noindent We note here that the numbers $q_i$ introduced below have an analogous geometric meaning:  $q_i$ is the number of points on $C$ that we need to add to $Z'_{i-1} \cup (L_i \cap C)$ in order to get a set $Z'_i$ such that all elements of $H^0 \mathcal I_{Z'_i}(n+1+i)$ contain $C$.

The first problem is what this means if $C$ is not irreducible.  This will be resolved by the following procedure.  Let $\{ F_1,\dots,F_m \}$ be a basis for $([I_{Z'}]_{n+2} \cap [I_C]_{n+2})$.  We have that $\dim [I_{Z'}]_{n+2} = m + q_1$, so we can complete this to a basis for $[I_{Z'}]_{n+2}$ as follows.  It is possible that $[I_{Z'}]_{n+2}$ has a one-dimensional base locus, $C_1$, whose support is a union of some (but not all) components of $C$.  Choose a component of $C$ that is not a component of $C_1$, and let $P_1$ be a general point of this component and let $A_1 = Z' \cup \{P_1\}$.  Clearly $\dim [I_{A_1}]_{n+2} = m +q_1 -1$.  Applying the same procedure but with $A_1$ instead of $Z'$, we obtain a point $P_2$ on some component of $C$ and form $A_2 = Z' \cup \{P_1,P_2\}$ with $\dim [I_{A_2}]_{n+2} = m+q_1-2$.  Continuing in this way, we obtain a set of points $\{P_1,\dots,P_{q_1}\}$ so that if $Z_1' = Z' \cup \{P_1,\dots,P_{q_1}\}$ then $\dim [I_{Z_1'}]_{n+2} = \dim [I_{Z'}]_{n+2} - q_1$.  Since $\dim[I_{Z'_1}]_{n+1} = \dim [I_{Z'}]_{n+1}$ (the new points all lie in the base locus), we see that the length of the resulting scheme $Z'_1$ is length$(Z)+d+q_1$,  its $h$-vector is equal to the $h$-vector of $Z$ up to degree $n+1$, and the $h$-vector has value $d+q_1$ in degree $n+2$.

It follows that $Z'_1$ is separated in degree $n+2$ and it has
length $d+q_1+$length$(Z)$. Moreover $d+q_1\leq k-1$.
If $d+q_1=k-1$, the $h$-vector of $Z'_1$ ends with $k-1, k-1$
in degrees $n+1, n+2$. As above, we can apply
Theorem  \ref{BGM k k thm} and find that 
$C$ has degree $d=k-1$, and contains a subset of $Z'_1$
of length at least $(k-1)(n+2-(k-1))$. 
Since the length of $Z'_1 - Z$ is $d+q_1=k-1$, the
conclusion follows.  
{ As above, we can apply Theorem 4.1.  We obtain that $I_{Z_1'}$ has no minimal 
generator in degree $n+2$, and that the base locus of the components in degrees
$n+1$ and $n+2$ is a curve of degree $k-1$.  Since we also assumed that there
is no minimal generator in degree $n+1$, we get that this curve is $C$, and that
hence $d = k-1$ and $q_1 = 0$ (we get a little bit more: we get that the curve is $C'$, i.e. $C'$ is reduced).  The estimate for the number of points of $Z$ that 
lie on $C$ will be carried out at the end of the proof}.

Assume $d+q_1<k-1$. Fix another general hyperplane $L_2$
and repeat the procedure, i.e. add to
$Z'_1$ the set $L_2\cap C$ and, if the dimension $q_2$ of
$$[I_{Z'_1\cup(L_2\cap C)}]_{n+3}/ ([I_{Z'_1\cup(L_2\cap C)}]_{n+3}
\cap [I_C]_{n+3})$$
is non-zero, also add a set of $q_2$ ``general'' points on $C$ as above.
Call $Z'_2$ the resulting set. As above, we have a surjection
$$H^0 \ii_{Z'_1}(n+3) \to  H^0\oo_{L_2}(n+3)\to 0$$
which guarantees that the $h$-vector of $Z'_2$ ends with
$k-1, d+q_1, d+q_2, 0$ in degrees $n+1, n+2, n+3 , n+4$
respectively.

We continue in the same way obtaining, after $i$ steps,
a $0$-dimensional scheme $Z'_i$ whose $h$-vector ends
with $k-1, d+q_1, \dots, d+q_i, 0$ in degrees 
$n+1, n+2, \dots, n+i+1 , n+i+2$
respectively.

We now claim that the $q_i$ are strictly decreasing until they reach the value 0.  If this were not the case, then we would have two consecutive degrees in which the first difference of the Hilbert function of our set of points $Z_i'$ would have the same value $d+q_i$.  This forces the base locus of $[I_{Z_i'}]_{n+1+i}$ to contain a curve of degree $d+q_i > d$.  But the artinian reduction of $R/I_{Z_i'}$ agrees with that of $R/I_Z$ in degrees $n$ and $n+1$, and this base locus has only degree $d$.  The base locus cannot grow as the degree increases, so this is impossible.

Thus the process stops after $m$ steps, where $m$ is the first value of $i$ such that $q_i= 0$, since then $q_i = 0$ for all $i > m$ as well, and so we have maximal growth. 

\medskip

\noindent \underline{Claim}: {\it $m \leq \min \{ k-1-d , d+1 \}$.}

\medskip

The fact that the number of steps is at most $k-1-d$ is clear: since $d + q_1 < k-1$, we get $q_1 \leq k-d-2$ so there are at most $k-d-2$ non-zero $q_i$, plus $q_m = 0$.  We will show that in fact $q_{d+1} = q_{d+2} = 0$.
To see this, note first that $C \cup Z_i' = C \cup Z_2$ for any $i$, since the added points all lie on $C$, and recall that  $Z_2$ is separated in degree $n+1$.
It follows that the sequence
\[
0\to H^0\ii_{C\cup Z_2}(d+n+1)\to H^0\ii_C(d+n+1)\to H^0\oo_{Z_2}\to 0
\]
is exact.  Indeed, the only issue is the surjectivity of the last map.  But if $P$ is a point of $Z_2$ then there is a form $F$ of degree $n+1$ vanishing at the remaining points of $Z_2$ but not at $P$, and since $I_C$ is generated in degree $\leq d$ there is a form $G$ of degree $d$ vanishing on $C$ but not at $P$.  The product $FG$ demonstrates the desired surjectivity.  

As a consequence, we have 
\[
H^1 \mathcal I_{C \cup Z_2}(d+n+1) \subseteq H^1 \mathcal I_C(d+n+1) = 0.
\]
Hence the sequence
\[
0 \rightarrow H^0 \mathcal I_{C\cup Z_2}(d+n+1) \to H^0 \mathcal I_{Z_2}(d+n+1) \to H^0\mathcal O_C(d+n+1)\to 0
\]
is exact.
It also means that for any $i$ the map 
\[
H^0 \mathcal I_{Z'_{i}}(d+n+1) \to H^0\mathcal I_{D|C}(d+n+1)
\]
surjects, where $D$ is the set on $C$ given by $C\cap Z'_{i} $.  Of course $H^0\mathcal I_{D|C}(d+n+1)$ may be zero, and certainly is zero for large $i$.

Now we prove that $q_{d+1}$ is $0$.  As noted above, this number $q_{d+1}$ is the number of points on $C$ that we need to add to $Z'_{d} \cup (L_{d+1}\cap C)$   (where $L_{d+1}$ is a general plane) in order to get a set $Z'_{d+1}$ such that  the elements of $H^0\mathcal I_{Z'_{d+1}}(n+d+2)$  all contain $C$.
So, if we can prove that $Z'_{d} \cup (L_{d+1}\cap C)$ has this property (i.e. surfaces of degree $n+d+2$ through $Z'_{d} \cup (L_{d+1}\cap C)$  all contain $C$), then we must have $q_{d+1}=0$.

To see this, suppose to the contrary that there exists a surface $F$ of degree $n+d+2$ passing through $Z'_{d} \cup (L_{d+1}\cap C)$ and not containing $C$. Then $F$ cuts a divisor $E$ in $\mathcal O_C(n+d+2)$ which contains $D = C\cap Z'_{d} $ and $L_{d+1}\cap C$.  Then $E' =  E - (L_{d+1}\cap C)$ is a non-trivial divisor in $O_C(n+d+1)$ containing $D$. Thus, by the surjection above, $E'$ can be lifted to a surface $F'$ of degree $d+n+1$ which contains $D$ and $Z_2$, i.e. $F'$ contains $Z'_{d}$ and does not contain C.  This is impossible by the construction of $Z'_{d}$.

Thus $q_{d+1} = 0$.  In the same way we have $q_{d+2} = 0$, i.e. $Z'_{d+2}$ has an $h$-vector which ends in degree $d+n+3$ and has value $d$ in degrees $d+n+2$ and $d+n+3$.

We now give the estimate for the number of points of $Z$ lying on $C$ in Case 1.  We know that after $m$ steps the one-dimensional part of the base locus is already $C$, without adding points, and that thus in degree $n+1+m$, the decomposition of $Z_m'$ according to Theorem \ref{BGM k k thm} holds.

The total number of points of $Z_m'$ that lie on $C$ is minimized in the case that $C$ is a plane curve, so that the $h$-vector grows as slowly as possible to reach the value $d$. Thus the number of points of $Z_m'$ that lie on $C$ is at least
\[
(1 + 2 + \dots + (d-1) + d + d + \dots + d 
\]
where the final $d$ occurs in degree $n+1+m$.  Thus $Z_m'$ contains at least 
\[
\binom{d}{2} + d[ (n+1+m) - (d-2)] = \binom{d}{2} + d(n+m-d+3).
\]
To obtain $Z_m'$ we added to $Z$ a total of $md + (q_1 + q_2 + \dots + q_m)$ points (where $q_m = 0$).  Bearing in mind that the $q_i$ are strictly decreasing as long as they are positive, and that  $q_1 \leq k-2-d$, we have the following lower bound for the cardinality of the original set, $Z_1$, of points lying on $C$.
\[
|Z_1| \geq \binom{d}{2} + d(n+m-d+3) - \left [ md + (q_1 + \dots + q_m) \right ] = \binom{d}{2} + d(n-d+3) - [q_1 + \dots + q_m].
\]

By the above claim, we have $m \leq \min \{ k-1-d,d+1 \}$, where $m$ is the smallest index such that $q_m = 0$.  This gives us two possibilities.

Assume first that $m \leq k-1-d \leq d+1$, i.e. $k \leq 2d+2$.  Notice that this implies $d- (k-2-d) \geq 0$, and recall also that $k \leq n$ and that $q_1 \leq k-2-d$.  Then 
\[
\begin{array}{rcl}
|Z_1| & \geq & \displaystyle \binom{d}{2} + d(n-d+3) - [1 + 2 + \dots + (k-2-d)] \\ \\
& \geq &  \displaystyle \binom{d}{2} + d(k-d+3) - [1 + 2 + \dots + (k-2-d)] \\ \\
& \geq & \displaystyle \binom{d}{2} + \left [ (d-1) + (d-2) + \dots + (d - (k-2-d)) \right ] + 5d \\ \\
\end{array}
\]
\[
\begin{array}{rcl}
& \geq & \displaystyle \binom{d}{2} + \left [ (d-1) + (d-2) + \dots +1 +0 \right ] + 5d   \\ \\
& = & \displaystyle  \binom{d}{2} + \binom{d}{2} + 5d \\ \\
& = & \displaystyle 2 \cdot \binom{d}{2} + 5d.
\end{array}
\]

Now assume that $m \leq d+1 \leq k-1-d$, i.e. $k \geq 2d+2$.  We have that $q_{d+1} = 0$, so we maximize $q_1 + \dots + q_m$ by $(k-2-d) + \dots + (k-2d-1)$ since the $q_i$ are strictly decreasing and there are at most $d$ non-zero ones.
\[
\begin{array}{rcl}
|Z_1| & \geq & \displaystyle \binom{d}{2} + d(n-d+3) - \left [ (k-2-d) + \dots + (k-2d-1)  \right ] \\ \\
& = & \displaystyle \binom{d}{2} + d(n-d+3) - \frac{d}{2} [ 2k-3d+-3] \\ \\
& = & d^2 + d (n-k+4)

\end{array}
\]

\bigskip

\noindent {\bf Case 2:} {\it The base locus of $[I_{Z'}]_{n+2}$ contains $C$.}   In this case $q_1$ and $q_2$ are both zero, so the computation is as before but easier.  Clearly in this case there are more points than in Case 1, so the claimed bound still holds.
\end{proof}


\section{Growth type $(\dots, k, k-1)$: the basepoint free case} \label{k k-1 BPF}

We continue to assume that $\dim [S/J]_n = k \geq 2$ and $\dim [S/J]_{n+1} = k-1$. The other possibility allowed by Lemma \ref{elim hi dim} is that the linear systems $|[J]_{n}|$ and $|[J]_{n+1}|$ are basepoint free,  i.e. that $\langle [J]_{\leq n} \rangle$ is artinian (hence also $\langle [J]_{\leq n+1} \rangle$ is artinian):

\begin{equation}\label{assume4}
\begin{array}{l}
\hbox{{\it From now on we assume that the degree $n$ component of $J$ has no base locus.}} \\
\hbox{{\it Hence the same is true also in degree $n+1$.}}
\end{array}
\end{equation}

\medskip

Our goal in this section is to prove the following theorem.

\begin{thm} \label{no base locus}
Assume that $[J]_n$ and $[J]_{n+1}$ are base point free, together with the other assumptions highlighted so far in this section.  Then there is a distinguished plane $\Lambda$ such that if $Z_1$ is the subset of $Z$ lying on $\Lambda$ and $Z_2$ is the subset of $Z$ lying off $\Lambda$ then 
\[
\Delta h_{R/I_{Z_1}}(t) = \Delta h_{R/I_Z}(t) \hbox{ \ \ \ for } t \geq n \hbox{ \ \ \ \ \ and \ \ \ \ \ } \Delta h_{R/I_{Z_2}}(t) = 0 \hbox{ \ \ \ for } t \geq n-1.
\]

\end{thm}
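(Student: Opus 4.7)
The plan is to combine Green's theorem, applied in the tight regime $k\leq n$, with an analysis of kernel lines to locate a distinguished element of $[S/J]_n$ whose annihilator cuts out the plane $\Lambda$.

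First, I would observe that since $k\leq n$, the $n$-binomial expansion of $k$ is $k=\binom{n}{n}+\binom{n-1}{n-1}+\cdots+\binom{n-k+1}{n-k+1}$, so $(k)_{\langle n\rangle}=0$, and the analogous computation in degree $n+1$ (using $k-1\leq n-1$) gives $(k-1)_{\langle n+1\rangle}=0$. Green's theorem therefore forces
\[
h_{S/(J,\ell)}(n)=0=h_{S/(J,\ell)}(n+1)
\]
for every general linear form $\ell\in S_1$. Equivalently, multiplication by $\ell$ is surjective both from $[S/J]_{n-1}$ onto $[S/J]_n$ and from $[S/J]_n$ onto $[S/J]_{n+1}$, and the latter map has kernel of dimension exactly $1$. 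So for each general $\ell$ there is a unique class $[\bar f_{\ell}]\in \mathbb{P}([S/J]_n)$ with $\ell\cdot\bar f_{\ell}=0$ in $[S/J]_{n+1}$.

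I would then study the rational map $\alpha\colon\mathbb{P}(S_1)\dashrightarrow\mathbb{P}([S/J]_n)$, $[\ell]\mapsto [\bar f_{\ell}]$, whose fibre over $[\bar f]$ is $\mathbb{P}(\mathrm{Ann}_{S_1}(\bar f))$. The goal is to exhibit a single $\bar f$ with $\dim \mathrm{Ann}_{S_1}(\bar f)\geq r-2$; lifting such an annihilator to $R_1$ and adjoining the hyperplane $L$ used for the artinian reduction produces $r-1$ linearly independent linear forms on $\mathbb{P}^r$ whose common zero locus is the desired plane $\Lambda$. The basepoint-free hypothesis enters by constraining the image of $\alpha$: an image of maximal dimension would, combined with the persistence provided by the absence of minimal generators of $I_Z$ in degree $n+1$ (so that $[J]_{n+1}=S_1\cdot [J]_n$), force some class in $\mathbb{P}([S/J]_n)$ to evaluate to zero at a common point of $\mathbb{P}^{r-1}$, contradicting the assumption. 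The plan is therefore to deduce that $\dim\,\mathrm{Image}(\alpha)\leq 2$, so that the generic fibre of $\alpha$ has projective dimension $\geq r-3$ and supplies the sought-after $\bar f$.

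Once $\Lambda$ is in hand, I would set $Z_1=Z\cap\Lambda$ and $Z_2=Z\setminus\Lambda$, and analyse the standard decomposition $I_{Z_2}=I_Z:I_\Lambda$. The matching $\Delta h_{R/I_{Z_1}}(t)=\Delta h_{R/I_Z}(t)$ for $t\geq n$ follows from the fact that in these degrees every form of $[I_Z]_t$ must, by the construction of $\Lambda$, already vanish on $\Lambda$ (and hence on $Z_1$), while the vanishing $\Delta h_{R/I_{Z_2}}(t)=0$ for $t\geq n-1$ is a regularity estimate extracted from the exact sequence relating $I_Z$, $I_{Z_1}$ and $I_{Z_2}$ together with the shape of the $h$-vector $(1,h_1,\dots,k,k-1,0)$ of $Z$ and the $h$-vector of $Z_1$ on the plane $\Lambda$.

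The main obstacle is the middle step: producing an $\bar f\in [S/J]_n$ with $\mathrm{Ann}_{S_1}(\bar f)$ of codimension at most $2$. When $k\leq 3$ this is automatic, since the image of $\alpha$ lies in $\mathbb{P}^{k-1}$, of dimension at most $2$, but for $k\geq 4$ one must use the basepoint-free hypothesis nontrivially, presumably by analysing the determinantal loci of the multiplication matrices $M_\ell\colon [S/J]_n\to [S/J]_{n+1}$ and showing that a dominant $\alpha$ would contradict the fact that $I_Z$ has no minimal generator in degree $n+1$. Verifying the residual $h$-vector condition on $Z_2$ is the other delicate point, and may require separating the analysis into the case where $Z_2$ meets the hyperplane $L$ generically and a limiting case, mirroring the auxiliary set-enlargement technique used in the proof of Proposition~\ref{first estimate}.
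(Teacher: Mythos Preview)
Your proposal has a genuine structural gap and a dimension error that together prevent the argument from reaching the plane $\Lambda$.

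First, the dimension count: $r-2$ linear forms in $S_1$, lifted to $R_1$ and adjoined to $L$, give $r-1$ independent linear forms on $\mathbb P^r$, whose common zero locus is a \emph{line}, not a plane. This is in fact exactly what the paper obtains at the corresponding stage: for a fixed general $L$ one gets a line $\lambda_\ell\subset H_L$. The plane $\Lambda$ does not arise from a single artinian reduction; it is produced by letting $L$ vary over general hyperplanes, showing (via the canonical module of $R/I_Z$ and a comparison of images $\times L$ versus $\times\bar\ell$) that any two such lines $\lambda_\ell,\lambda_{\ell'}$ meet, and then invoking the ``Linear Lemma'' to conclude that all of them lie in a fixed plane. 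Your construction, tied to one choice of $L$, cannot see this.

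Second, the paper's key step is not about the $1$-dimensional kernels of $\times\ell$ for general $\ell$, but about the rank dichotomy on the degeneracy locus: by a case analysis using the basepoint-free hypothesis in both degrees $n$ and $n+1$, one shows that whenever $\times\ell:[S/J]_n\to[S/J]_{n+1}$ fails to be surjective it must be the \emph{zero} map (and likewise from degree $n-1$ to $n$). Hence the $(r-2)$-dimensional space of ``bad'' $\ell$'s annihilates all of $[S/J]_n$, not just a single class $\bar f$. Your map $\alpha$ looks at the generic (surjective) locus and tries to bound its image, but you yourself flag the middle step as unresolved for $k\geq 4$; the paper bypasses this entirely via the rank dichotomy.

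Finally, your sketch of the $h$-vector conclusions is not right: it is certainly false that every form in $[I_Z]_t$ vanishes on $\Lambda$ for $t\geq n$ (that would force $h_{R/I_Z}(t)\geq\binom{t+2}{2}$). The paper instead obtains $\Delta h_{R/I_{Z_2}}(t)=0$ for $t\geq n-1$ from the fact that $\times h$ is the zero map on $[S/J]_{n-1}$ and $[S/J]_n$ for a general $h\in I_\Lambda$, and then derives $\Delta h_{R/I_{Z_1}}(t)=\Delta h_{R/I_Z}(t)$ for $t\geq n$ via a Snake Lemma computation and a saturation argument using $h^1(\mathcal I_{Z_2}(n-2))=0$.
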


\noindent This will take some preparation, and we devote the rest of this section to its proof.

We know that $S/J$ is artinian, with Hilbert function having values $k$ and $k-1$ in degrees $n$ and $n+1$ respectively, and then zero.  Clearly the dimension of the socle of $S/J$ in degree $n+1$ is $k-1$.  Suppose that $S/J$ has a non-zero socle element $f$ in degree $n$.  Then $S/(J,f)$ has Hilbert function with value $k-1$ in both degrees $n$ and $n+1$.  Thus the components of $(J,f)$ in degrees $n$ and $n+1$ have a non-empty base locus.  But in degree $n+1$, $J$ and $(J,f)$ coincide.  Thus we have a contradiction to (\ref{assume4}).

\begin{equation}\label{assume5}
\begin{array}{l}
\hbox{{\it From now on we assume that  $S/J$ has no non-zero socle in degree $n$.}} \\
\end{array}
\end{equation}

\medskip

It follows that the canonical module of $S/J$ has no minimal generators in the {\it second} degree.  Consequently, the same is true for the canonical module of $R/I_Z$:

\begin{equation}\label{assume6}
\begin{array}{l}
\hbox{{\it From now on we assume that the canonical module of $R/I_Z$ has no minimal}} \\
\hbox{{\it generator in its second degree.}}
\end{array}
\end{equation}

With the assumptions (\ref{assume3}), (\ref{assume4}), (\ref{assume5}), (\ref{assume6}) in place, we now prove the geometric consequences as described in the theorem.

  Let $\ell$ be an arbitrary (not general) linear form.  We have a multiplication 
\[
\times \ell : [S/J]_n \rightarrow [S/J]_{n+1},
\]
where these components have dimensions $k$ and $k-1$ respectively.   Choosing bases for $[S/J]_n$ and $[S/J]_{n+1}$ and finding the matrices for $\times x_1$, $\times x_2$, \dots,  $\times x_r$ with respect to these bases, we can represent the multiplication by a linear form $\ell = a_1 x_1 + a_2 x_2 + \dots + a_r x_r$ as a $(k-1) \times k$ matrix, $A$, whose entries are linear forms, $m_{i,j}$, in the dual variables $a_1,\dots,a_r$.  Let us write
\[
A = 
\left [
\begin{array}{cccccccc}
m_{1,1} & \dots & m_{1,k} \\
\vdots && \vdots \\
m_{k-1,1} & \dots & m_{k-1,k}
\end{array}
\right ]
\]
The vanishing locus of the maximal minors of this matrix corresponds to the set of  linear forms $\ell$ for which $\times \ell$ fails to have rank $k-1$.  This locus has codimension at least 2 in $\mathbb P^{r-1}$ (the dual projective space).

\begin{rem}
As an aside, suppose that $r=3$.  Notice that the expected  vanishing locus for the maximal minors of a $(k-1) \times k$ matrix of linear forms is a finite set of $\binom{k}{2}$ points in the dual projective plane, corresponding to $\binom{k}{2}$ linear forms.  If it happens that the degree $n$ component of $J$ has a base locus consisting of $k$ distinct points, no three on a line, then the linear forms that fail to give a surjectivity are exactly the $\binom{k}{2}$ lines passing through two of the points.
\end{rem}

We first note that if $\ell$ is {\it general} then $\times \ell$ is surjective.  Indeed, we have the exact sequence
\[
[S/J]_n \stackrel{\times \ell}{\longrightarrow} [S/J]_{n+1} \rightarrow [S/(J,\ell)]_{n+1} \rightarrow 0
\]
and Green's theorem (cf. Theorem \ref{gr}) together with the assumption $k \leq n$ gives that the last vector space is zero.

Now we want to describe the possible ranks for arbitrary $\ell$.  There are essentially four possibilities for the rank of $\times \ell$, a priori:

\medskip

\noindent \underline{Case 1}:  $\hbox{rk } (\times \ell) = k-1$ (i.e. is surjective) for all $\ell$.  

This is impossible.   As noted above, the vanishing locus of the maximal minors of $A$ has codimension $\leq 2$, so there is at least a finite number of points (depending also on $r$) in the dual projective space where this rank is $< k-1$.  

\bigskip

For the remaining cases, we assume that $\times \ell$ fails to have maximal rank, and we consider the exact sequence from (\ref{ses})
\begin{equation} 
0 \rightarrow [S/(J:\ell)]_{i-1} \stackrel{\times \ell}{\longrightarrow} [S/J]_i \rightarrow [S/(J,\ell)]_i \rightarrow 0
\end{equation}
for different values of $i$.  We are interested in knowing what are the consequences for $\ell$ if it fails to give a multiplication of maximal rank.  Note that $\hbox{rk }(\times \ell) = \dim [S/(J:\ell)]_{i-1}$.

\medskip

\noindent \underline{Case 2}: For $i = n+1$, $\hbox{rk } (\times \ell) = k-2$, so $\dim [(S/ (J:\ell)]_{n} = k-2$.

\medskip

We represent the relevant data from (\ref{ses}) in the following table.  Since (\ref{ses}) is a short exact sequence of graded modules, we obtain a collection of short exact sequences of vector spaces, coming from the homogeneous components of the various degrees, $i$.  Each column in the table below represents the dimensions of the corresponding vector spaces for the degree given in the top row.  By exactness, the second and third entries must add up to the first entry.

\medskip

\begin{tabular}{r|ccccccccccc}
degree $i$ & 0 & 1 & $\dots$ & $n$ & $n+1$     \\  \hline
$\dim [S/J]_i$ & 1 & $r$ & $\dots$ & $k$ & $k-1$    \\
$\dim [S/(J:\ell)]_{i-1}$ &  & 1 & \dots & $p$ & $k-2$   \\
$\dim [S/(J,\ell)]_i$ &  1 & $r-1$ & $\dots$ & $k-p$ & 1
\end{tabular}

\medskip

\noindent where $p = \dim [S/(J:\ell)]_{n-1} \leq k$. For simple Hilbert function reasons, we cannot have $0 \leq p \leq k-3$ (looking at the middle line) or $p=k$ (looking at the bottom line).  If $p = k-1$ then $S/(J,\ell)$ has maximal growth from degree $n$ to degree $n+1$, and hence the base locus of  the component of $(J,\ell)$ in degree $n$ (and $n+1$) is non-empty, meaning that the same is true of $J$, violating (\ref{assume4}).  (Notice that we do not need $\ell$ to be general in order to reach this conclusion.)  Finally, if $p = k-2$ then $S/(J:\ell)$ has maximal growth from degree $n-1$ to degree $n$, so $[J:\ell]_n$ has a base locus.  But $(J:\ell) \supset J$, so this means that $[J]_n$ has a base locus, again violating (\ref{assume4}). 

\bigskip

Having established the idea in Case 2, we combine most of the remaining cases as Case~3.

\bigskip

\noindent \underline{Case 3}: $1 \leq s = \hbox{rk } (\times \ell) \leq k-3$. 

\medskip

Then we have the following table:

\bigskip

\begin{tabular}{r|ccccccccccc}
degree $i$ & 0 & 1 & $\dots$ & $n$ & $n+1$     \\  \hline
$\dim [S/J]_i$ & 1 & $r$ & $\dots$ & $k$ & $k-1$    \\
$\dim [S/(J:\ell)]_{i-1}$ &  & 1 & \dots & $p$ & $s$   \\
$\dim [S/(J,\ell)]_i$ &  1 & $r-1$ & $\dots$ & $k-p$ & $k-1-s$
\end{tabular}

\bigskip

\noindent Now for Hilbert function reasons $p$ cannot be equal to $0,1,\dots,s-1$ (looking at the second row).  Looking at the last row, $k-p$ cannot be equal to $0,1,\dots,k-2-s$; that is, we cannot have $p \geq s+2$.  This means that we only have to consider $p=s$ and $p = s+1$.  For this we argue exactly as in Case 2.

\bigskip

\noindent \underline{Case 4}:  $\hbox{rk } (\times \ell) = 0$.  

Since the other cases have been ruled out by our assumptions, this case must hold.  This means that $\times \ell$ is the zero map on $S/J$ from degree $n$ to degree $n+1$, i.e. that $[J]_{n+1} = [(J,\ell)]_{n+1}$.  
Thus $[(I_Z,L)]_{n+1} = [(I_Z, L, \bar \ell)]_{n+1}$ for any lifting $\bar \ell$ of $\ell$ to $R$ (note that $L$ and $\bar \ell$ are independent).

We now have the table

\bigskip

\begin{tabular}{r|ccccccccccc}
degree $i$ & 0 & 1 & $\dots$ & $n$ & $n+1$     \\  \hline
$\dim [S/J]_i$ & 1 & $r$ & $\dots$ & $k$ & $k-1$    \\
$\dim [S/(J:\ell)]_{i-1}$ &  & 1 & \dots & $p$ & 0   \\
$\dim [S/(J,\ell)]_i$ &  1 & $r-1$ & $\dots$ & $k-p$ & $k-1$
\end{tabular}

\bigskip

\noindent It follows immediately that $p = 0$, i.e. that $(\times \ell)$ is also the zero map from degree $n-1$ to degree $n$, and hence that $[J]_{n} = [(J,\ell)]_{n}$.  In particular, $[J]_n$ contains $\ell \cdot [S]_{n-1}$ and $[J]_{n+1}$ contains $\ell \cdot [S]_n$.
Thus $[(I_Z,L)]_{n} = [(I_Z, L, \bar \ell)]_{n}$ for any lifting $\bar \ell$ of $\ell$ to $R$.

\medskip

We summarize these facts:

\begin{equation}\label{assume8}
\begin{array}{l}
\hbox{{\it {\rm (a)} There exists a linear form $\ell \in S = R/(L)$ such that $\times \ell : [S/J]_{n-1} \rightarrow [S/J]_n$ }} \\
\hbox{{\it and $\times \ell : [S/J]_n \rightarrow [S/J]_{n+1}$ are both the zero map.}} \\  \\

\hbox{{\it {\rm (b)} For any lifting $\bar \ell$ of $\ell$ to $R$, we have $[J]_i = [(J,\ell)]_i $, i.e. $[(I_Z,L)]_i = $ }} \\
\hbox{{\it $[(I_Z,L,\bar \ell)]_i$    for $i = n$ and $n+1$. }}

\end{array}
\end{equation}

\medskip

By duality,

\begin{equation}\label{assume9}
\begin{array}{l}
\hbox{{\it The linear form $\ell$ also annihilates the first and second components of the }} \\
\hbox{{\it canonical module of $S/J$.}}
\end{array}
\end{equation}

We now return to the matrix, $A = (m_{i,j})$, that defines the multiplication from degree $n$ to degree $n+1$.  Since $\times \ell$ is surjective for general $\ell$, $A$ drops rank in codimension 1 or codimension 2.  On the other hand, we have seen that when $\times \ell$ is not surjective, it is the zero map.  We obtain:

\begin{lem}
The ideal $\langle m_{1,1}, \dots, m_{k-1,k} \rangle$ generated by the entries of $A$ defines a codimension 2 linear variety in the dual projective space $\mathbb P^{r-1}$.  Thus up to change of variables we may assume that the $m_{i,j}$ involve only two (dual) variables.  In particular, there exist $r-2$ independent linear forms in $[S]_1$ that annihilate $[S/J]_{n-1}$ and $[S/J]_{n}$.  When $r=3$, $\ell$ is unique up to scalar multiplication.
\end{lem}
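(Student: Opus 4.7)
The plan is to show that the linear span $V$ of the matrix entries $\{m_{i,j}\}$, viewed as forms in the dual variables $a_1,\dots,a_r$, has dimension exactly $2$; once this is established, all the claims of the lemma follow immediately.

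The key input is the preceding case analysis: for every linear form $\ell$, the multiplication $\times\ell : [S/J]_n \to [S/J]_{n+1}$ is either surjective (rank $k-1$) or the zero map, since Cases $1$--$3$ were ruled out under assumption (\ref{assume4}). Consequently the locus $B \subset \mathbb P^{r-1}$ on which $\times\ell$ fails to be surjective coincides with the common zero set of \emph{all} the entries $m_{i,j}$, not merely of the $(k-1)\times(k-1)$ minors of $A$. Since the $m_{i,j}$ are linear, $B$ is a linear subvariety of $\mathbb P^{r-1}$, and its codimension equals $\dim V$.

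For $\dim V \leq 2$: by the Macaulay height bound (or Eagon--Northcott), the ideal of maximal minors of a $(k-1) \times k$ matrix has height at most $k-(k-1)+1 = 2$, so $B$, as the zero set of those minors, has codimension at most $2$. For $\dim V \geq 2$: if $V = 0$ then $A$ is identically zero, contradicting surjectivity for a general $\ell$; if $\dim V = 1$, write $m_{i,j} = c_{i,j}\alpha$ for a single nonzero linear form $\alpha$ and constants $c_{i,j}$, so $A = \alpha \cdot C$ for a fixed nonzero constant $(k-1)\times k$ matrix $C$. Since a general $\ell$ gives rank $k-1$, we have $\mathrm{rk}(C) = k-1$; let $v$ span the one-dimensional $\ker C \subset [S/J]_n$. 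Then for every $\ell \in [S]_1$, $\ell\cdot v = \alpha(\ell)\cdot(Cv) = 0$, placing $v$ in the degree-$n$ socle of $S/J$ and violating assumption (\ref{assume5}).

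Once $\dim V = 2$, a linear change in the dual variables puts every $m_{i,j}$ into just two of the $a_i$, and $B$ is cut out by two independent linear equations. Dually, $B$ corresponds to an $(r-2)$-dimensional subspace $W \subset [S]_1$ of forms $\ell$ for which $\times\ell : [S/J]_n \to [S/J]_{n+1}$ vanishes; and by the argument of Case 4, recorded in (\ref{assume8})(a), each such $\ell$ also kills $[S/J]_{n-1} \to [S/J]_n$. This produces the desired $r-2$ independent annihilating linear forms, and when $r = 3$ a codimension-$2$ linear subvariety of $\mathbb P^2$ is a single point, so $\ell$ is unique up to scalar. The main delicacy in the plan is the bound $\dim V \leq 2$ via the Macaulay height inequality on ideals of maximal minors; the reverse inequality is driven purely by the no-socle hypothesis.
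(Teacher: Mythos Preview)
Your proof is correct and follows the same overall architecture as the paper's: use the case analysis to identify the degeneracy locus $B$ with the common zero set of the entries $m_{i,j}$ (hence a linear variety), bound its codimension above by $2$ via the height bound on maximal minors, and then rule out codimension $\leq 1$.

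The one substantive difference is in how you exclude $\dim V = 1$. The paper argues directly: if all $m_{i,j}$ are multiples of a single dual form, then after a change of variables every $\ell$ in $\langle x_2,\dots,x_r\rangle$ annihilates $[S/J]_n$, so $\langle x_2,\dots,x_r\rangle\cdot[S]_{n-1}\subset [J]_n$, forcing $\dim[S/J]_n\leq 1$ and contradicting $k\geq 2$. You instead factor $A=\alpha\cdot C$ and exhibit a nonzero kernel vector $v\in\ker C$ as a degree-$n$ socle element, contradicting assumption~(\ref{assume5}). Both are valid; the paper's route uses only the hypothesis $k\geq 2$, while yours invokes the no-socle condition~(\ref{assume5}) already derived from~(\ref{assume4}). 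Your socle argument is a nice alternative and makes transparent why the no-socle hypothesis is natural here.
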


\begin{proof}
We know that the set of $\ell$ for which $\times \ell$ is the zero map is a set of codimension $\leq 2$, and it is defined by the ideal $\langle m_{1,1}, \dots, m_{k-1,k} \rangle$.  Clearly this defines a linear variety.  Suppose that this variety has codimension 1.  Then there is a linear form in the dual variables $a_1,\dots,a_r$ such that each $m_{i,j}$ is a scalar multiple of this linear form.  Changing basis if necessary, we can suppose that this linear form is $a_1$.  This means that for any linear form $\ell$ involving only $x_2,\dots,x_r$, we have $\times \ell : [S/J]_n \rightarrow [S/J]_{n+1}$ is the zero map, and the same is true replacing $n$ by $n-1$ as noted above.  This means that
\[
\langle x_2,\dots,x_r \rangle \cdot [S]_{n-1} \subset [J]_n.
\]
But this forces $\dim [S/J]_n$ and $\dim [S/J]_{n+1}$ both to be $\leq 1$, contradicting our assumption that $k > 1$.
Thus we have shown the first assertion.  
The second, third and fourth statements are then immediate. 
\end{proof}

\begin{lem} \label{images}
Let $M$ be a finitely generated graded $R$-module of depth $\geq 1$ and let $L$ be a general linear form.  Let $N = M/LM$.  Without loss of generality say that the initial degree of $M$ is 0.  Assume that there exist linear forms $\ell_1,\dots,\ell_{r-2} \in S = R/(L)$ that annihilate $N_0$ and $N_1$.  For each $i$ let $\bar \ell_i$ be any lifting of $\ell_i$ to $R$.  Then for each $i$, the image of $\times \bar \ell_i : M_0 \rightarrow M_1$ is contained inside the image of $\times L : M_0 \rightarrow M_1$, and similarly for $\times \bar \ell_i : M_1 \rightarrow M_2$.  For a general lifting $\bar \ell$, these images actually coincide.
\end{lem}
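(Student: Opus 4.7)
The plan is to exploit the hypothesis that $M$ has depth $\geq 1$: since $L$ is general, multiplication by $L$ is a non-zerodivisor on $M$, giving, in every degree $d$, a short exact sequence
\[
0 \to M_{d-1} \xrightarrow{\;\times L\;} M_d \to N_d \to 0.
\]
In particular $L M_{d-1}$ is exactly the kernel of the surjection $M_d \to N_d$.

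First I would prove the containment, which is essentially tautological. For $m \in M_0$, its class $\bar m \in N_0$ satisfies $\ell_i \bar m = 0$ in $N_1$ by hypothesis, which by the exact sequence above is precisely the statement that $\bar\ell_i \cdot m \in L M_0$. Repeating the same reasoning one degree higher gives $\bar\ell_i \cdot M_1 \subseteq L M_1$. This proves the containment for every lift $\bar\ell_i$ of $\ell_i$.

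For the equality under a general lift, I would use a small linear-algebra trick. Any lift of $\ell_i$ to $R$ has the form $\bar\ell = \bar\ell_i + c L$ with $c \in K$. Because $\times L : M_{d-1} \to M_d$ is injective, the containment just established lets me define, for $d=1,2$, a $K$-linear endomorphism $\phi_{d-1} : M_{d-1} \to M_{d-1}$ by the rule
\[
L \cdot \phi_{d-1}(m) \;=\; \bar\ell_i \cdot m \qquad (m \in M_{d-1}).
\]
Then multiplication by $\bar\ell$ maps $M_{d-1}$ onto $L \cdot (\phi_{d-1} + c \cdot \mathrm{id})(M_{d-1})$, and this equals $L \cdot M_{d-1}$ iff $\phi_{d-1} + c \cdot \mathrm{id}$ is surjective on the finite-dimensional space $M_{d-1}$, i.e.\ iff $-c$ is not an eigenvalue of $\phi_{d-1}$. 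Since $K$ is infinite, choosing $c$ outside the finite union of the spectra of $-\phi_0$ and $-\phi_1$ makes both endomorphisms invertible, and the two images coincide simultaneously in both degrees.

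The main (minor) obstacle is simply pinning down what ``general lifting'' should mean: liftings of $\ell_i$ form the one-parameter family $\bar\ell_i + K L$, and ``general'' here amounts to avoiding a finite subset of $K$, which is fine because the base field is infinite. With this convention in hand, the $\phi$-trick immediately closes the proof.
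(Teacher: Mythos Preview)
Your proof is correct and follows essentially the same approach as the paper. For the containment you both use the short exact sequence coming from $L$ being a non-zerodivisor; for the equality, the paper observes that a general member of the pencil $\langle L,\bar\ell_i\rangle$ is a non-zerodivisor on $M$ (hence injective, hence dimension count gives equality), which is exactly what your eigenvalue condition on $\phi_{d-1}$ detects degree by degree.
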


\begin{proof}

The first assertion follows from the following commutative diagram ($j = 0, 1$):
\[
\begin{array}{cccccccccc}
0 &&  0 \\
\downarrow &&  \downarrow \\
M_{j-1} & \stackrel{\times \bar \ell_i}{\longrightarrow} & M_j \\
 \phantom{{\scriptstyle \times L}} \downarrow {\scriptstyle \times L} && \phantom{{\scriptstyle \times L}} \downarrow {\scriptstyle \times L} \\
M_j & \stackrel{\times \bar \ell_i}{\longrightarrow} & M_{j+1} \\
\downarrow && \downarrow \\
N_j & \stackrel{\times  0}{\longrightarrow} &  N_{j+1} \\
\downarrow &&  \downarrow \\
0 &&  0 \\
\end{array}
\]
The second assertion follows from a diagram chase, since a general $\bar \ell$ will be a general element of the pencil spanned by $L$ and $\bar \ell$, so it will be a non-zerodivisor for $M$, since $L$ is.
\end{proof}

\begin{cor} \label{module cor}
In the setting of Lemma \ref{images}, suppose that $L$ and $L'$ are general linear forms, and let $\ell_1,\dots,\ell_{r-2}$ and $\ell'_1,\dots,\ell'_{r-2}$ be  corresponding linear forms in $R/L$ and $R/L'$ respectively that annihilate the components $N_i$ and $N'_i = [M / L' M]_i$, respectively, where $i = 0,1$.  Let $\bar \ell_i$ and $\bar \ell'_i$ be general liftings.  If $M$ has no minimal generator in degree 1, and $\dim M_1 > 2 \cdot \dim M_0$ then $L, L', \bar \ell_1,\dots,\bar \ell_{r-2}, \bar \ell'_1, \dots, \bar \ell'_{r-2}$  define a point in $\mathbb P^r$.
\end{cor}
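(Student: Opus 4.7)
The plan is to prove that the $2r-2$ forms span a subspace of $R_1$ of dimension exactly $r$, so that their scheme-theoretic common zero locus in $\mathbb P^r$ is a single reduced point.  Set $U := \langle L, \bar\ell_1, \dots, \bar\ell_{r-2}\rangle$ and $U' := \langle L', \bar\ell'_1, \dots, \bar\ell'_{r-2}\rangle$ inside $R_1$; each has dimension $r-1$, since the $\ell_i$'s (resp.\ $\ell'_i$'s) are linearly independent modulo $L$ (resp.\ $L'$).  The conclusion amounts to showing $\dim(U \cap U') = r-2$, equivalently $\dim(U+U')=r$.

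First, I would identify $U$ with $\{f \in R_1 : fM_0 \subseteq LM_0\}$: the inclusion $\subseteq$ is immediate from Lemma \ref{images}, and equality follows by dimension count using that the $\ell_i$ exhaust the annihilator of $N_0$ in $(R/L)_1$.  Note that since $M$ has no minimal generator in degree $1$ (so $M_1=R_1 M_0$), the condition $fM_0 \subseteq LM_0$ automatically implies $fM_1 = f(R_1 M_0) = R_1\cdot fM_0 \subseteq R_1\cdot LM_0 = LM_1$; thus annihilating $N_0$ is equivalent to annihilating both $N_0$ and $N_1$, which is what makes the identification clean.  The analogous description holds for $U'$.  Next, the generality of $L, L'$ together with the hypothesis $\dim M_1 > 2 \dim M_0$ forces the two $(\dim M_0)$-dimensional subspaces $LM_0$ and $L'M_0$ of $M_1$ to be in generic position, hence $LM_0 \cap L'M_0 = 0$.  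Combining this with the previous identification yields $U\cap U' = \{f : fM_0 = 0\} = \mathrm{Ann}_{R_1}(M_0)$.  (The reverse inclusion $\mathrm{Ann}_{R_1}(M_0)\subseteq U\cap U'$ uses the same observation: if $fM_0=0$ then $\bar f$ annihilates both $N_0$ and $N_1$, so $f\in U$ and, symmetrically, $f\in U'$.)

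The core step is then to show $\dim\mathrm{Ann}_{R_1}(M_0)=r-2$.  The upper bound is immediate: $L\in U\setminus\mathrm{Ann}_{R_1}(M_0)$ and $\mathrm{Ann}_{R_1}(M_0)\subseteq U$, so $\dim\mathrm{Ann}_{R_1}(M_0)\le\dim U-1=r-2$.  For the lower bound, I would exploit the stronger conclusion of Lemma \ref{images} that a general lift satisfies $\bar\ell_i M_0 = LM_0$.  Since $L$ is a nonzerodivisor on $M$, the map $L:M_0\to LM_0$ is an isomorphism, so each $\bar\ell_i$ determines a unique $K$-linear operator $\phi_i\in\mathrm{End}_K(M_0)$ via $\bar\ell_i m = L\phi_i(m)$.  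The additional relation $\bar\ell_i M_1\subseteq LM_1$ combined with $M_1 = R_1 M_0$ promotes $\phi_i$ to an $R$-linear endomorphism $\Phi_i$ of degree zero on the submodule $RM_0\subseteq M$.  In the paper's intended application (where $M$ is a shifted canonical module of $R/I_Z$ for a reduced finite set of points), the degree-zero part of $\mathrm{End}_R(M)$ coincides with $(R/I_Z)_0=K$, so each $\Phi_i$ is a scalar $c_i\cdot\mathrm{id}$ and hence $\bar\ell_i-c_iL\in\mathrm{Ann}_{R_1}(M_0)$, producing $r-2$ linearly independent elements.  Combining the two bounds, $\dim\mathrm{Ann}_{R_1}(M_0)=r-2$, so $\dim(U+U')=r$ and the $2r-2$ given forms define a unique point of $\mathbb P^r$.

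The main obstacle in this plan is the lower bound on $\dim\mathrm{Ann}_{R_1}(M_0)$: the upper bound is routine, but the lower bound requires translating the degree-$1$ annihilator information provided by Lemma \ref{images} into honest annihilators of $M_0$ inside $R_1$.  This passes through the module-theoretic input that every degree-$0$ $R$-endomorphism of $M$ is a scalar, which in the abstract is a genuine structural hypothesis and in the application is supplied by the rank-one nature of the canonical module of a reduced Cohen--Macaulay ring.
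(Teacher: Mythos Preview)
Your overall strategy is sound through the reduction to $\dim(U+U')=r$ and the identification $U\subseteq\{f:fM_0\subseteq LM_0\}$, but the argument derails at the lower bound for $\dim\mathrm{Ann}_{R_1}(M_0)$.  That step is where you invoke $\mathrm{End}_R(M)_0=K$, and you correctly flag this as not a hypothesis of the corollary.  This is a genuine gap: the corollary is stated for an arbitrary graded module of depth $\ge 1$ satisfying the listed conditions, and the endomorphism constraint is extraneous.  Without it your construction only shows that each $\phi_i$ is a $K$-linear (indeed $R$-compatible in low degrees) endomorphism of $M_0$, not a scalar, so you cannot conclude $\bar\ell_i-c_iL\in\mathrm{Ann}_{R_1}(M_0)$.

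The paper avoids this entirely by proving $\dim(U+U')\le r$ via a one-line contradiction that uses only the stated hypotheses.  Suppose $U+U'=R_1$.  Since every element of $U$ sends $M_0$ into $LM_0$ (Lemma~\ref{images}) and every element of $U'$ sends $M_0$ into $L'M_0$, we get
\[
M_1 \;=\; R_1M_0 \;=\; UM_0+U'M_0 \;\subseteq\; LM_0+L'M_0,
\]
the first equality coming from ``no minimal generator in degree $1$.''  Hence $\dim M_1\le 2\dim M_0$, contradicting the hypothesis.  The other inequality $\dim(U+U')\ge r$ follows, as you note, simply from $L'\notin U$ (generality of $L'$).  So your entire detour through $\mathrm{Ann}_{R_1}(M_0)$, the transversality $LM_0\cap L'M_0=0$, and the endomorphism ring is unnecessary: the two ingredients you already have ($\bar\ell_iM_0\subseteq LM_0$ and $M_1=R_1M_0$) give the contradiction directly.
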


\begin{proof}
We have seen that $\ell_1,\dots,\ell_{r-2}$ are independent in $[S]_1$, so $L, \bar \ell_1,\dots,\bar \ell_{r-2}$ already define a line in $\mathbb P^r$, as do $L', \bar \ell'_1,\dots,\bar \ell'_{r-2}$.  Since $L'$ is general with respect to $L$ and all the $\ell_i$, the linear forms define at most a point.
If the statement is not true, then together all these linear forms span $[R]_1$.  Let us compute the dimension of the image of the multiplication
\[
\phi : [R]_1 \otimes M_0 \rightarrow M_1.
\]
By Lemma \ref{images}, the image of $\times L : M_0 \rightarrow M_1$ contains the image of $\times \bar \ell_i : M_0 \rightarrow M_1$ for each $i$, and similarly the image of  $\times L' : M_0 \rightarrow M_1$ contains the image of $\times \bar \ell'_i : M_0 \rightarrow M_1$ for each $i$.  Hence if $m_1,\dots, m_s$ form a basis of $M_0$, then $Lm_1,\dots, Lm_s, L'm_1, \dots, L' m_s$ are a basis for the image of $\phi$, so the image has dimension $\leq 2 \cdot \dim M_0$.  Since $M$ has no minimal generator in degree 1, we have a contradiction.  The result follows immediately.
\end{proof}

Now let $M$ be the canonical module of $R/I_Z$, suitable shifted so that it begins in degree~0. Note that $N$ is not the canonical module of $S/J$, but it is a twist of this module.  We have $\dim M_0 = k-1$ and $\dim M_1 = 2k-1$. We know that $M$ has no minimal generator in degree 1 by (\ref{assume6}).  Thanks to (\ref{assume9}), all the assumptions of Corollary \ref{module cor} are satisfied.  

Now let us interpret this geometrically.  The general linear form $L$ defines a general hyperplane $H_L$ and contains no point of $Z$.  The linear forms $0 \neq \ell_i \in S$ ($1 \leq i \leq r-2$) together define a line $\lambda_\ell$ in $H_L \subset \mathbb P^r$.  Similarly we have a general hyperplane $H_{L'}$ and a line $\lambda_{\ell '} \subset H_{L'} \subset \mathbb P^r$.  The meaning of Corollary \ref{module cor} is that $\lambda_\ell$ and $\lambda_{\ell'}$ meet in a point.  Hence they span a plane $\Lambda$.  

Recall the ``Linear Lemma" of \cite{CC}:

\begin{lem}
Any set of $m$-planes such that any two of them meet in an $(m-1)$-plane, either is contained in some fixed $\mathbb P^{m+1}$ or has an $(m-1)$-plane for base locus.
\end{lem}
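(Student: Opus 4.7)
The plan is to fix two $m$-planes $\pi_1,\pi_2$ in the family, let $\Pi=\pi_1\cap\pi_2$ be the $(m-1)$-plane they cut out, and let $V=\langle\pi_1,\pi_2\rangle$ be the $(m+1)$-plane they span. The whole argument will then reduce to a trichotomy for any third member $\pi$ of the family, based on the position of $\pi$ relative to $V$.

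First I would show the key dichotomy: for every $\pi$ in the family, either $\pi\subset V$ or $\Pi\subset\pi$. To see this, suppose $\pi\not\subset V$. The incidence hypothesis gives $\dim(\pi\cap\pi_i)=m-1$ for $i=1,2$, so $\pi\cap V$ has dimension at least $m-1$; but equality $\pi\cap V=\pi$ is ruled out by $\pi\not\subset V$, so $\pi\cap V$ is itself an $(m-1)$-plane. Since $\pi\cap\pi_1$ and $\pi\cap\pi_2$ are $(m-1)$-planes contained in $\pi\cap V$, they both equal $\pi\cap V$, and hence they coincide and lie in $\pi_1\cap\pi_2=\Pi$; comparing dimensions, $\pi\cap V=\Pi$, which gives $\Pi\subset\pi$.

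Next I would split into the two alternatives stated in the lemma. If every member of the family lies in $V$, then the family is contained in the fixed $\mathbb P^{m+1}=V$, and we are done. Otherwise there exists some $\pi^\star$ in the family with $\pi^\star\not\subset V$; by the previous step $\Pi\subset\pi^\star$. I claim that then $\Pi$ is contained in every member of the family, which gives the $(m-1)$-plane base locus. For $\pi$ with $\pi\not\subset V$ this is immediate from the dichotomy. For $\pi\subset V$ I use $\pi^\star$ as a witness: $\pi\cap\pi^\star\subset V\cap\pi^\star=\Pi$, and since $\pi\cap\pi^\star$ has dimension $m-1$ (by hypothesis) and $\Pi$ also has dimension $m-1$, one forces $\pi\cap\pi^\star=\Pi$, so $\Pi\subset\pi$.

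I do not anticipate a real obstacle beyond the dimension bookkeeping in the dichotomy step; the only subtlety is making sure that $\pi\cap V$ is exactly $(m-1)$-dimensional (neither less, which would contradict $\pi\cap\pi_1\subset\pi\cap V$, nor equal to $m$, which would contradict $\pi\not\subset V$). Once this is in hand, the identification $\pi\cap V=\Pi$ and the final case analysis are purely linear algebraic. The proof works uniformly over an arbitrary (algebraically closed) field and does not require the family to be finite.
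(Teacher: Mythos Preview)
Your argument is correct. The dichotomy step (for any $\pi$ not in $V=\langle\pi_1,\pi_2\rangle$, the intersection $\pi\cap V$ is forced to equal $\Pi=\pi_1\cap\pi_2$) is the heart of the matter, and your dimension count is clean: $\pi\cap\pi_1$ and $\pi\cap\pi_2$ are both $(m-1)$-planes inside the $(m-1)$-plane $\pi\cap V$, so they coincide and hence lie in $\Pi$. The endgame using a witness $\pi^\star\not\subset V$ to force $\Pi\subset\pi$ for the remaining members $\pi\subset V$ is also fine, since your dichotomy already gives $\pi^\star\cap V=\Pi$.

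As for comparison with the paper: there is nothing to compare. The paper does not prove this lemma; it merely quotes it as the ``Linear Lemma'' from \cite{CC} (Chiantini--Coppens) and uses it as a black box. Your self-contained linear-algebra proof is the standard one and would serve as a perfectly good substitute for the citation. One minor remark: you should note at the outset that if the family has fewer than two distinct members the conclusion is trivial, so that fixing distinct $\pi_1,\pi_2$ is legitimate.
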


In our case, taking $m=1$, we have that the lines $\lambda_\ell$ as $L$ ranges over $R_1$ either lie in a 2-plane or have a point as base locus.  But in the latter case, choosing a hyperplane that avoids this point leads to a contradiction.  Thus all the lines obtained in this way lie in the plane~$\Lambda$.  Notice that $\Lambda$ does not depend on the original choice of $L$, so we may assume that $L$ is general even with respect to $\Lambda$.

Let $H$ be a general element of $[I_\Lambda]_1$, and by abuse of notation we denote by $H$ also the hyperplane in $\mathbb P^r$ defined by this linear form.  Let 
\[
\begin{array}{rcl}
\displaystyle I_{Z_1} & = & (I_Z + I_\Lambda)^{sat} \\
\displaystyle I_{Z_1|\Lambda} & = & \left ( \frac{I_Z + I_\Lambda}{I_\Lambda} \right )^{sat} \subset R/I_\Lambda \\
\displaystyle I_{Z_1|H} & = & \left ( \frac{I_Z + (H)}{(H)} \right )^{sat}  \subset R/ \langle H \rangle \\
\displaystyle I_{Z_2} & = & I_Z : H = I_Z : I_\Lambda.
\end{array}
\]
So $Z_1$ is the subset of $Z$ lying on $\Lambda$, and $Z_2$ is the subset lying off $\Lambda$. Let $L$ be a general linear form. Let $J_2 = \frac{I_{Z_2} + \langle L \rangle}{\langle L \rangle}$.  Let $h$ be the restriction of $H$ to $R/\langle L \rangle$.  We have the commutative diagram
\[
\begin{array}{ccccccccccccccc}
&&&& 0 && 0 \\
&&&& \downarrow && \downarrow \\
0 & \rightarrow & \left [ \frac{I_Z : H}{I_Z} \right ]_{n-1} & \rightarrow & [R/I_Z]_{n-1} & \stackrel{\times H}{\longrightarrow} & [R/I_Z]_n & \rightarrow & [R/(I_Z,H)]_n & \rightarrow & 0 \\
&&   &&\phantom{\scriptstyle \times L} \downarrow {\scriptstyle \times L} &&\phantom{\scriptstyle \times L} \downarrow {\scriptstyle \times L} && \\
0 & \rightarrow & \left [ \frac{I_Z : H}{I_Z} \right ]_{n} & \rightarrow & [R/I_Z]_{n} & \stackrel{\times H}{\longrightarrow} & [R/I_Z]_{n+1} & \rightarrow & [R/(I_Z,H)]_{n+1} & \rightarrow & 0 \\
&&  &&  \downarrow && \downarrow &&   \\
0 & \rightarrow & \left [ \frac{J:h}{J} \right ]_n & \rightarrow & [S/J]_n & \stackrel{\times 0}{\longrightarrow} & [S/J]_{n+1} & \rightarrow & [R/(I_Z,H,h)]_{n+1} \\
&&&& \downarrow && \downarrow \\
&&&& 0 && 0
\end{array}
\]
where the indicated zero map is multiplication by $h$.  We will also shortly consider this diagram in degree one less.  We obtain (again) $[J:h]_n = [S]_n$ and $[R/(I_Z,H,h)]_{n+1} = [S/J]_{n+1}$. 

Now, the image of the middle map $\times H$ in this diagram is $[R/I_{Z_2}]_n$, and {the} commutativity of this diagram means that reduction of $[R/I_{Z_2}]_n$ modulo $L$ is zero.  As indicated, everything continues to work in degree one less.  Thus we obtain
\begin{equation} \label{Z2}
\Delta h_{R/I_{Z_2}}(n-1) = \Delta h_{R/I_{Z_2}}(n) = 0.
\end{equation}
Now consider the diagram
\[
\begin{array}{cccccccccccc}
&& 0 && 0  \\
&& \downarrow && \downarrow \\
0 & \rightarrow & [R/I_{Z_2}]_{n-1} & \stackrel{\times H}{\longrightarrow} & [R/I_Z]_{n} & \rightarrow & [R/(I_Z,H)]_{n} & \rightarrow & 0 \\
&&  \phantom{\scriptstyle \times L} \downarrow {\scriptstyle \times L} && \phantom{\scriptstyle \times L} \downarrow {\scriptstyle \times L} && \phantom{\scriptstyle \times L} \downarrow {\scriptstyle \times L} \\
0 & \rightarrow & [R/I_{Z_2}]_{n} & \stackrel{\times H}{\longrightarrow} & [R/I_Z]_{n+1} & \rightarrow & [R/(I_Z,H)]_{n+1} & \rightarrow & 0 \\
&& \downarrow && \downarrow && \downarrow   \\
&&[S/J_2]_n && [S/J]_{n+1} && [R/(I_Z,H,L]_{n+1}  \\
&& \downarrow && \downarrow && \downarrow \\
&& 0 && 0 && 0
\end{array}
\]
We obtain:
\begin{equation}\label{induced}
\hbox{\it the induced map $[S/J_2]_n \rightarrow [S/J]_{n+1}$ is the zero map.}
\end{equation}  

Applying the Snake Lemma and looking at the rightmost column, we thus get
\[
0 \rightarrow [S/J_2]_n \rightarrow [R/(I_Z,H)]_n \stackrel{\times L}{\longrightarrow} [R/(I_Z,H)]_{n+1} \rightarrow [S/J]_{n+1} \rightarrow 0
\]
from which we conclude
\begin{equation} \label{formula}
\Delta h_{R/I_Z}(n+1) = \Delta h_{R/(I_Z,H)} (n+1) + \Delta h_{R/I_{Z_2}}(n).
\end{equation}
The same holds in degree one less thanks to our observations above.  Combining with (\ref{Z2}) we have
\begin{equation} \label{Z1}
\Delta h_{R/I_Z}(n) = \Delta h_{R/(I_Z,H)} (n) \hbox{ \ \ and \ \ } \Delta h_{R/I_Z}(n+1) = \Delta h_{R/(I_Z,H)} (n+1).
\end{equation}
Now consider the exact sequences
\[
0 \rightarrow I_{Z_2}(-1) \stackrel{\times H}{\longrightarrow} I_Z \rightarrow \frac{I_Z + \langle H \rangle }{\langle H \rangle} \rightarrow 0  \hbox{ \ \ \ and \ \ \ }  0 \rightarrow \mathcal I_{Z_2}(-1) \stackrel{\times H}{\longrightarrow} \mathcal I_Z \rightarrow \mathcal I_{Z_1 | H} \rightarrow 0.
\]
Equation (\ref{Z2}) implies $h^1(\mathcal I_{Z_2}(n-2)) = 0$.  Thus $\frac{I_Z + \langle H \rangle}{\langle H \rangle}$ is saturated in degrees $\geq n-1$.  Since the Hilbert function of $Z_1$ is the same whether viewed as a subscheme of $\mathbb P^r$ or as a subscheme of $H$ or as a subscheme of $\Lambda$, we obtain
\begin{equation} \label{main eqn}
\Delta h_{R/I_Z}(t) = \Delta h_{R/I_{Z_1}} (t) \hbox{ \ \ \ for all } t \geq n.
\end{equation}
This concludes the proof of Theorem \ref{no base locus}.

\begin{cor}
In the setting of Theorem \ref{no base locus}, at least
\[
\binom{k+1}{2} + (k+1)(n-k+2) -3
\]
points of $Z$ must lie on a plane.
\end{cor}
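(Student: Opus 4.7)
The plan is to bound $|Z_1|$ below via an $h$-vector analysis of $Z_1 = Z \cap \Lambda$, the distinguished subset furnished by Theorem \ref{no base locus}. From that theorem, the first difference of the Hilbert function of $Z_1$ agrees with that of $Z$ in degrees $\geq n$, so $\Delta h_{Z_1}(n) = k$, $\Delta h_{Z_1}(n+1) = k-1$, and $\Delta h_{Z_1}(t) = 0$ for $t \geq n+2$. Because $Z_1$ lies in the plane $\Lambda$, its Hilbert function is that of $Z_1$ viewed inside $\Lambda \cong \mathbb{P}^2$, and hence its $h$-vector is a legitimate $h$-vector of a reduced $0$-dimensional subscheme of $\mathbb{P}^2$: it begins $(1, h_1, \dots)$ with $h_1 \leq 2$, is non-decreasing until its maximum, and is non-increasing afterwards.

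The decisive refinement comes from the basepoint-free hypothesis (\ref{assume4}). Since $[J]_n$ has empty base locus, the base locus of $[I_Z]_n \subset \mathbb{P}^r$ must be zero-dimensional (a positive-dimensional component would survive intersection with a generic hyperplane $L=0$), and through the inclusion $[I_Z]_n \subset [I_{Z_1}]_n$ the same holds for $[I_{Z_1}]_n$. Restricting to $\Lambda$ shows that $[I_{Z_1 \mid \Lambda}]_n$ has no common factor as a $\mathbb{P}^2$-ideal. By Davis's structure theorem, a plateau $\Delta h_{Z_1}(j) = \Delta h_{Z_1}(j+1) = d$ at a value $d \leq j+1$ that persists through degree $n$ forces a common factor of degree $d$ in $[I_{Z_1 \mid \Lambda}]_n$. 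In particular the $h$-vector cannot plateau at any value $\leq k$ up to degree $n$, so its maximum must be at least $k+1$.

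Among all $h$-vectors satisfying these constraints, the one with the smallest entry sum is
\[
(1, 2, 3, \ldots, k, \underbrace{k+1, k+1, \ldots, k+1}_{n-k \text{ entries}}, k, k-1),
\]
with the plateau at the maximum value $k+1$ spanning degrees $k$ through $n-1$. Summing gives
\[
\binom{k+1}{2} + (k+1)(n-k) + k + (k-1) \;=\; \binom{k+1}{2} + (k+1)(n-k+2) - 3,
\]
which is the claimed lower bound. The delicate point is showing that the naively smaller candidate $(1,2,\ldots,k,k,\ldots,k,k-1)$ is indeed excluded: one must verify that a plateau of $\Delta h_{Z_1}$ at level $k$ really produces a degree-$k$ curve in the base locus of $[I_Z]_n$ inside $\mathbb{P}^r$ (not merely in the base locus of the $\mathbb{P}^2$-ideal $[I_{Z_1 \mid \Lambda}]_n$), so that (\ref{assume4}) is genuinely contradicted.
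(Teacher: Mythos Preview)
Your argument is correct and follows essentially the same route as the paper's own proof: minimize the $h$-vector of $Z_1 \subset \Lambda \cong \mathbb P^2$ subject to the constraint that $[I_{Z_1\mid\Lambda}]_n$ has no common factor, then sum. The paper's proof is extremely terse --- it simply asserts that the $h$-vector $(1,2,\dots,k,k,\dots,k,k-1)$ is excluded ``since in this case the component in degree $n$ is still a curve'' and that the next-smallest candidate is $(1,2,\dots,k+1,k+1,\dots,k+1,k,k-1)$ --- whereas you supply the justification via the inclusion chain $[I_Z]_n \subset [I_{Z_1}]_n$ and Davis's theorem.

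Your closing ``delicate point'' is not actually a gap: you already handled it. The chain you set up runs in the direction
\[
\hbox{common factor in } [I_{Z_1\mid\Lambda}]_n \;\Longrightarrow\; \hbox{curve in base locus of } [I_{Z_1}]_n \;\Longrightarrow\; \hbox{curve in base locus of } [I_Z]_n \;\Longrightarrow\; [J]_n \hbox{ not basepoint free},
\]
where the middle implication uses $[I_Z]_n \subset [I_{Z_1}]_n$ (so the base loci satisfy the reverse inclusion). Thus the plateau at level $k$ does force a curve in the base locus of $[I_Z]_n$, exactly as needed to contradict (\ref{assume4}). One small sharpening worth making explicit: the exclusion argument rules out not only the $h$-vector with maximum $k$, but any $h$-vector with $\Delta h_{Z_1}(n-1)=k$; since the $h$-vector of points in $\mathbb P^2$ is non-increasing once it drops below the diagonal, this forces $\Delta h_{Z_1}(j)\geq k+1$ for all $k\leq j\leq n-1$, which is precisely what pins down your minimal candidate.
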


\begin{proof}
The approach is almost identical to the proof of Corollary \ref{count pts sect 4}, but now there is a slight twist.  Since $[J]_n$ is basepoint free, the $h$-vector cannot be
\[
(1,2,\dots,k-1,k,k,\dots,k,k-1)
\]
since in this case the component in degree $n$ is still a curve.  Thus the smallest possible $h$-vector is
\[
(1,2,\dots,k-1,k,k+1, k+1,\dots,k+1,k,k-1)
\]
and this gives the desired bound.

\end{proof}


\section{Extending Gotzmann's theorem} \label{gotzmann section}

We have noted that in \cite{BGM}, use was made of Gotzmann's Persistence Theorem, which described the behavior of the Hilbert function of $Z_1$, the subset of $Z$ lying on the base locus (under the assumption of maximal growth), assuming that no additional generators were present in its ideal.  Although it is not the focus of this paper, it is still of interest to know what the behavior of the Hilbert function can be when we have almost maximal growth and no additional generators.  To illustrate that something can be said, we consider  the situation of Theorem \ref{no base locus}.    

Note that even if $Z \subset \mathbb P^r$, its artinian reduction agrees in degrees $\geq n$ with the artinian reduction of $Z_1$, the subset of $Z$ lying on the plane $\Lambda$.  Thus without loss of generality we can assume that our almost maximal growth arises in the setting of an algebra $S/J$ where $S = k[x,y]$.

\begin{prop}
Let $S = k[x,y]$,  and let $J$ be a homogeneous ideal in $S$ such that for some integer $n$ the following hold:
\begin{itemize}
\item[(a)] The linear system defined by $[J]_n$ has no base locus;
\item[(b)] $h_{S/J}(n+1) = h_{S/J}(n) - 1$;
\item[(c)] $J$ has no minimal generators in degree $> n$.

\end{itemize}

\noindent Then for all $j \geq n$ we have $h_{S/J}(j+1) = \max \{ h_{S/J}(j) -1, 0 \}$.
\end{prop}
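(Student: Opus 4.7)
My plan is to prove $h(j+1)=\max(h(j)-1,0)$ for all $j\geq n$ by analyzing the structure of the graded module $K:=(J:L)/J$ for a linear form $L$, exploiting that $S=k[x,y]$ has only two variables. First, because $[J]_n$ is basepoint free, for every linear form $L$ there exists a form in $[J]_n$ not vanishing on $V(L)$, so $[S/(J,L)]_n=0$; combined with hypothesis (c) this propagates to $[S/(J,L)]_t=0$ for all $t\geq n$, which is equivalent to saying that $\times L\colon [S/J]_{t-1}\to[S/J]_t$ is surjective for all $t\geq n$. Consequently $\dim K_s = h(s)-h(s+1)$ whenever $s\geq n-1$, and hypothesis (b) says $\dim K_n = 1$. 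Since $LK=0$, choosing a second linear form $M$ independent of $L$ lets me regard $K$ as a graded $k[M]$-module.

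The heart of the argument is the identification
\[
K_s \;=\; M^{\,s-n+1}\,K_{n-1}\qquad \text{for every } s\geq n.
\]
One inclusion is automatic since $M$ preserves $(J:L)$ modulo $J$. For the other, take $\bar f\in K_s$ lifted to $f\in (J:L)_s$. Condition (c) gives $Lf\in J_{s+1}=J_n\cdot S_{s+1-n}$, so $Lf=\sum_i g_i h_i$ with $g_i\in J_n$ and $h_i\in S_{s+1-n}$. Using the direct-sum decomposition $S_{s+1-n}=L\cdot S_{s-n}\oplus k\cdot M^{s+1-n}$ (available because $S\cong k[L,M]$), write $h_i=L h_i'+c_i M^{s+1-n}$ and set $G:=\sum_i c_i g_i\in J_n$; the identity rearranges to
\[
L\bigl(f-\textstyle\sum_i g_i h_i'\bigr)\;=\;M^{s+1-n}\,G.
\]
If $G=0$ then $f\in J$, contradicting $\bar f\neq 0$; otherwise $\gcd(L,M^{s+1-n})=1$ in the UFD $S$ forces $L\mid G$, say $G=LG_1$ with $G_1\in (J:L)_{n-1}$. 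Cancelling $L$ and passing to $K$ gives $\bar f = M^{s+1-n}\,\bar G_1\in M^{s-n+1}K_{n-1}$, completing the identification.

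With this in hand, the conclusion follows almost immediately. Multiplication by $M$ gives $\dim K_{s+1}=\dim MK_s\leq \dim K_s$ for every $s\geq n$; combined with $\dim K_n=1$, this yields $\dim K_s\in\{0,1\}$ for all $s\geq n$. Moreover $K_{s_0}=0$ at any $s_0\geq n$ forces $K_s=M^{s-s_0}K_{s_0}=0$ for every $s\geq s_0$, so $h$ stabilizes at $h(s_0)$. Since $R=S/J$ is artinian, $h$ must eventually reach $0$, forcing the stabilizing value to be $0$. Hence $\dim K_s=1$ for every $s\in[n,n+k-1]$, where $k:=h(n)$, and $\dim K_s=0$ for $s\geq n+k$, which is exactly the stated behavior $h(j+1)=\max(h(j)-1,0)$.

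The main obstacle I anticipate is the divisibility step $L\mid G$ inside the identification: it rests on $S$ being a UFD in only two variables, through the decomposition $S_d=L\cdot S_{d-1}\oplus k\cdot M^d$, and is precisely where the proposition becomes genuinely planar. The rest of the proof is essentially bookkeeping with the $k[M]$-module $K$ and an appeal to artinianness.
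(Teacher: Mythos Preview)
Your proof is correct, but it takes a genuinely different route from the paper's argument.

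The paper argues by liaison: since $[J]_n$ is basepoint free it contains a regular sequence $(F,G)$ of type $(n,n)$, and the complete intersection $\mathfrak{c}=(F,G)$ links $J$ to an artinian ideal $J'$. If the conclusion failed, there would be some $k\geq n+1$ with $h_{S/J}(k)=h_{S/J}(k-1)-1$ and $h_{S/J}(k+1)\leq h_{S/J}(k)-2$; under the liaison duality this produces two equal consecutive values in $h_{S/J'}$ preceded by a strictly larger one, which (citing a result of Cho--Iarrobino / GHMS) forces a nonzero socle element of $S/J'$ in a specific degree. Dualizing back, that socle element becomes a minimal generator of $J$ in degree $k+1\geq n+2$, contradicting~(c).

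Your argument is entirely self-contained and avoids liaison: you work directly with the $S/(L)\cong k[M]$-module $K=(J:L)/J$ and establish the structural identity $K_s=M^{s-n+1}K_{n-1}$ for $s\geq n$ by a clean divisibility argument exploiting the two-variable decomposition $S_d=L\cdot S_{d-1}\oplus k\cdot M^d$. From this, $\dim K_s$ is nonincreasing for $s\geq n-1$, so once $\dim K_n=1$ all later values lie in $\{0,1\}$, and artinianness of $S/J$ (which indeed follows from (a), since a basepoint-free pencil in $\mathbb{P}^1$ contains a regular sequence) forces the steady drop to zero. Your approach is more elementary and makes transparent exactly where the two-variable hypothesis enters (the UFD step $L\mid G$), at the cost of a slightly longer computation; the paper's approach is shorter but imports the socle criterion from the literature and the machinery of linkage.
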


\begin{proof}
The minimal free resolution of $J$ has the form
\[
0 \rightarrow \mathbb F_2 \rightarrow \mathbb F_1 \rightarrow J \rightarrow 0
\]
where all summands $S(-j)$ of $\mathbb F_1$ satisfy $j \leq n$.  Condition (a) guarantees that $J$ contains a regular sequence of two forms of degree $n$.  

Now suppose that the assertion is not true.  Then there exists some $k \geq n+1$ for which 
\[
h_{S/J}(k) = h_{S/J}(k-1) - 1 \hbox{ and } h_{S/J}(k+1) \leq h_{S/J}(k) -2.
\]
Let $\mathfrak c$ be a complete intersection of type $(n,n)$ in $I$.  It links $J$ to a homogeneous artinian ideal $J' \subset S$.  We have 
\[
h_{S/J'}(2n-k-3) > h_{S/J'}(2n-k-2) = h_{S/J'}(2n-k-1) .
\]
Since the latter two represent maximal growth (or possibly 0), a result of \cite{CI} (see also \cite{GHMS} Theorem 3.4) gives that $S/J'$ has a non-zero socle element in degree $2n-k-3$.  Hence the minimal free resolution of $S/J'$ has a free summand $S(-2n+k+1)$ in the last free module.  By linkage, this means that $J$ has a minimal generator in degree $k+1 \geq n+2$, contradicting~(c).
\end{proof}

\section{Further questions}

Finally, we present some open problems which we leave for future study.  

\begin{enumerate}

\item When the base locus is of dimension $\geq 2$, can we obtain a good bound on the number of points of $Z$ lying on the base locus?

\item Are there other results extending Gotzmann's theorem besides the one given in section \ref{gotzmann section}?

\item What happens if we do not assume that $Z$ is reduced?  Can we still obtain similar results?  In this case, $Z_1$ becomes the subscheme of $Z$ lying on the base locus, which may or may not be reduced.

\item Can we obtain similar results for $Z$ of higher dimension?

\item  What happens when $Z$ is a reduced set of points in uniform position?  In \cite{BGM} several nice consequences were obtained.  Perhaps similar results can be found here.  In particular, must the base locus be irreducible and must all the points of $Z$ lie on it?

\item We believe that Theorem \ref{n+1 choose n} and Theorem \ref{other AMG} can be extended to ``maximal growth minus 2,'' and indeed to ``maximal growth minus $\ell$" for $\ell \leq r-2$.  Specifically, there should be $\ell+1$ possibilities for the dimension of the base locus in general.

\item In \cite{CCDG} and \cite{CCi}, when the set $Z\subset \pp^3$ is the general 
hyperplane section of an irreducible curve in $\pp^4$, a result similar to Theorem 
\ref{no base locus} is obtained, with a different approach. More generally, when there is a 
large monodromy group acting on $Z$, the existence of a surface of degree 
$\ell$ containing $Z$ is proven when ``maximal growth minus $\ell$" holds, 
for some $\ell$. We wonder if the approach introduced above can extend the results
of \cite{CCDG} and \cite{CCi} to any sets of points in (very?) uniform position.

\item As we mentioned in the Introduction, we have in mind some applications of our results on almost maximal growth to the study of symmetric tensors. What other applications can we obtain from these results?  

\end{enumerate}


\begin{thebibliography}{99}

{
\bibitem{BB} E. Ballico and A. Bernardi, {\it A partial stratification of secant varieties of Veronese varieties via curvilinear subschemes}, Sarajevo J. Math. {\bf 8} (2012), 33--52.

\bibitem{BC} E. Ballico and L. Chiantini, {\it A criterion for detecting the identifiability of symmetric tensors of size three}, Diff. Geom. Applic. {\bf 30} (2012), 233--237.
}

\bibitem{bigatti} A. M. Bigatti, {\it Upper Bounds for the Betti Numbers of a Given Hilbert Function}, Comm. Algebra {\bf 21} (1993), 2317--2334.

\bibitem{BG} A.M.\ Bigatti and A.V.\ Geramita: {\it Level Algebras, Lex Segments and Minimal Hilbert Functions}, Comm.\ in Algebra {\bf 31} (2003), 1427-1451.

\bibitem{BGM} A.M. Bigatti, A.V. Geramita and J. Migliore, {\it Geometric consequences of extremal behavior in a theorem of Macaulay}, Trans. Amer. Math. Soc. {\bf 346} (1994), 203--235.

\bibitem{BH} W.\ Bruns and J.\ Herzog: {\it Cohen-Macaulay rings}, Cambridge studies in advanced mathematics, No.\ 39, Revised edition (1998), Cambridge, U.K.

{
\bibitem{BGL} J. Buczy\'{n}ski, A. Ginensky and J. M. Landsberg, {\it Determinantal equations for secant varieties and the Eisenbud-Koh-Stillman conjecture}, J. London Math. Soc. {\bf 88} (2013), 1--24.

\bibitem{CCG} L. Chiantini, N. Chiarli and S. Greco, {\it Halphen conditions and postulation of nodes}, Adv. Geom. {\bf 5} (2005), 237--264.

\bibitem{CCDG} L. Chiantini, C. Ciliberto and V. di Gennaro, {\it The genus of projective curves in $\pp^4$ verifying certain flag conditions}, Manuscr. Math. {\bf 88} (1995), 119--134.

\bibitem{CCi} L. Chiantini and C. Ciliberto, {\it Towards a Halphen theory of linear series on curves}, Trans. Amer. Math. Soc. {\bf 351} (1999), 2197--2212.
}

\bibitem{CC} L. Chiantini and M. Coppens, {\it Grassmannians of secant varieties}, Forum Math. {\bf 13} (2001), 615--628.

\bibitem{CI} Y. Cho and A. Iarrobino, {\it Hilbert functions of level algebras}, J. Algebra {\bf 241} (2001), 745--758.

\bibitem{cocoa}   CoCoATeam,
  CoCoA: a system for doing
     Computations in Commutative Algebra.
  Available at http://cocoa.dima.unige.it
  
\bibitem{davis} E.D.\ Davis,  {\it Complete Intersections of Codimension $2$  in ${\mathbb P}^{r}$: The Bezout-Jacobi-Segre Theorem Revisited},  Rend.\ Sem.\ Mat.\ Univers.\ Politecn.\ Torino,  Vol.\ 43, 2 (1985), 333--353.

\bibitem{GHMS} A. Geramita, T. Harima, J. Migliore and Y. Shin, ``The Hilbert function of a level algebra," Memoirs AMS vol. 186, no. 872 (2007).

\bibitem{GKR} A.V. Geramita, M. Kreuzer and L. Robbiano, {\it Cayley-Bacharach schemes and their canonical modules}, Trans. Amer. Math. Soc. {\bf 339} (1993), 163--189.

\bibitem{GMR} A.V. Geramita, P. Maroscia and L. Roberts, {\it The Hilbert function of a reduced $K$-algebra},  J. London Math. Soc. {\bf (2) 28} (1983), no. 3, 443--452.

\bibitem{Gotzmann} G. Gotzmann, {\it Eine Bedingung f\"ur die Flachheit und das Hilbertpolynom eines graduierten Ringes}, Math. Z. {\bf 158} (1978), 61--70.

\bibitem{Green} M. Green, {\it Restrictions of linear series to hyperplanes, and some results of Macaulay and Gotzmann}, Algebraic curves and projective geometry (1988), 76-86, Trento; Lecture Notes in Math. 1389 (1989), Springer, Berlin.

\bibitem{GLP} L. Gruson, R. Lazarsfeld and C. Peskine, {\it On a theorem of Castelnuovo and the equations defining space curves}, Invent. Math. {\bf 72} (1983), 491--506.

\bibitem{hulett} H. A. Hulett, {\it Maximum Betti Numbers of Homogeneous Ideals with a Given Hilbert Function},  Comm.  Algebra. {\bf 21} (1993), 2335--2350.

\bibitem{KMMNP} J.O. Kleppe, J. Migliore, R.M. Mir\'o-Roig,  U. Nagel and C. Peterson, ``Gorenstein liaison, complete intersection liaison invariants and unobstructedness,'' Mem. Amer. Math. Soc. 154 (2001), no. 732.

\bibitem{mac} F.S. Macaulay, {\it Some properties of enumeration in the theory of modular systems}, Proc. London Math. Soc. {\bf 26} (1927), 531--555.

\bibitem{mar} P. Maroscia, {\it Some problems and results on finite sets of points in $\mathbb P^n$}, Lecture Notes in Math., vol. 997, Algebraic Geometry -- Open Problems (Proceedings, Ravello 1982), Springer, 1983.

\bibitem{book} J. Migliore, ``Introduction to Liaison Theory and Deficiency Modules," Progress in Math., vol. 165, Birkh\"auser Boston, Massachusetts, 1998.

\bibitem{OR} F. Orecchia and I. Ramella, {\it The conductor of one-dimensional local rings with reduced associated graded ring}, Ric. Mat. {\bf XLIV} (1995), 231--236.


\bibitem{pardue} K. Pardue, {\it Deformation Classes of Graded Modules and Maximal Betti Numbers},  Illinois J. Math. {\bf 40} (1996), 564--585.

\end{thebibliography}
\end{document}